\newtheorem{theorem}{Theorem}[section]
\newtheorem{prop}[theorem]{Proposition}
\newtheorem{lemma}[theorem]{Lemma}
\newtheorem{corollary}[theorem]{Corollary}
\newtheorem{definition}[theorem]{Definition}
\def\R{{\mathbb R}} 
\def\Z{{\mathbb Z}} 
\def\N{{\mathbb N}} 
\def\Q{{\mathbb Q}} 
\title{Generalizing the Minkowski Question Mark Function to a Family of Multidimensional Continued Fractions}
\author{Thomas Garrity and Peter M. McDonald\\
Department of Mathematics\\
Williams College\\
Williamstown, MA 01267\\
tgarrity@williams.edu}
\begin{document}
\maketitle

\begin{abstract}
The Minkowski question mark function $?:[0,1]\rightarrow [0,1]$ is a continuous, strictly increasing, one-to-one and onto function that has derivative zero almost everywhere.
Key to these facts  are the basic properties of continued fractions.  Thus $?(x)$ is a naturally occurring number theoretic singular function.  This paper generalizes the question mark function to the $216$  triangle partition (TRIP) maps.  These are multidimensional continued fractions which generate a family of almost all known multidimensional continued fractions.  We show for each TRIP map that there is a natural candidate for its analog of the Minkowski question mark function.  We then show that the analog is singular for  $96$ of the TRIP maps and show that $60$ more are singular under an assumption of ergodicity.  
\end{abstract}

\section{Introduction}
The Minkowski question mark function $?:[0,1]\rightarrow [0,1]$ is a continuous, strictly increasing, one-to-one and onto function that has two key properties:
\begin{enumerate}
\item $?(x)$ sends the quadratic irrationals to the rationals.
\item $?(x)$ has derivative zero almost everywhere.
\end{enumerate}
Key to both of these  are the basic properties of continued fractions.  More precisely, it is critical that 
\begin{enumerate}
\item A real number has an eventually periodic continued fraction expansion precisely when it is a quadratic irrational.
\item The partial fractions of a real number's continued fraction expansion provide the best possible  Diophantine approximations to the initial real.
\end{enumerate}

Attempts to find higher dimensional analogs of continued fractions  are called multidimensional continued fraction algorithms, of which there are many.  They have applications in areas ranging from 
   simultaneous Diophantine approximation problems (see Lagarias  \cite{Lagarias93}) to attempts to understand algebraic numbers via periodicity conditions   \cite{GarrityT01} to automata theory (see Fogg \cite{Fogg}).   For background on multidimensional continued fractions, see Schweiger \cite{schweiger} and  Karpenkov \cite{Karpenkov}.

Recently many if not most known multidimensional continued fraction algorithms have been put into a common framework of a family of algorithms, which are called triangle partition maps (TRIP maps) \cite{tripmaps}.  It is natural to consider if there are analogs of the Minkowksi question mark function for each TRIP map, and indeed this is the goal of this paper.  We will see that for each TRIP map there is only one natural candidate for its version of the question mark function.   Among the $216$ TRIP maps, we will see that for $96$ of them, the corresponding function will be singular.  We will also see. subject to ergodicity assumption of the corresponding map, that an additional $60$ will have their corresponding functions be singular.  For the remaining maps, we do not know if the maps are singular.

In the next section, we review the well-known Minkowksi question mark function, mainly to see  how our candidates for analogs are indeed analogs. Then we review the three earlier attempts at such generalizations, in particular the  work of Panti \cite{panti} on the M\"{o}nkemeyer map.  (Note that the   M\"{o}nkemeyer is one of the TRIP maps, a fact we will use in secton \ref{Monkmayer}.)  In section \ref{TRIP}, we will review the basics of triangle partition maps.   For the expert, we will be using, overall,  an additive approach instead of a multiplicative approach (which was the approach in earlier papers).  The triangle partition maps come down to various ways of partitioning a triangle.  These partitionings, we will see, will give us information about the domains of our generalized Minkowski question mark functions.  In section \ref{bary}, we look at what we call barycentric paritionings of a triangle.  There will be a different one corresponding to each of the 216 triangle partition maps.  These barycentric partitionings are what will give us range information about the generalized Minkowski question mark functions. In section \ref{analog} we finally give the definition for our analogs of the question mark function.  We will see that not all of these analogs are actually functions but can instead, sometimes, be maps that take points in a triangle to line segments.  Thus in general, for some triangle partition maps, the analog is a relation. Most of the time, for most TRIP maps, the analog will be a relation, as we discuss in section \ref{convergence}.   Since the analogs to the Minkowski question mark might not be a function, we need to exercise a bit of care as to what we mean by the analog being singular, which we do in section \ref{definition}.  

We do not want to do the painful work of looking at each of the 216 analogs.  In section \ref{reduction}, we show how we can break the 216 different cases into 15 classes, meaning to show singularity of all elements in a class,  we just have to show singularity for a single element.  In section \ref{results}, we show that we do get singularness for five of these classes and show for an additional  five classes we  have singularness subject to an assumption of ergodicity.  We then conclude with a few of the many remaining open questions.

\section{Earlier Work}

\subsection{Minkowski Question Mark Function}

Lagarias said that the Minkowski question mark function is a type of mathematical pun.

To define the Minkowski question mark function, we first define the Farey addition of two rationals.  Given $\frac{p}{q},\frac{r}{s}\in\Q$ define the operation $\hat{+}$ by setting
\[\frac{p}{q}\hat{+}\frac{r}{s}=\frac{p+r}{r+s}\]
Thinking of the fractions $\frac{p}{q}$ and $\frac{r}{s}$ as the vectors $(q,p), (s,r)\in\Z^2$, we notice that this is simply vector addition.  When we take the Farey sum of two rational numbers, we are really adding them as vectors in $\Z^2$ and mapping  back into $\Q$.

Then we define the $n^{th}$ Farey set recursively, beginning  with $\mathcal{F}_0=\{\frac{0}{1},\frac{1}{1}\}$.  Then set
\[\mathcal{F}_1=\left\{\frac{0}{1},\frac{0}{1}\hat{+}\frac{1}{1},\frac{1}{1}\right\}=\left\{\frac{0}{1},\frac{1}{2},\frac{1}{1}\right\}\\\]
Following this pattern, we define $\mathcal{F}_n$ by taking the union of $\mathcal{F}_{n-1}$ and the Farey sums of adjacent elements of $\mathcal{F}_{n-1}$ and ordering them appropriately.  Then 
\begin{align*}
\mathcal{F}_0&=\left\{\frac{0}{1},\frac{1}{1}\right\}\\
\mathcal{F}_1&=\left\{\frac{0}{1},\frac{1}{2},\frac{1}{1}\right\}\\
\mathcal{F}_2&=\left\{\frac{0}{1},\frac{1}{3},\frac{1}{2},\frac{2}{3},\frac{1}{1}\right\}\\
\mathcal{F}_3&=\left\{\frac{0}{1},\frac{1}{4},\frac{1}{3},\frac{2}{5},\frac{1}{2},\frac{3}{5},\frac{2}{3},\frac{3}{4},\frac{1}{1}\right\}
\end{align*}
Similarly, we define the $n^{th}$ barycentric set, $\mathcal{B}_n$, by starting with $\mathcal{B}_0=\{0,1\}$ and creating $\mathcal{B}_n$ by taking the union of $\mathcal{B}_{n-1}$ and the averages of adjacent elements of $\mathcal{B}_{n-1}$ and ordering appropriately.  We present the first few iterations of this:
\begin{align*}
\mathcal{B}_0&=\left\{\frac{0}{1},\frac{1}{1}\right\}\\
\mathcal{B}_1&=\left\{\frac{0}{1},\frac{1}{2},\frac{1}{1}\right\}\\
\mathcal{B}_2&=\left\{\frac{0}{1},\frac{1}{4},\frac{1}{2},\frac{3}{4},\frac{1}{1}\right\}\\
\mathcal{B}_3&=\left\{\frac{0}{1},\frac{1}{8},\frac{1}{4},\frac{3}{8},\frac{1}{2},\frac{5}{8},\frac{3}{4},\frac{7}{8},\frac{1}{1}\right\}
\end{align*}
Notice that $\mathcal{F}_0=\mathcal{B}_0$ and $\mathcal{F}_1=\mathcal{B}_1$ but that after this point they diverge.  Now we are ready to define $?(x)$ as a limit of functions we will call $?_n(x).$

Define $?_0(x)=x$ on $[0,1]$.  This corresponds to identifying the first element of $\mathcal{F}_0$ with the first element of $\mathcal{B}_0$ and the second element of $\mathcal{F}_0$ with the second element of $\mathcal{B}_0$ and connecting these points with a straight line.  Then we can define $?_n(x)$ by $?_n(f_{i,n})=b_{i,n}$, where $f_{i,n}$ is the $i^{th}$ element of $\mathcal{F}_n$ and $b_{i,n}$ is the $i^{th}$ element of $\mathcal{B}_n$, and filling in the rest by linearity. This means each $?_n(x)$ will be a piecewise linear function.  The first few $?_i$ are shown below:

\begin{align*}
\begin{tikzpicture}
\draw (0,0)--(3,0);
\draw (0,0)--(0,3);
\draw (0,0)--(2.5,2.5);
\node at(1.5,-0.75) {$?_1(x)$};
\end{tikzpicture}&&&&
\begin{tikzpicture}
\draw (0,0)--(3,0);
\draw (0,0)--(0,3);
\draw (0,0)--(5/6,5/8);
\draw (5/6,5/8)--(5/4,5/4);
\draw (5/4,5/4)--(10/6,15/8);
\draw (10/6,15/8)--(2.5,2.5);
\node at(1.5,-0.75){$?_2(x)$};
\end{tikzpicture}&&&&
\begin{tikzpicture}
\draw (0,0)--(3,0);
\draw (0,0)--(0,3);
\draw (0,0)--(5/8,5/16);
\draw (5/8,5/16)--(5/6,5/8);
\draw (5/6,5/8)--(1,15/16);
\draw (1,15/16)--(5/4,5/4);
\draw (5/4,5/4)--(1.5,25/16);
\draw (1.5,25/16)--(10/6,15/8);
\draw (10/6,15/8)--(15/8,35/16);
\draw (15/8,35/16)--(2.5,2.5);
\node at(1.5,-0.75) {$?_3(x)$};
\end{tikzpicture}
 \end{align*}
Then we define
\[\lim_{n\to\infty}?_n(x)=?(x).\]
It can be shown that this convergence is uniform, and hence that $?(x)$ is continuous.  What is important is that it has derivative zero almost everywhere.
Salem \cite{salem} shows that the Question Mark Function is singular by showing that everywhere the derivative exists and is finite, the derivative is zero.  Since the derivative must exist and be finite almost everywhere, we have that $?(x)$ is singular.  A further discussion of alternate definitions and explorations of the singularity of $?(x)$ can also be found in \cite{viader}. In looking for higher dimensional singular functions we will distill Salem's argument to its essential ingredients by using the limit definition of $?(x)$ .  Each $?_n$ can be though of as a map from $I_{n,F}$, the unit interval partitioned according to $\mathcal{F}_n$, to $I_{n,B}$, the unit interval partitioned according to $\mathcal{B}_n$. Because $?_n$ is piecewise linear, the derivative at any point $\alpha\in(f_{j,n},f_{j+1,n})$ is simply 
\[\frac{b_{j+1,n}-b_{j,n}}{f_{j+1,n}-f_{j,n}}.\]  


As $n$ goes to infinity, this ratio goes to zero almost everywhere.  Thus the singularity of the Question Mark Function is due to the relationship between the Farey and barycentric partitions of the unit interval.  The Farey subdivision is intimately linked to the continued fraction algorithm, which gives the best Diophantine approximation of a real number.  Thus, it would make sense to look for generalizations of the Minkowski Question Mark Function by looking at multidimensional fraction algorithms on the triangle in $\R^2$ as a sort of Farey partitioning and developing a notion of barycentric partitioning associated to each one. 

\subsection{Panti's Work}
The first attempt at generalizing the Minkowski Question Mark Function was done in \cite{beaver} and follows a similar line of reasoning to explaining the singularity of $?(x)$.    That paper looked at only one type of multi-dimensional continued fraction, the Farey multi-dimensional continued fraction algorithm.  This work was extended by Marder \cite{marder}.
The third attempt at generalizing the Minkowski Function is presented  by Panti \cite{panti}.  Here the multi-dimensional continued fraction that is used is the  M\"{o}nkemeyer map.   In section 1 of that paper, Panti notes that the Question Mark Function conjugates the Farey map
\[F(x)=\begin{cases} \frac{x}{1-x}&\text{if } 0\leq x<1/2\\\frac{1-x}{x}&\text{if } 1/2\leq x\leq 1\end{cases}\]
with the tent map
\[T(x)=\begin{cases}2x&\text{if } 0\leq x<1/2\\2-2x&\text{if } 1/2\leq x\leq 1\end{cases}\]
and proves in Proposition 1.1 that this property characterizes $?(x)$.  Thus, Panti turns to the $n$-dimensional generalizations of these maps, known as the M\"{o}nkemeyer map ($M$) and the tent map ($T$) to find his generalization of $?(x)$.  (At this point the reader should note that Dasratha et al. showed in \cite{tripmaps} that the M\"{o}nkemeyer map in the 2-dimensional case corresponds to the TRIP map $T_{(e,132,23)}$, a fact that we will later be exploiting.)  Defining $\triangle$ as the $n$-dimensional simplex that reduces to our familiar triangle in 2-dimensions, he presents the following
\begin{theorem} There exists a unique homeomorphism $\Phi:\triangle\to\triangle$  such that $T=\Phi\circ M\circ\Phi^{-1}$.
\end{theorem}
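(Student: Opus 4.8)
The plan is to reproduce inside the simplex $\triangle$ the limiting construction of $?(x)$ recalled above, with the M\"{o}nkemeyer map $M$ playing the role of the Farey map and the tent map $T$ playing its own role. Both $M$ and $T$ are Markov maps: each is presented by finitely many branches, every branch carrying its piece of $\triangle$ onto all of $\triangle$ (projectively for $M$, affinely for $T$), and the incidence data of the branches --- which branch images meet which, and along what faces --- is the same for the two maps. This combinatorial coincidence, which Panti verifies by writing out the explicit formulas, is the analog of the statement that $F$ and $T$ are both full two-branch maps on $[0,1]$, and it says precisely that the itinerary spaces of $M$ and of $T$ are one and the same shift space $\Sigma$, with the same pairs of itineraries describing the same boundary points. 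We therefore get continuous surjections $\pi_M,\pi_T\colon\Sigma\to\triangle$, each injective off the union of the cylinder boundaries, and a consistent way to identify itineraries.

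First I would build $\Phi$ as a uniform limit of piecewise maps, exactly as $?$ is built from the $?_n$. For each $n$ the length-$n$ cylinders subdivide $\triangle$ in two ways, into $M$-cylinders (the analog of the Farey partition $\mathcal F_n$) and into $T$-cylinders (the analog of the barycentric partition $\mathcal B_n$). Let $\Phi_n\colon\triangle\to\triangle$ send each $M$-cylinder affinely onto the $T$-cylinder with the same label, matching vertices in the order prescribed by that label; this is the higher-dimensional $?_n$. The branches of $T$ are affine contractions, so the length-$n$ $T$-cylinders have diameters decaying geometrically in $n$; hence $\Phi_{n+1}$ and $\Phi_n$ differ by at most the largest such diameter, the sequence $\{\Phi_n\}$ is uniformly Cauchy, and $\Phi:=\lim_n\Phi_n$ exists and is continuous. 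Equivalently $\Phi(\pi_M(\omega))=\pi_T(\omega)$ for all $\omega\in\Sigma$, which is well defined because the two codings glue the same itineraries.

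Next I would check that $\Phi$ is a homeomorphism conjugating $M$ to $T$. Since every $\Phi_n$ respects the nesting of cylinders, $\Phi$ carries the $n$-th $M$-cylinder through a point into the $n$-th $T$-cylinder with the matching label; as the $T$-cylinders shrink to points, $\Phi$ separates distinct itineraries, and --- using that the $M$-cylinders also shrink to points --- $\Phi$ is injective. A continuous injection of the compact space $\triangle$ onto $\triangle$ is a homeomorphism. For the conjugacy it suffices to verify $\Phi\circ M=T\circ\Phi$ on the dense set $\bigcup_n M^{-n}(v)$, where $v$ is the vertex of $\triangle$ fixed by $M$: for such a point the $M$-itinerary of $Mx$ is the shift of that of $x$, so $\Phi(Mx)$ and $T(\Phi x)$ carry the same $T$-itinerary and hence coincide; continuity then gives $T=\Phi\circ M\circ\Phi^{-1}$ everywhere.

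Finally, uniqueness comes down to showing that the only homeomorphism of $\triangle$ commuting with $M$ is the identity. If $\Psi$ is a second homeomorphism with $T=\Psi\circ M\circ\Psi^{-1}$, then $h:=\Phi^{-1}\circ\Psi$ satisfies $hM=Mh$; being a homeomorphism of $\triangle$ it permutes the vertices, compatibility with the vertex dynamics of $M$ forces $h(v)=v$, the backward orbit $\bigcup_n M^{-n}(v)$ is dense in $\triangle$, and $h$, permuting each finite level $M^{-n}(v)$ while respecting the nesting of the cylinders and the cell structure of $\partial\triangle$, must fix every point of it; hence $h=\mathrm{id}$ and $\Psi=\Phi$. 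The step I expect to be the real obstacle is the claim that the $M$-cylinders shrink to points \emph{everywhere}, not merely almost everywhere: this is the genuine ``continued fraction'' input --- the convergence of the M\"{o}nkemeyer algorithm --- and it is exactly what makes $\Phi$ an honest homeomorphism rather than an a.e.-defined map or a relation. It is also precisely the property that fails for most of the $216$ TRIP maps, which is why, as discussed in section \ref{convergence}, the generalized question mark map is usually only a relation.
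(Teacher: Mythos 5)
Your construction is essentially Panti's, which is all the paper itself records for this theorem: your coding maps $\pi_M,\pi_T$ are the paper's $\varphi,\nu$, and defining $\Phi$ by $\Phi(\pi_M(\omega))=\pi_T(\omega)$ is exactly the paper's $\Phi=\bar{\nu}\circ\bar{\varphi}^{-1}$, resting on the same key input that both families of cylinders shrink to points (the property you rightly single out as the genuine obstacle, and the one that fails for most TRIP maps). The uniform-limit presentation via the $\Phi_n$ and the uniqueness sketch are elaborations of details the paper delegates to \cite{panti}, but the approach is the same.
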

Proving that this homeomorphism exists and is unique is the brunt of his paper and comes largely from the fact that both $M$ and $T$ give partitions of $\triangle$ that converge to a point. Because of this, each point of $\{0,1\}^{\N}$ corresponds to a unique point in $\triangle$ under an appropriate ordering.  Then define 
\begin{align*}
\varphi&:\{0,1\}^\N\to\triangle_F\\
\nu&:\{0,1\}^\N\to\triangle_B
\end{align*}
by sending the sequence $(i_1,i_2,\dots)$ to appropriate point in $\triangle_F$ and $\triangle_B$ respectively.  It turns out that $\varphi$ and $\nu$ are continuous, surjective, and have the same fibers, so we can define an equivalence relation on $\{0,1\}^\N$ by $a\equiv b$ if and only if $\varphi(a)=\varphi(b)$ if and only if $\nu(a)=\nu(b)$.  Then we can define bijections $\bar{\varphi},\bar{\nu}:{0,1}^\N/\equiv\to\triangle$ as the natural quotient mappings of $\varphi$ and $\nu$ respectively.  Then we define $\Phi$ as follows:
\begin{definition} We define $\Phi:\triangle\to\triangle$ as the homeomorphism $\Phi=\bar{\nu}\circ\bar{\varphi}^{-1}$.  Equivalently, $\Phi(p)=\nu(a)$ for any $a$ such that $\varphi(a)=p$.
\end{definition}
With this definition of $\Phi$ we get the following commutative diagram with 
\begin{eqnarray*}
M\circ \phi &=& \phi \circ S  \\
T\circ \nu &=& \nu \circ S  \\
T\circ \Phi &=& \Phi \circ M
\end{eqnarray*} 
where $S$ denotes the shift map on $\{0,1\}^\N$ (i.e. $S(a_0a_1a_2\dots)=(a_1a_2a_3\dots)$):

$$\begin{array}{ccc}
\triangle & \overset{M}{\longrightarrow} & \triangle \\
&&\\
\phi \uparrow & & \uparrow \phi  \\
&&\\
\{0,1\}^N & \longrightarrow & \{0,1\}^N\\
&&\\
\nu \downarrow & & \downarrow \nu \\
&&\\
\triangle &  \overset{T}{\longrightarrow}& \triangle 
\end{array}$$


This is another example of a Lagarias type pun.  The above diagram provides good foreshadowing for much of this paper.  We would have liked to find analogs of the above for other multidimensional continued fractions.  Unfortunately, only partial analogs exist.  For most of the multidimensional continued fractions that we will be concerned with, the maps $\phi$ and $\nu$ do not exist, though the analogs of their inverses do.  This will explained further in section \ref{convergence}.

Now back to Panti's work on  the singularity of $\Phi$ with respect to the Lebesgue measure $\lambda$. We normalize $\lambda$ so that $\lambda(\triangle)=1$.  Panti defines a probability measure $\mu$ on $\triangle$ that is induced by 
\[h(x_1,\dots,x_n)=\frac{1}{x_1(x_1-x_2+1)(x_1-x_3+1)\cdots(x_1-x_n+1)}\]
and properly normalizing $\mu$ we have that the M\"{o}nkemeyer map $M$ preserves $\mu$ and is ergodic with respect to it.  It can also be shown that $T$ is ergodic with respect to $\lambda$ and thus that $M$ is ergodic with respect to the measure $\Phi_*^{-1}\lambda$ where $\Phi_*^{-1}\lambda(A)=\lambda(\Phi(A))$.  It turns out that $\mu$ and $\Phi_*^{-1}\lambda$ are different but both ergodic with respect to the same transformation $M$.  This means that they are mutually singular, which means by standard work that   $\Phi$ is singular with respect to $\lambda$.

\section{Triangle Partition Maps}\label{TRIP}

\subsection{Basic Notation}
Our triangle will be 
$$\triangle = \{(x,y) \in \R^2: 1>x>y>0\},$$
which has vertices $(0,0)$, $(1,0)$ and $(1,1)$.  We will regularly identify a point in $\R^3$ with a point in $\R^2$ via the map
$$\pi:\R^3 -(x\neq 0) \rightarrow \R^2$$
given by 
$$\pi   \left(  \begin{array}{c}  x \\ y \\ z  \end{array}   \right)  = \left( \frac{y}{x}, \frac{z}{x} \right).$$
Thus $\pi$ sends points on a ray in $\R^3 $ to a point in $\R^2$. This is why we will regularly  identify the triangle $\triangle$ to  the cone in $\R^3$ spanned by 
$$v_1 = \left(  \begin{array}{c}  1 \\ 0 \\ 0  \end{array}   \right) , v_2 = \left(  \begin{array}{c}  1 \\ 1 \\ 0  \end{array}   \right) , v_3  = \left(  \begin{array}{c}  1 \\ 1 \\ 1 \end{array}   \right) . $$
Note that 
$$\pi(v_1) = (0,0). \pi(v_2) = ( 1,0), \pi(v_3) = (1,1).$$
Set 
$$V= ( v_1, v_2, v_3) =  \left(  \begin{array}{ccc}  1 & 1 & 1 \\ 0 & 1 & 1 \\ 0 & 0 & 1  \end{array}   \right) .$$

\subsection{The Initial Triangle Partition}
The classical question mark function involves two different systematic methods for subdividing the unit interval, which is of course a one dimensional simplex.  Our generalizations will involve two different systematic ways for subdividing a triangle, which is a two dimensional simplex.  One of the ways will be number theoretic, and has already been used in \cite{Stern}, which is why this section has heavy overlap with that earlier paper.  Also, many of the diagrams below are similar to diagrams in \cite{Stern}.
We start with setting 
$$F_0 =  \left(  \begin{array}{ccc}  0 & 0 & 1 \\ 1& 0 & 0 \\ 0 & 1 & 1  \end{array}   \right)\; \mbox{and} \;F_1 =  \left(  \begin{array}{ccc}  1 & 0 & 1 \\ 0& 1 & 0 \\ 0 & 0 & 1  \end{array}   \right) .$$
(In the notation in the next section, $F_0$ will be $F_0(e,e,e)$ and $F_1$ will be $F_1(e,e,e)$.)
We have that 
$$VF_0= (v_1, v_2, v_3)  \left(  \begin{array}{ccc}  0 & 0 & 1 \\ 1& 0 & 0 \\ 0 & 1 & 1  \end{array}   \right) = (v_2,v_3, v_1+v_2) =  \left(  \begin{array}{ccc}  1 & 1 & 2 \\ 1& 1 & 1 \\ 0 & 1 & 1  \end{array}   \right) $$
and
$$VF_1= (v_1, v_2, v_3)  \left(  \begin{array}{ccc}  1 & 0 & 1 \\ 0& 1 & 0 \\ 0 & 0 & 1  \end{array}   \right) = (v_1,v_2, v_1+v_2) =  \left(  \begin{array}{ccc}  1 & 1 & 2 \\ 0& 1 & 1 \\ 0 & 0 & 1  \end{array}   \right). $$
We set 
$$\triangle(0) = \pi (VF_0), \; \triangle(1) = \pi (VF_1),$$
subtriangles of the original triangle $\triangle.$ The vertices of $\triangle(0)$ are 
$$(1,0), (1,1), (1/2,1/2)$$
and the vertices of $\triangle(1)$ are 
$$(0,0), (1,0), (1/2,1/2)   .  $$

Pictorially, the initial triangle $\triangle$ is 

\begin{center}  
\begin{tikzpicture}[scale=5]
\draw(0,0)--(1,0);
\draw(0,0)--(1,1);
\draw(1,0)--(1,1);
\draw[->](0,.6)node[left]{$\triangle = \pi(V)$}--(3/4, .3);

\node[below left]at(0,0){$(0,0)$};
\node[below right]at(1,0){$(1,0)$};
\node[above right]at(1,1){$(1,1)$};
\end{tikzpicture}
\end{center}

The two subtriangles are

\begin{center}  
\begin{tikzpicture}[scale=5]
\draw(0,0)--(1,0);
\draw(0,0)--(1,1);
\draw(1,0)--(1,1);
\draw(1,0)--(1/2,1/2);
\draw[->](0,.6)node[left]{$\triangle(0)= \pi(VF_0)$}--(3/4,.6);

\draw[->](0,.29)node[left]{$\triangle(1) = \pi(VF_1)$}--(.5,.29);

\node[below left]at(0,0){$(0,0)$};
\node[below right]at(1,0){$(1,0)$};
\node[above right]at(1,1){$(1,1)$};
\node[left]at(1/2,1/2){$(1/2,1/2)$};
\end{tikzpicture}
\end{center}

We continue subdividing, setting 
$$\triangle(00) = \pi ( VF_0 F_0 ), \;\triangle(01) = \pi ( VF_0 F_1 ), \;\triangle(10)= \pi ( VF_1 F_0 ),\; \triangle(11) = \pi ( VF_1 F_1 ).$$
Using that 
$$\begin{array}{ccccccc}
VF_0F_0  &=& (v_2, v_3, v_1+ v_3) F_0 &=& (v_3, v_1 + v_3 , v_1+ v_2 + v_3) & = & \left(  \begin{array}{ccc}  1 & 2 & 3 \\ 1& 1 & 2 \\ 1 & 1 & 1  \end{array}   \right)  \\
VF_0F_1 &=& (v_2, v_3, v_1+ v_3) F_1 &=& (v_2,  v_3 , v_1 +  v_2 + v_3)  & = & \left(  \begin{array}{ccc}  1 & 1 & 3 \\ 1& 1 & 2 \\ 0 & 1 & 1  \end{array}   \right)  \\ 
VF_1F_0 &=& (v_1, v_2, v_1+ v_3) F_0 &=& (v_2, v_1 + v_3 , 2v_1   + v_3)   & = & \left(  \begin{array}{ccc}  1 & 2 & 3 \\ 1& 1 & 1 \\ 0 & 1 & 1  \end{array}   \right) \\
VF_1F_1  &=& (v_1, v_2, v_1+ v_3) F_1 &=& (v_1, v_2 , 2v_1   + v_3)  & = & \left(  \begin{array}{ccc}  1 & 1 & 3 \\ 0& 1 & 1 \\ 0 & 0 & 1  \end{array}   \right) 
\end{array}$$
we have that the vertices of $\triangle(00)$ are
  $$(1,1), (1/2, 1/2), (2/3, 1/3),$$  
  the vertices of $\triangle(01)$ are
  $$(1,0), (1,1), (2/3, 1/3),$$  
  the vertices of $\triangle(10)$ are
  $$(1,0), (1/2, 1/2), (1/3, 1/3),$$  
  and the vertices of $\triangle(11)$ are
  $$(0,0), (1,0), (1/3, 1/3).$$   
   Pictorially we have   
     
     \begin{center}  
\begin{tikzpicture}[scale=5]
\draw(0,0)--(1,0);
\draw(0,0)--(1,1);
\draw(1,0)--(1,1);
\draw(1,0)--(1/2,1/2);
\draw(2/3,1/3)--(1,1);
\draw(1,0)--(1/3,1/3);

\draw[->](0,.8)node[left]{$\triangle(01)= \pi(VF_0F_1)$}--(.95,.8);

\draw[->](0,.6)node[left]{$\triangle(00)= \pi(VF_0F_0)$}--(.7,.6);

\draw[->](0,.29)node[left]{$\triangle(10)= \pi(VF_1F_0)$}--(.5,.29);

\draw[->](0,.15)node[left]{$\triangle(11) = \pi(VF_1F_1)$}--(.5,.15);

\node[below left]at(0,0){$(0,0)$};
\node[below right]at(1,0){$(1,0)$};
\node[above right]at(1,1){$(1,1)$};
\node[left]at(1/2,1/2){$(1/2,1/2)$};
\node[right]at(2/3,1/3){$(2/3,1/3)$};
\node[above left]at(1/3,1/3){$(1/3,1/3)$};
\end{tikzpicture}
\end{center}

We can continue this process, getting triangles 
$$\triangle (i_1i_2 \ldots i_n) = \pi (V F_{i_1}F_{i_2} \cdots F_{i_n}). $$

\begin{definition} A element $(\alpha, \beta) \in \triangle$ has triangle sequence $(i_1, i_2, i_3, \ldots )$, where each $i_j$ is either zero or one, if, for all $n$,
$$(\alpha, \beta) \in  \triangle(i_1, i_2, i_3, \ldots, i_n ).$$
\end{definition}

Part of the number theoretic uses of a number's triangle sequence, proven in \cite{GarrityT01, GarrityT05}  can be seen in 
\begin{theorem}  Suppose that $(\alpha, \beta) \in \triangle$ has an eventually periodic triangle sequence.  Provided that there are infinitely many zeros, we know that $\alpha$ and $\beta$ are  in the same number field of degree less than or equal to three.
\end{theorem}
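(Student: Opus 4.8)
The plan is to convert ``eventually periodic triangle sequence'' into ``eigenvector of an integer $3\times 3$ matrix'' and then read the number field off the characteristic polynomial of that matrix. For $(x,y)\in\triangle$ I use the canonical lift $(1,x,y)^{\mathsf T}\in\R^3$, so $\pi(1,x,y)=(x,y)$, and I write the triangle sequence of $(\alpha,\beta)$ as a preperiod $(i_1,\dots,i_k)$ followed by a periodic block $B=(b_1,\dots,b_p)$, setting $N=F_{i_1}\cdots F_{i_k}$ and $M=F_{b_1}\cdots F_{b_p}$. The first thing I would record is that the shift on triangle sequences is realized by an integer projective map: if $p\in\triangle(i)=\pi(VF_i)$, write a lift of $p$ as $\widetilde p=VF_i u$ with $u$ in the positive orthant; then $\pi(Vu)=\pi\!\bigl(VF_i^{-1}V^{-1}\widetilde p\bigr)$ is readily checked to be the point whose triangle sequence is the shift of that of $p$. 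Iterating along $\triangle(i_1\cdots i_n)$, the point with the sequence shifted $n$ times is $\pi\!\bigl(V(F_{i_1}\cdots F_{i_n})^{-1}V^{-1}\widetilde p\bigr)$; since $\det F_0=\det F_1=1$ and $V,V^{-1}\in\mathrm{SL}_3(\Z)$, every matrix here lies in $\mathrm{SL}_3(\Z)$.

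The hypothesis of infinitely many zeros enters through the structure theory of the triangle sequence (\cite{GarrityT01,GarrityT05}): when a triangle sequence contains infinitely many zeros, $\bigcap_n\triangle(i_1\cdots i_n)$ is a \emph{single point}, whereas when only finitely many zeros occur this intersection is a segment --- which is precisely the degenerate case that is ruled out. Thus $(\alpha,\beta)$ is the unique point with its triangle sequence, and $(\alpha',\beta'):=\pi\!\bigl(VN^{-1}V^{-1}(1,\alpha,\beta)^{\mathsf T}\bigr)$, the point whose triangle sequence is the purely periodic tail $B^\infty$, is likewise the unique point with that sequence.

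Since the sequence $B^\infty$ equals its own shift by $p$, the point obtained from $(\alpha',\beta')$ by shifting $p$ times has the same triangle sequence as $(\alpha',\beta')$ and hence, by uniqueness, equals it; by the shift formula this says $w:=V^{-1}(1,\alpha',\beta')^{\mathsf T}$ satisfies $Mw=\lambda w$ for some real $\lambda$. Writing $w=(1-\alpha',\,\alpha'-\beta',\,\beta')^{\mathsf T}$ and using $1>\alpha'>\beta'>0$ shows that $w$ has strictly positive entries, so $\lambda$ is the Perron eigenvalue of $M$; as $M$ is primitive (it is a product of $F_0$'s and $F_1$'s containing at least one $F_0$ --- for instance $F_0^4$ has all entries positive, and in general a bounded power of such an $M$ is strictly positive), $\lambda$ is a \emph{simple} root of the monic cubic $\chi_M(t)\in\Z[t]$, so $[\Q(\lambda):\Q]\le 3$, and $\ker(M-\lambda I)$ is the one-dimensional $\Q(\lambda)$-line through $w$. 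Choosing any nonzero $w$ in this $\Q(\lambda)$-line, the vector $Vw$ has first coordinate a nonzero element of $\Q(\lambda)$, so dividing through gives $(1,\alpha',\beta')^{\mathsf T}\in\Q(\lambda)^3$, i.e. $\alpha',\beta'\in\Q(\lambda)$. Finally $VNV^{-1}(1,\alpha',\beta')^{\mathsf T}$ is a nonzero scalar multiple of $(1,\alpha,\beta)^{\mathsf T}$, i.e. an integer matrix applied to a vector over $\Q(\lambda)$ with nonzero first coordinate, so $\alpha,\beta\in\Q(\lambda)$ as well; thus $\alpha$ and $\beta$ lie in the single number field $\Q(\lambda)$, of degree at most $3$ over $\Q$.

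The only step I expect to be genuinely nontrivial is establishing that infinitely many zeros force the nested triangles to collapse to a point --- equivalently, the primitivity/strict-dominance statement about the period matrix $M$. The cleanest route is to prove that $F_0$, and every product of $F_0$'s and $F_1$'s in which $F_0$ occurs at least once, is primitive (so a bounded power has all entries positive), while products built only from $F_1$ fail even to be irreducible; the latter is exactly why tails that are eventually all ones must be excluded. Everything after that --- the identity for the shift map and the descent to $\alpha$ and $\beta$ --- is routine bookkeeping with the $3\times 3$ integer matrices $V$, $F_0$, $F_1$ and the projection $\pi$.
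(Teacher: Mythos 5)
Your overall strategy---an eventually periodic sequence forces $V^{-1}(1,\alpha',\beta')^{\mathsf T}$ to be an eigenvector of the integer period matrix $M=F_{b_1}\cdots F_{b_p}$, whence the coordinates lie in $\Q(\lambda)$ for $\lambda$ a root of the monic integer cubic $\chi_M$---is exactly the argument the paper relies on. The paper does not prove this theorem in-house but cites \cite{GarrityT01,GarrityT05}, and its sketch of the parallel Theorem \ref{rational} confirms that the cited proof runs by exhibiting an integer matrix $A$ with $A(1,\alpha,\beta)^{\mathsf T}$ an eigenvector of a finite product of the partition matrices. So the core of your proposal is correct and matches the intended proof, including the descent through the preperiod matrix $N$.

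One intermediate claim is false as stated: it is \emph{not} true that every triangle sequence with infinitely many zeros has $\bigcap_n\triangle(i_1\cdots i_n)$ equal to a single point. Determining exactly when this convergence fails for the $(e,e,e)$ map is the content of \cite{GarrityT05}, and it can fail even with infinitely many zeros when the multiplicative digits $a_k$ grow too quickly; this is also why Theorem \ref{cubic} must carry ``the nested triangles converge to a point'' as an explicit hypothesis rather than deducing it. What saves your argument is that you never need the general statement: since the sequence is eventually periodic with infinitely many zeros, the period contains a zero, so (as you observe) $M$ is primitive, and Perron--Frobenius applied to the powers $M^k$ already forces $\pi(VNM^k)$ to collapse to the single point $\pi(VNw)$ with $w$ the Perron eigenvector---which simultaneously gives the ``unique point with sequence $B^\infty$'' step and identifies $\lambda$ as the simple dominant root. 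You should therefore derive the collapse from primitivity of $M$ rather than from the general infinitely-many-zeros claim; with that reordering (and a check that every product of $F_0$'s and $F_1$'s containing an $F_0$ is primitive, which holds by the strong connectivity of the associated digraph), your proof is complete.
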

Thus periodicity implies that $\alpha$ and $\beta$ are at worse cubic irrationals.  

There is one small technical detail.  What happens for a point $(\alpha, \beta) \in \triangle$ on a boundary of one of the $\triangle(i_1, i_2, i_3, \ldots, i_n ).$  This provides an ambiguous definition for the triangle sequence.  There are standard conventions for choosing which sequence to associate to $(\alpha, \beta)$, but since the boundaries are all on a set of measure zero, we will ignore this issue throughout this paper.  It makes no difference to any of the results.

\subsection{Triangle Partition Maps}
Our interpretation of the triangle map via three-by-three matrices depends on our choice of column vectors $v_1,v_2,v_3. $   We will get 215 more multidimensional continued fractions by initially permuting the vectors $v_1,v_2,v_3 $ by an element $\sigma$ of the permutation group $S_3$, applying the matrices $F_0$ or $F_1$ and then applying another element of $S_3$.   Write the six elements of $S_3$ as the three-by-three matrices 
\[(12)=\left(\begin{array}{ccc}
0&1&0\\
1&0&0\\
0&0&1\end{array}\right)\qquad
(13)=\left(\begin{array}{ccc}
0&0&1\\
0&1&0\\
1&0&0\end{array}\right)\qquad
(23)=\left(\begin{array}{ccc}
1&0&0\\
0&0&1\\
0&1&0\end{array}\right)\]
\[(123)=\left(\begin{array}{ccc}
0&1&0\\
0&0&1\\
1&0&0\end{array}\right)\qquad
(132)=\left(\begin{array}{ccc}
0&0&1\\
1&0&0\\
0&1&0\end{array}\right)\]

\begin{definition}
For  $    (\sigma, \tau_0, \tau_1)     \in S_3^3$, define
\[F_0(\sigma, \tau_0, \tau_1)=\sigma A_0\tau_0 \text{ and } F_1(\sigma, \tau_0, \tau_1)=\sigma A_1\tau_1.\]
\end{definition}
Each choice of  $(\sigma, \tau_0, \tau_1)\in S_3^3$ gives rise to a new partitioning of the triangle $\triangle,$ by simply setting
$$\triangle_{(\sigma, \tau_0, \tau_1)}(0) = \pi (VF_0(\sigma, \tau_0, \tau_1)) \; \mbox{and}  \; \triangle_{(\sigma, \tau_0, \tau_1)}(0) = \pi (VF_1(\sigma, \tau_0, \tau_1))$$
By iterating, we define for any sequence $(i_1, i_2, i_3, \ldots , i_n)$ of zeros and ones the triangle
$$\triangle_{(\sigma, \tau_0, \tau_1)}(i_1, i_2, i_3, \ldots , i_n) = \pi (V    F_{i_1     }(\sigma, \tau_0, \tau_1)   F_{i_2     }(\sigma, \tau_0, \tau_1)    \cdots F_{i_n     }(\sigma, \tau_0, \tau_1)     ) .$$
For $   (\sigma, \tau_0, \tau_1)   = (e,e,e)$, we get the partition in the previous section.  

\begin{definition}
For any $    (\sigma, \tau_0, \tau_1)     \in S_3^3$, we define the $n$th Farey partition $\mathcal{F}_n (\sigma, \tau_0, \tau_1) $  of the triangle $\triangle$ as the partitioning  given by the subtriangles $\triangle_{(\sigma, \tau_0, \tau_1)}(i_1, i_2, i_3, \ldots , i_n) $.

\end{definition} 
We will denote the limit partition as $\mathcal{F}( \sigma. \tau_0, \tau_1)$ and will say that this is the TRIP map $(\sigma, \tau_0, \tau_1).$

\begin{definition} A element $(\alpha, \beta) \in \triangle$ has $ (\sigma, \tau_0, \tau_1) $ TRIP sequence $(i_1, i_2, i_3, \ldots )$, where each $i_j$ is either zero or one, if, for all $n$,
$$(\alpha, \beta) \in \triangle_{(\sigma, \tau_0 , \tau_1)}  (i_1, i_2, i_3, \ldots, i_n ).$$
(We will sometimes call this the Farey-$ (\sigma, \tau_0, \tau_1) $ TRIP sequence $(i_1, i_2, i_3, \ldots ) $ for $(\alpha, \beta)$.)
\end{definition}
Before looking at one of the number theoretic implications of these sequences, let us look at an example, say $ ((12),(13), e)   .$
Then we have 
\begin{eqnarray*}
VF_0 ((12),(13), e)   & = & V (12) F_0 (13) \\
&=& (v_1, v_2, v_3)  (12) F_0 (13) \\
&=& (v_2, v_1, v_3) F_0 (13) \\
&=&  (v_2, v_1, v_3)  \left(  \begin{array}{ccc}  0 & 0 & 1 \\ 1& 0 & 0 \\ 0 & 1 & 1  \end{array}   \right) (13) \\
&=& (v_1, v_3, v_2 + v_3)(13)  \\
&=& (v_2+v_3, v_3, v_1)  \\
&=&  \left(  \begin{array}{ccc}  2 & 1 & 1 \\ 2& 1 & 0 \\ 1 & 1 & 0  \end{array}   \right).
\end{eqnarray*}
Then the subtriangle $\triangle_{((12),(13), e)}(0)$ will have vertices
$$(1, 1/2), \; (1,1) , \; (0,0).$$
In similar fashion
$$VF_1 ((12),(13), e) = V (12) F_1 e = (v_2, v_1, v_2 + v_3) = \left(  \begin{array}{ccc}  1 & 1 & 2 \\ 1& 0 & 2 \\ 0 & 0 & 1  \end{array}   \right).$$
The subtriangle $\triangle_{((12),(13), e)}(1)$ will have vertices
$$(1, 0), \; (0,0) , \; (1, 1/2).$$
Pictorially, these new subtriangles are

\begin{center}  
\begin{tikzpicture}[scale=5]
\draw(0,0)--(1,0);
\draw(0,0)--(1,1);
\draw(1,0)--(1,1);
\draw(0,0)--(1,1/2);
\draw[->](0,.6)node[left]{$\triangle_{ ((12),(13), e))}       (0)= \pi(VF_0   ((12),(13), e))$}--(4/5,.6);

\draw[->](0,.29)node[left]{$\triangle_{ ((12),(13), e))}    (1) = \pi(VF_1 ((12),(13), e)))$}--(4/5,.29);

\node[below left]at(0,0){$(0,0)$};
\node[below right]at(1,0){$(1,0)$};
\node[above right]at(1,1){$(1,1)$};
\node[right]at(1,1/2){$(1,1/2)$};
\end{tikzpicture}
\end{center}
Let us do one more iteration.  We have
\begin{eqnarray*}
V F_0 ((12),(13), e)  F_0 ((12),(13), e) &=& (v_2+v_3, v_3, v_1) F_0 ((12),(13), e)  \\
&=& (v_1 + v_3, v_1 , v_2 + v_3) \\
&=&   \left(  \begin{array}{ccc}  2 & 1 & 2 \\ 1& 0 & 2 \\ 1 & 0 & 1  \end{array}   \right)   \\
VF_0 ((12),(13), e)  F_1 ((12),(13), e) &=& (v_2+v_3, v_3, v_1) F_1 ((12),(13), e) \\
&=& (v_3, v_2 + v_3, v_1 + v_3)  \\
&=&  \left(  \begin{array}{ccc}  1 & 2 & 2 \\ 1& 2 & 1 \\ 1 & 1 & 1  \end{array}   \right)    \\
VF_1 ((12),(13), e)  F_0 ((12),(13), e) &=& (v_2, v_1, v_2+v_3) F_0 ((12),(13), e)   \\
&=& (v_1+v_2+v_3, v_2 + v_3, v_2)   \\
&=&   \left(  \begin{array}{ccc}  3 & 2 & 1 \\ 2  & 2 & 1 \\ 1 & 1 & 0  \end{array}   \right) \\
VF_1 ((12),(13), e)F_1 ((12),(13), e)&=&      (v_2, v_1, v_2+v_3)           F_1 ((12),(13), e)  \\
&=&  (v_1, v_2, v_1 + v_2 + v_3)  \\
&=&   \left(  \begin{array}{ccc}  1 & 1 & 3 \\ 0& 1 & 2 \\ 0 & 0 & 1  \end{array}   \right)
\end{eqnarray*}
Then the vertices of $\triangle_{((12),(13), e)}(00)$ are
$  (1/2, 1/2)$, $(0,0)$, $(1, 1/2),$
the vertices of $\triangle_{((12),(13), e)}(01)$ are
$ ( 1,1)$, $ (1, 1/2) $, $ (1/2, 1/2),$
the vertices of $\triangle_{((12),(13), e)}(10)$ are
$ (2/3, 1/3)$, $ (1, 1/2)$, $( 1,0),$
and the vertices of $\triangle_{((12),(13), e)}(11)$ are
$ (0,0)$, $  (1,0)$, $ (2/3, 1/3). $
Pictorially we have

\begin{center}  
\begin{tikzpicture}[scale=5]
\draw(0,0)--(1,0);
\draw(0,0)--(1,1);
\draw(1,0)--(1,1);
\draw(0,0)--(1,1/2);
\draw(1/2,1/2)--(1,1/2);
\draw(2/3,1/3)--(1,0);

\draw[->](0,.6)node[left]{$\triangle_{ ((12),(13), e))}       (01)$}--(4/5,.6);
\draw[->](0,.4)node[left]{$\triangle_{ ((12),(13), e))}       (00)$}--(.5,.4);

\draw[->](0,.27)node[left]{$\triangle_{ ((12),(13), e))}    (10) $}--(4/5,.27);

\draw[->](0,.13)node[left]{$\triangle_{ ((12),(13), e))}    (11) $}--(1/2,.13);

\node[below left]at(0,0){$(0,0)$};
\node[below right]at(1,0){$(1,0)$};
\node[above right]at(1,1){$(1,1)$};
\node[right]at(1,1/2){$(1,1/2)$};
\node[left]at(1/2,1/2){$(1/2,1/2)$};
\node at(2/3,1/3){$(2/3,1/3)$};
\end{tikzpicture}
\end{center}

\subsection{Additive versus multiplicative}\label{multiplicative}

The above development of the triangle sequence is called the additive approach.  In \cite{GarrityT01, GarrityT05, tripmaps, SMALL11q3, SchweigerF08}, the multiplicative version is used.  In the multiplicative version, we look at the subtriangles 
$$\triangle_k ( \sigma. \tau_0, \tau_1)= \pi(VF_1^k( \sigma. \tau_0, \tau_1)F_0( \sigma. \tau_0, \tau_1),$$   where $k$ is a non-negative integer.   Then the map
$$T^G( \sigma. \tau_0, \tau_1):\triangle \rightarrow \triangle,$$
 for an $(x,y)$ in the interior of $\triangle_k( \sigma. \tau_0, \tau_1)$, is

$$T^G( \sigma. \tau_0, \tau_1)(x,y) = \pi \left(    V(VF_1 (\sigma , \tau_0, \tau_1 ) ^kF_0   (\sigma , \tau_0, \tau_1 )  )^{-1} \left(   \begin{array}{c} 1\\ x  \\ y \end{array}\right)   \right).$$
(The superscript $``G''$ is for ``Gauss,'' as this is a generalization of the classical Gauss map for continued fractions.)  We can associate to each $(x,y) \in \triangle$ the sequence of non-negative integers $(a_1, a_2, \dots )$ if 
$$(x,y) \in \triangle_{a_1}( \sigma. \tau_0, \tau_1), T^G(x,y) \in \triangle_{a_2}( \sigma. \tau_0, \tau_1), T^G(T^G(x,y)) \in \triangle_{a_3}( \sigma. \tau_0, \tau_1), \ldots $$
(Here we are suppressing the ``$( \sigma. \tau_0, \tau_1)$'' after each of the maps $T$, for ease of notation.)
We call this sequence the multiplicative $( \sigma. \tau_0, \tau_1)$ sequence, or sometimes the multiplicative Farey $( \sigma. \tau_0, \tau_1)$ sequence.

We can in straightforward way translate between the multiplicative version and the additive version, as we simply take any non-negative $k$ and replace it by $k$ ones followed by a single $0$.  For example, the multiplicative 

$$(2, 3,1,0,2,\ldots )$$
is additively 
$$(1101110100110 \ldots).$$
More precisely, if a pair $(\alpha, \beta)$ has multiplicative sequence $(a_1, a_2, \dots )$, the corresponding additive sequence will be  $a_1$ ones, followed by a zero, then $a_2$ ones, followed by a zero, etc.

It is easier to use the additive version for the definition of our analogs for the Minkowski question mark function.  In the proofs, as we will see, it will be useful to use the multiplicative version.

\subsection{Periodicity, Cubic Irrationals and Convergence Questions}
Each $(\sigma, \tau_0, \tau_1)$ is a different multidimensional continued fraction algorithm. 
Theorem 6.2  in \cite{tripmaps}  is
\begin{theorem} \label{cubic} Suppose that $(\alpha, \beta) \in \triangle$ has an eventually periodic $(\sigma, \tau_0, \tau_1) $  TRIP sequence $(i_0. i_1, i_2, \ldots $ and that the nested sequence of triangles $\triangle_{(\sigma, \tau_0, \tau_1)}  (   i_0  ) \supset \triangle_{(\sigma, \tau_0, \tau_1)}  (   i_0 i_1 )   \supset \triangle_{(\sigma, \tau_0, \tau_1)}  (   i_0 i_1 i_2  ) \supset \cdots  $   converge to a point.  Then $\alpha$ and $\beta$ are  in the same number field of degree less than or equal to three.
\end{theorem}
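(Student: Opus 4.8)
The plan is to turn the two hypotheses into a single linear-algebra fact: an eventually periodic TRIP sequence whose nested triangles shrink to a point forces that point to be a projective eigenvector of a matrix conjugate to a fixed element of $GL_3(\Z)$, and the characteristic polynomial of such a matrix is a monic cubic over $\Z$, so its eigenvalue generates a number field of degree at most $3$.

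First I would observe that every matrix in sight is integral of determinant $\pm1$: this holds for $A_0 = F_0(e,e,e)$, for $A_1 = F_1(e,e,e)$, for the six permutation matrices, and for $V$, hence for every $F_i(\sigma,\tau_0,\tau_1) = \sigma A_i\tau_i$ and for every finite product of such maps; so all products below lie in $GL_3(\Z)$, and $V\in GL_3(\Z)$. Write the $(\sigma,\tau_0,\tau_1)$ TRIP sequence of $(\alpha,\beta)$ as $w\,u\,u\,u\cdots$ with pre-period word $w=(i_0,\dots,i_{k-1})$ (possibly empty) and period $u=(i_k,\dots,i_{k+p-1})$, and set $M_w=F_{i_0}(\sigma,\tau_0,\tau_1)\cdots F_{i_{k-1}}(\sigma,\tau_0,\tau_1)$ and $M_u=F_{i_k}(\sigma,\tau_0,\tau_1)\cdots F_{i_{k+p-1}}(\sigma,\tau_0,\tau_1)$, both in $GL_3(\Z)$. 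The elementary identity $M_wM_u^{\,m}=(M_wM_uM_w^{-1})^{m}M_w$ (immediate induction on $m$) gives $VM_wM_u^{\,m}=E^{m}(VM_w)$, where $E:=VM_wM_uM_w^{-1}V^{-1}\in GL_3(\Q)$; hence if $\widetilde L$ denotes the projective transformation of $\R^2$ induced by $E$ through $\pi$, then
\[\triangle_{(\sigma,\tau_0,\tau_1)}(w u^m)=\pi\bigl(VM_wM_u^{\,m}\bigr)=\widetilde L^{\,m}\bigl(\triangle_{(\sigma,\tau_0,\tau_1)}(w)\bigr)\qquad(m\ge 0).\]

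These triangles are nested and, by hypothesis, their intersection is the single point $(\alpha,\beta)$. Applying $\widetilde L$ to this intersection and using $\widetilde L\bigl(\widetilde L^{m}(\triangle(w))\bigr)=\widetilde L^{m+1}(\triangle(w))$ together with the nesting shows $\widetilde L(\alpha,\beta)=(\alpha,\beta)$: the limit point is a fixed point of $\widetilde L$. That is exactly to say that $(1,\alpha,\beta)^{\!\top}$ is an eigenvector of $E$, say with eigenvalue $\mu$, equivalently that $V^{-1}(1,\alpha,\beta)^{\!\top}$ is a $\mu$-eigenvector of $N:=M_wM_uM_w^{-1}$. Since $N$ is conjugate to $M_u\in GL_3(\Z)$, its characteristic polynomial equals that of $M_u$, a monic cubic in $\Z[x]$; hence $\mu$ is an algebraic number with $[\Q(\mu):\Q]\le 3$. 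The $\mu$-eigenspace of $N$ is cut out by linear equations over $\Q(\mu)$, so — once one knows it is one-dimensional — $V^{-1}(1,\alpha,\beta)^{\!\top}$, and therefore $(1,\alpha,\beta)^{\!\top}$, is a scalar multiple of a vector with entries in $\Q(\mu)$; dividing by the first coordinate gives $\alpha,\beta\in\Q(\mu)$, a common number field of degree at most $3$.

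The only genuinely delicate point, which I expect to be the main obstacle, is ruling out a higher-dimensional $\mu$-eigenspace (and, relatedly, keeping the first coordinate of $(1,\alpha,\beta)^{\!\top}$ nonzero — this is where one uses that $(\alpha,\beta)$ is interior to $\triangle$ and the standing convention to ignore the measure-zero boundary). A two-dimensional $\mu$-eigenspace of $E$ would projectivize to a line fixed pointwise by $\widetilde L$ and passing through $(\alpha,\beta)$, hence meeting $\triangle_{(\sigma,\tau_0,\tau_1)}(w)$ in a nondegenerate segment; that whole segment would survive in every $\widetilde L^{m}(\triangle(w))$, contradicting that the intersection is one point — so the eigenspace is one-dimensional. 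Everything else is formal; in particular the argument is insensitive to whether $w$ is empty or $p=1$, and specializing to $(\sigma,\tau_0,\tau_1)=(e,e,e)$ recovers the degree-$3$ theorem stated earlier, the ``infinitely many zeros'' assumption there being what makes the nested triangles converge. One could also phrase the whole argument multiplicatively — eventual periodicity means the Gauss map $T^G(\sigma,\tau_0,\tau_1)$ has a periodic point, a genuine projective-linear fixed point of an iterate of $T^G$, producing the eigenvector directly — but the additive formulation avoids the special-casing needed when the period is a string of $1$'s.
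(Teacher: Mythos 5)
Your argument is correct and is essentially the proof the paper relies on: the paper does not prove Theorem \ref{cubic} itself but quotes it as Theorem 6.2 of \cite{tripmaps}, and when sketching that proof (in the discussion of Theorem \ref{rational}) describes its key step as producing an invertible integer matrix $A$ with $A(1,\alpha,\beta)^{T}$ an eigenvector of a finite product of the partition matrices --- exactly your $A=M_w^{-1}V^{-1}$ and product $M_u$, with the characteristic polynomial of $M_u\in GL_3(\Z)$ giving the degree-three bound. Your handling of the one delicate point (one-dimensionality of the eigenspace, forced by convergence to a point) is the right supplement; the only residual case, where the fixed line meets every $\triangle(wu^m)$ solely at a common vertex, makes $(\alpha,\beta)$ rational and hence harmless.
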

Thus again  periodicity implies that $\alpha$ and $\beta$ are at worse cubic irrationals.  

For a given  $(\sigma, \tau_0, \tau_1)$, we are far from guaranteed that the nested $\triangle_{(\sigma, \tau_0, \tau_1)}  (   i_0  ) \supset \triangle_{(\sigma, \tau_0, \tau_1)}  (   i_0 i_1 )   \supset \triangle_{(\sigma, \tau_0, \tau_1)}  (   i_0 i_1 i_2  ) \supset \cdots  $ will converge to a point, though we do know that the convergence can be no worse than a line segment.  Also in \cite{tripmaps}, Proposition 7.9 explicitly lists the 156 triangle partition maps for which there are periodic sequences whose corresponding nested sequence of triangles do not converge to a point and Propostion 7.11 explicitly shows for the remaining 60 triangle partition  maps that periodic sequences must have their nested sequence of triangles converge to a point. 

Convergence of the nested sequence  of triangles in general, though, seems hard.  When we have convergence and when we do not is known for the $(e,e, e) $ case \cite{GarrityT05}. In Section \ref{Monkmayer}, we will how that the nested sequence of  triangles will always converge to a point for $24$ of the  triangle partition algorithms.   In Section \ref{degenerate}, we show that for a different $24$ triangle partition algorithms, no nested sequence of the triangles will converge to a point.  Beyond that, nothing is really known.

\section{Barycentric Triangle Partition Maps}\label{bary}

Instead of starting with $F_0$ and $F_1$ as  above, we  now define matrices $G_0$  and $G_1$ by replacing the 1's in the third columns of $F_0$  and $F_1$ with $1/2$s:
\begin{align*}
G_0&=\left(\begin{array}{ccc}
0&0&1/2\\
1&0&0\\
0&1&1/2\end{array}\right)&G_1&=\left(\begin{array}{ccc}
1&0&1/2\\
0&1&0\\
0&0&1/2\end{array}\right)
\end{align*}
Then for a general triangle partition sequence  given by $(\sigma, \tau_0,\tau_1)$ we simply define 
\begin{eqnarray*}
G_0(\sigma, \tau_0,\tau_1)  &  =  & \sigma G_0\tau_0 \\
G_1  (\sigma, \tau_0,\tau_1)  &  =  & \sigma G_1   \tau_0
\end{eqnarray*}

For notation, we say $\Gamma_{(\sigma, \tau_0,\tau_1) }  (i_1,\dots,i_n)$ corresponds to the triangle whose vertices are given by $$\pi(V G_{i_0}(\sigma, \tau_0,\tau_1)  \cdots    G_{i_n}(\sigma, \tau_0,\tau_1)).$$
  
 Let us look at a few examples.   Consider the $(e,e,e)$ case.   By direct calculation, we have 
 
 \begin{eqnarray*} 
 VG_0(e,e,e) G_0(e,e,e) &=&  \left(\begin{array}{ccc} 1&1&1\\ 0&1&1\\0&0&1\end{array}\right)
   \left(\begin{array}{ccc} 0&0&1/2\\ 1&0&0\\0&1&1/2\end{array}\right)  \left(\begin{array}{ccc} 0&0&1/2\\ 1&0&0\\0&1&1/2\end{array}\right)  \\
   &=&  \left(\begin{array}{ccc} 1 &1  &1/2\\ 1&    1/2  &    3/4   \\1&   1/2  &1/4   \end{array}\right) \\
    VG_0(e,e,e) G_1  (e,e,e)  &=&   \left(\begin{array}{ccc} 1 &1  &1    \\ 1&    1  &    3/4   \\0    &   1  &1/4   \end{array}\right) \\
    VG_1(e,e,e) G_0  (e,e,e)  &=&   \left(\begin{array}{ccc} 1 &1  &1    \\ 0   &    1/2 &    1/4   \\0    &   1/2  &1/4   \end{array}\right)  \\
    VG_1(e,e,e) G_1  (e,e,e)  &=&   \left(\begin{array}{ccc} 1 &1  &1    \\ 0   &    1 &    1/4   \\0    &   0  &1/4   \end{array}\right) 
 \end{eqnarray*}

  Pictorially, we have
      
     \begin{center}  
\begin{tikzpicture}[scale=5]
\draw(0,0)--(1,0);
\draw(0,0)--(1,1);
\draw(1,0)--(1,1);
\draw(1,0)--(1/2,1/2);
\draw(3/4,1/4)--(1,1);
\draw(1,0)--(1/4,1/4);

\draw[->](0,.8)node[left]   {$\Gamma_{(e,e,e)}(01)= \pi(VG_0(e,e,e)G_1(e,e,e))$  }--(.97,.8);

\draw[->](0,.6)node[left] {$\Gamma_{(e,e,e)}(00)= \pi(VG_0(e,e,e)G_0(e,e,e))$  }--(.7,.6);

\draw[->](0,.34)node[left]     {$\Gamma_{(e,e,e)}(10)= \pi(VG_1(e,e,e)G_1(e,e,e))$  }    --(.5,.34);

\draw[->](0,.10)node[left]      {$\Gamma_{(e,e,e)}(11)= \pi(VG_1(e,e,e)G_1(e,e,e))$  }--(.5,.10);

\node[below left]at(0,0){$(0,0)$};
\node[below right]at(1,0){$(1,0)$};
\node[above right]at(1,1){$(1,1)$};
\node[left]at(1/2,1/2){$(1/2,1/2)$};
\node[right]at(3/4,1/4){$(3/4,1/4)$};
\node[left]at(1/4,1/4){$(1/4,1/4)$};
\end{tikzpicture}
\end{center}

Note that this partitioning looks similar, but is different, from the corresponding partitioning $\triangle_{(e,e,e)}(00)   $, $\triangle_{(e,e,e)}(01)   $, $\triangle_{(e,e,e)}(10)   $ and
$\triangle_{(e,e,e)}(11)   $,

  We could also do similar calculations for the $((12),(13), e))$ case, getting

\begin{center}  
\begin{tikzpicture}[scale=5]
\draw(0,0)--(1,0);
\draw(0,0)--(1,1);
\draw(1,0)--(1,1);
\draw(0,0)--(1,1/2);
\draw(1/2,1/2)--(1,1/2);
\draw(1/2,1/4)--(1,0);

\draw[->](0,.8)node[left] {$\Gamma_{((12),(13), e))}(01)= \pi(VG_0((12),(13), e))G_1((12),(13), e)))$  }--(.9,.8);
\draw[->](0,.4)node[left] {$\Gamma_{((12),(13), e))}(00)= \pi(VG_0((12),(13), e))G_0((12),(13), e)))$  }--(.5,.4);

\draw[->](0,.19)node[left] {$\Gamma_{((12),(13), e))}(10)= \pi(VG_1((12),(13), e))G_0((12),(13), e)))$  }   --(4/5,.19);

\draw[->](0,.08)node[left]    {$\Gamma_{((12),(13), e))}(11)= \pi(VG_1((12),(13), e))G_1((12),(13), e)))$  }--(1/2,.08);

\node[below left]at(0,0){$(0,0)$};
\node[below right]at(1,0){$(1,0)$};
\node[above right]at(1,1){$(1,1)$};
\node[right]at(1,1/2){$(1,1/2)$};
\node[left]at(1/2,1/2){$(1/2,1/2)$};
\node at(1/2,1/4){$(1/2,1/4)$};
\end{tikzpicture}
\end{center}

Note again that  this partitioning looks similar, but is different, from the corresponding partitioning $\triangle_{((12),(13), e))}(00)   $, $\triangle_{((12),(13), e))}(01)   $, $\triangle_{((12),(13), e))}(10)   $ and
$\triangle_{((12),(13), e))}(11)   $.

Each choice of a triple $(\sigma, \tau_0, \tau_1) \in S_3$ will given rise to finer and finer partitionings by triangles 
$\Gamma_{( \sigma, \tau_0,  \tau_1)}(i_0. i_1 ,\ldots ,i_n)$.

\begin{definition}
For any $    (\sigma, \tau_0, \tau_1)     \in S_3^3$, we define the $n$th Barycentric partition $\mathcal{B}_n (\sigma, \tau_0, \tau_1) $  of the triangle $\triangle$ as the partitioning  given by the subtriangles $\Gamma_{(\sigma, \tau_0, \tau_1)}(i_1, i_2, i_3, \ldots , i_n) $.

\end{definition} 
We will denote the limit partition as $\mathcal{B}( \sigma. \tau_0, \tau_1)$.

This leads to another natural way to classify elements in the triangle
$\triangle$, namely

\begin{definition} A element $(\alpha, \beta) \in \triangle$ has Barycentric-$ (\sigma, \tau_0, \tau_1) $ TRIP sequence $(i_1, i_2, i_3, \ldots )$, where each $i_j$ is either zero or one, if, for all $n$,
$$(\alpha, \beta) \in \Gamma_{(\sigma, \tau_0, \tau_ 1 )}(i_1, i_2, i_3, \ldots, i_n ).$$
\end{definition}

We have seen that periodicity of Farey-$ (\sigma, \tau_0, \tau_1) $ TRIP sequences are linked to the possibility of the pair $(\alpha, \beta)$ being in the same cubic number field. For Barycentric-$ (\sigma, \tau_0, \tau_1) $ triangle sequences, periodicity is linked to  $(\alpha, \beta)$ being rationals.

\begin{theorem} \label{rational} Suppose that $(\alpha, \beta) \in \triangle$ has an eventually periodic Barycentric-$(\sigma, \tau_0, \tau_1) $ TRIP sequence $(i_0. i_1, i_2, \ldots )$ and that the nested sequence of triangles $\Gamma_{(\sigma, \tau_0, \tau_1)}  (   i_0  ) \supset \triangle_{(\sigma, \tau_0, \tau_1)}  (   i_0 i_1 )   \supset \triangle_{(\sigma, \tau_0, \tau_1)}  (   i_0 i_1 i_2  ) \supset \cdots  $   converge to a point. Then $\alpha$ and $\beta$ are  both rational numbers.
\end{theorem}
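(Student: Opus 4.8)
The plan is to mimic the one-dimensional fact that a real number has an eventually periodic \emph{binary} expansion precisely when it is rational, transported through the barycentric matrices $G_0,G_1$. First I would observe that, up to the fixed permutations $\sigma,\tau_0,\tau_1$, the matrices $G_{i}(\sigma,\tau_0,\tau_1)$ are conjugates of the two basic matrices $G_0,G_1$ by permutation matrices, so a product $G_{i_1}(\sigma,\tau_0,\tau_1)\cdots G_{i_n}(\sigma,\tau_0,\tau_1)$ differs from a corresponding word in $G_0,G_1$ only by a permutation on the left and one on the right. Since the convergence hypothesis is assumed outright, I do not need to analyze \emph{which} $(\sigma,\tau_0,\tau_1)$ give convergence; I only need to extract the limiting point from a periodic word. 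So the real content is: for an eventually periodic sequence $(i_0,i_1,\dots)$, identify the point $\bigcap_n \Gamma_{(\sigma,\tau_0,\tau_1)}(i_0,\dots,i_n)$ and show both coordinates are rational.

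The key step is an eigenvector computation. Write the periodic tail as a word $w$ in $\{0,1\}$ of length $p$, and let $M = G_{w_1}(\sigma,\tau_0,\tau_1)\cdots G_{w_p}(\sigma,\tau_0,\tau_1)$ be the associated $3\times 3$ matrix, whose entries lie in $\Z[1/2]\subset\Q$ (each $G_i$ has rational entries, so any finite product does). Under the assumption that the nested triangles converge to a point $q$, the images $\pi(VM^k u)$, for any interior seed $u$, converge to $q$; standard Perron--Frobenius-type reasoning shows $q=\pi(V\,x)$ where $x$ is an eigenvector of $M$ for its dominant eigenvalue $\rho$. The point to push on is that $M$ is a matrix over $\Q$, hence its characteristic polynomial has rational coefficients, but a priori $\rho$ need not be rational — and this is where I expect the main obstacle to lie. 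The resolution is that, because each $G_i$ has the row/column structure making $M$ \emph{column-stochastic after rescaling the third column}, or more precisely because the $G_i$ are (up to the fixed permutation conjugation) substochastic in a way forced by the $1/2$'s replacing the $1$'s in the third column, the relevant eigenvalue is forced to be rational: concretely, I would argue the dominant eigenvalue of any word in $G_0,G_1$ is a power of $1/2$ times an integer, or at least lies in $\Q$, by tracking the determinant and the action on the ``slack'' coordinate that the barycentric replacement introduces. Once $\rho\in\Q$, the eigenvector $x$ solving $(M-\rho I)x=0$ can be chosen in $\Q^3$ (the system has rational coefficients and a rational solution exists), so $q=\pi(Vx)$ has rational coordinates.

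Having handled the purely periodic case, the eventually periodic case follows formally: if $(i_0,\dots,i_{m-1})$ is the preperiod and $w$ the period, then
\[
\Gamma_{(\sigma,\tau_0,\tau_1)}(i_0,\dots,i_{m-1},w,w,\dots)
  = \pi\!\left(V\,G_{i_0}(\sigma,\tau_0,\tau_1)\cdots G_{i_{m-1}}(\sigma,\tau_0,\tau_1)\cdot x\right),
\]
where $x\in\Q^3$ is the rational eigenvector found above and the finite prefix matrix again has rational entries; a rational vector hit by a rational matrix and then by $\pi$ (which divides rational coordinates) lands at a point of $\Q^2$. The only place real care is needed is the boundary/ambiguity convention for the TRIP sequence, but as the paper already notes this lies on a measure-zero set and can be ignored. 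I would also double-check that convergence to a point (rather than a segment) is exactly what forces a \emph{one}-dimensional eigenspace for $\rho$, so that $q$ is well-defined — but that is handed to us by hypothesis. So, to summarize the order: (1) reduce products of $G_i(\sigma,\tau_0,\tau_1)$ to permutation-conjugates of words in $G_0,G_1$, all with rational entries; (2) show the limit point of a convergent periodic nest is $\pi(Vx)$ for a dominant eigenvector $x$ of the period matrix $M$; (3) prove $\rho\in\Q$ via the barycentric ``$1/2$'' structure, hence $x$ may be taken rational; (4) extend to eventual periodicity by pre-multiplying by the rational prefix matrix; (5) conclude $(\alpha,\beta)=q\in\Q^2$. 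The crux — and the step I would spend the most effort on — is (3), rationality of the dominant eigenvalue of an arbitrary word in $G_0,G_1$.
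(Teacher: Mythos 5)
Your architecture --- pass to the periodic word $M$, identify the limit point as $\pi(Vx)$ for a dominant eigenvector $x$ of $M$, and deduce rationality of the limit from rationality of the dominant eigenvalue --- is the same skeleton the paper uses (it defers the eigenvector setup to Theorem 6.2 of \cite{tripmaps} and then adds a single observation about the $G$'s). But the step you yourself flag as the crux, rationality of the dominant eigenvalue $\rho$, is never actually established in your proposal, and the mechanisms you float for it would not close it: the determinant of a length-$n$ word is $\pm 2^{-n}$, which constrains only the \emph{product} of the three eigenvalues and says nothing about the rationality of any one of them, so ``tracking the determinant'' cannot by itself force $\rho\in\Q$; and ``column-stochastic after rescaling the third column'' / ``substochastic'' are both slightly off target.

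The missing idea is that no rescaling is needed: $G_0$ and $G_1$ are \emph{exactly} column-stochastic (every column sums to $1$ --- this is precisely what replacing the $1$'s in the third columns of $F_0,F_1$ by $1/2$'s accomplishes), permutation matrices are column-stochastic, and products of column-stochastic matrices are column-stochastic. Hence any word $M$ in the $G_i(\sigma,\tau_0,\tau_1)$ has $(1,1,1)$ as a left eigenvector with eigenvalue $1$ and has spectral radius $1$, so the dominant eigenvalue is $\rho=1\in\Q$ on the nose; the eigenvector is then a solution of the rational linear system $(M-I)x=0$, whose kernel has a rational basis (one-dimensionality being supplied by the convergence-to-a-point hypothesis, as you note). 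With that one observation inserted, your steps (2), (4), (5) go through and the argument coincides with the paper's. A minor separate inaccuracy: a word in the $G_i(\sigma,\tau_0,\tau_1)$ is not a single two-sided permutation-conjugate of a word in $G_0,G_1$, since the permutations interleave throughout the product; but you only use this to conclude the entries are rational, which holds regardless.
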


The proof is almost exactly the same as the proof in Theorem 6.2  in \cite{tripmaps}.  That proof shows that there  is an invertible $3 \times 3$ matrix $A$ with integer entries so that 
$$A \left(  \begin{array}{c} 1 \\ \alpha \\ \beta \end{array} \right)$$
  is an eigenvector of a finite product of matrices $G_0(\sigma, \tau_0,\tau_1)$ and $G_1(\sigma, \tau_0,\tau_1).$  Unlike the two matrices $F_0(\sigma, \tau_0,\tau_1)$  and $F_1(\sigma, \tau_0,\tau_1)$, both of the matrices $G_0(\sigma, \tau_0,\tau_1)$ and $G_1(\sigma, \tau_0,\tau_1)$ are Markov (meaning that the sums of the columns is always one).  It is well-known that the eigenvectors of Markov matrices must be rational.

As with the  TRIP  sequences, as we mentioned  in subsection \ref{multiplicative}, what we are doing here  is  the additive approach. Of course, there is also a Barycentric multiplicative version.  We simply set 
  $\Gamma_k   ( \sigma. \tau_0, \tau_1)   = \pi(VF_1^k( \sigma. \tau_0, \tau_1) F_0( \sigma. \tau_0, \tau_1) $,  where $k$ is a non-negative integer.   Then the map
$$T^B( \sigma. \tau_0, \tau_1):\triangle \rightarrow \triangle,   $$
for an $(x,y)$ in the interior of $     \Gamma_k   ( \sigma. \tau_0, \tau_1) $, is
$$T^B( \sigma. \tau_0, \tau_1)(x,y) = \pi \left(    V(VG_1 (\sigma , \tau_0, \tau_1 ) ^kG_0   (\sigma , \tau_0, \tau_1 )  )^{-1} \left(   \begin{array}{c} 1\\ x  \\ y \end{array}\right)   \right).$$

As before,  we can associate to each $(x,y) \in \triangle$ the sequence of non-negative integers $(a_1,a_2, \dots )$ if 
$$(x,y) \in  \Gamma_{a_1}   ( \sigma. \tau_0, \tau_1), T^B(x,y) \in  \Gamma_{a_2}   ( \sigma. \tau_0, \tau_1), T^B(^BT^B(x,y)) \in  \Gamma_{a_3}   ( \sigma. \tau_0, \tau_1), \ldots $$
(We are again are suppressing here the $ ( \sigma. \tau_0, \tau_1)$.)
We can still in straightforward way translate between the multiplicative version and the additive version, as we simply take any non-negative $k$ and replace it by $k$ ones followed by a single $0$.

  \section{The Analog to the Minkowksi Question Mark Function} \label{analog}
 
  For each choice of $(\sigma,\tau_0,\tau_1) \in S_3 \times S_3 \times S_3,$ we have two natural partitionings of the triangle $\triangle$, namely the Farey partition $\mathcal{F}$ and the Barycentric partition $\mathcal{B}$.

   For each sequence $(i_1,\dots,i_n)\in\{0,1\}^n$, define
   
   \[\Phi_n  (\sigma,\tau_0,\tau_1) :\triangle\to\triangle\]
by $\Phi_n(\triangle_{(\sigma,\tau_0,\tau_1)}(i_1,\dots,i_n))=\Gamma_{(\sigma,\tau_0,\tau_1)}(i_1,\dots,i_n)$ for every $(i_1,\dots,i_n)\in\{0,1\}^n$ by mapping vertices to corresponding vertices and extending to interiors via linearity.  

Let us look at an example.  For the triple $(e,e,e)$, we earlier saw that

\begin{tikzpicture}[scale=4.5]
\draw(0,0)--(1,0);
\draw(0,0)--(1,1);
\draw(1,0)--(1,1);
\draw(1,0)--(1/2,1/2);
\draw(2/3,1/3)--(1,1);
\draw(1,0)--(1/3,1/3);

\draw[->](0,.8)node[left]{$\triangle(01)= \pi(VF_0F_1)$}--(.95,.8);

\draw[->](0,.6)node[left]{$\triangle(00)= \pi(VF_0F_0)$}--(.7,.6);

\draw[->](0,.29)node[left]{$\triangle(10)= \pi(VF_1F_0)$}--(.5,.29);

\draw[->](0,.15)node[left]{$\triangle(11) = \pi(VF_1F_1)$}--(.5,.15);

\node[below left]at(0,0){$(0,0)$};
\node[below right]at(1,0){$(1,0)$};
\node[above right]at(1,1){$(1,1)$};
\node[left]at(1/2,1/2){$(1/2,1/2)$};
\node[right]at(2/3,1/3){$(2/3,1/3)$};
\node[above left]at(1/3,1/3){$(1/3,1/3)$};
\end{tikzpicture}    \begin{tikzpicture}[scale=4.5]
\draw(0,0)--(1,0);
\draw(0,0)--(1,1);
\draw(1,0)--(1,1);
\draw(1,0)--(1/2,1/2);
\draw(3/4,1/4)--(1,1);
\draw(1,0)--(1/4,1/4);

\draw[->](0,.8)node[left]   {$\Gamma_{(e,e,e)}(01)$  }--(.97,.8);

\draw[->](0,.6)node[left] {$\Gamma_{(e,e,e)}(00)$  }--(.7,.6);

\draw[->](0,.34)node[left]     {$\Gamma_{(e,e,e)}(10)$  }    --(.5,.34);

\draw[->](0,.10)node[left]      {$\Gamma_{(e,e,e)}(11)$  }--(.5,.10);

\node[below left]at(0,0){$(0,0)$};
\node[below right]at(1,0){$(1,0)$};
\node[above right]at(1,1){$(1,1)$};
\node[left]at(1/2,1/2){$(1/2,1/2)$};
\node[right]at(3/4,1/4){$(3/4,1/4)$};
\node[left]at(1/4,1/4){$(1/4,1/4)$};
\end{tikzpicture}

Then we have 
\begin{eqnarray*}
\Phi_2( e, e , e )  ( 0,0) & = & (0, 0)  \\
\Phi_2( e, e , e ) (1,0) &=& (1,0)  \\
\Phi_2( e, e , e ) (1/3, 1/3) &=& (1/3, 1/3)   \\
\Phi_2( e, e , e )  (2/3, 1/3) &=& (3/4, 1/4)  \\
\Phi_2( e, e , e ) (1/2, 1/2) &=& (1/2, 1/2)  \\
\Phi_2( e, e , e ) (1,1) &=& (1,1).
\end{eqnarray*}

We would like to define our new function to be
\[\Phi(\sigma,\tau_0,\tau_1) =\lim_{n\to\infty}\Phi_n (\sigma,\tau_0,\tau_1),\]
but we must exercise care.  The difficulty is that there are times,  for various  choices of $(\sigma,\tau_0,\tau_1)\in S_3$, that  we do not have convergence of the $\triangle_{(\sigma,\tau_0,\tau_1)}(i_1,\dots,i_n)$ or for  the $\Gamma_{(\sigma,\tau_0,\tau_1)} (i_1,\dots,i_n)$, or, in other words, it is not necessarily true, for a fixed sequence $(i_1, i_2, i_3, \ldots )$ that 

$$\lim_{n\rightarrow \infty} \triangle_{ (\sigma,\tau_0,\tau_1)}(i_1,\dots,i_n) = \;\mbox{single point} \; \mbox{or} \;   \lim_{n\rightarrow \infty}\Gamma_{ (\sigma,\tau_0,\tau_1))}(i_1,\dots,i_n) = \;\mbox{single point}   $$

We do know that these intersections  are either single points or line segments.  Thus we need our $\Phi  (\sigma,\tau_0,\tau_1)$ not to be a functions] sending points to points but to be a relation, sending a point to either a single point or a segment.  

We are now ready to officially define the $(\sigma, \tau_0, \tau_1)$ question mark function $\Phi  (\sigma,\tau_0,\tau_1)$.  Given an $(x,y)\in \triangle$, there are two possibilities. If there is a sequence 
$(i_1,\dots,i_n)$ with $(x,y) \in \partial (\triangle_{(\sigma,\tau_0,\tau_1)}(i_1,\dots,i_n))$, then $\Phi(\sigma,\tau_0,\tau_1)(x,y)$ will be the corresponding point, via linearity, on  $\partial (\Gamma_{(\sigma,\tau_0,\tau_1)}(i_1,\dots,i_n))$.   This only happens on a set of measure zero.  Thus for almost all $(x,y)\in \triangle$, there is an infinite sequence $(i_1, \i_2, i_3, \dots )$ with $(x,y)$ in the interior of each $\triangle_{(\sigma,\tau_0,\tau_1)}(i_1,\dots,i_n))$.  

\begin{definition} For a given $(\sigma,\tau_0,\tau_1)$ and for an  $(x,y)\in \triangle$, suppose that there is an infinite sequence $(i_1, \i_2, i_3, \dots )$ with $(x,y)$ in the interior of each $\triangle_{(\sigma,\tau_0,\tau_1)}(i_1,\dots,i_n))$.  
Then  $\Phi(\sigma,\tau_0,\tau_1)(x,y)$ is  the following line segment or point:
$$\Phi(\sigma,\tau_0,\tau_1)(x,y) =    \lim_{n\rightarrow \infty} \Gamma_{(\sigma,\tau_0,\tau_1)}(i_1,\dots,i_n) .$$
\end{definition}

Actually, for some $(\sigma, \tau_0, \tau_1)$, we have that $ \lim_{n\rightarrow \infty} \Gamma_{(\sigma,\tau_0,\tau_1)}(i_1,\dots,i_n) $ will be a single point, in which case of course, $\Phi(\sigma,\tau_0,\tau_1)(x,y) $ will be an actual function.  Further, for some $(\sigma, \tau_0, \tau_1)$, we have that $ \lim_{n\rightarrow \infty} \Gamma_{(\sigma,\tau_0,\tau_1)}(i_1,\dots,i_n) $ is almost everywhere a single point, in whch case $\Phi(\sigma,\tau_0,\tau_1)(x,y) $ will be an actual function almost everywhere.  But there are $(\sigma, \tau_0, \tau_1)$ for which $ \lim_{n\rightarrow \infty} \Gamma_{(\sigma,\tau_0,\tau_1)}(i_1,\dots,i_n) $ is never a point.  We classify these different types in section \ref{convergence}.

\section{Cubics to Rationals}
The classical Minkowski question mark function sends quadratic irrationals to rational numbers.  We have the following partial analog:

\begin{theorem} For a given $(\sigma, \tau_0, \tau_1)$ and a sequence $(i_1, i_2, i_3, \ldots )$ of zeros and ones,  suppose that 
\begin{eqnarray*}
\lim_{n\rightarrow \infty} \triangle_{ (\sigma,\tau_0,\tau_1)}(i_1,\dots,i_n) &=& (\alpha, \beta) \\
  \lim_{n\rightarrow \infty}\Gamma_{ (\sigma,\tau_0,\tau_1))}(i_1,\dots,i_n) &=&(r,s).
\end{eqnarray*}
Then $(\alpha, \beta)$ are algebraic numbers in the same number field of degree less than or equal to three, $(r,s)$ are rational numbers and
$$\Phi(\sigma,\tau_0,\tau_1)(\alpha, \beta) = (r,s).$$
\end{theorem}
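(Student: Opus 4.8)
The plan is to combine the two periodicity theorems already stated in the excerpt — Theorem \ref{cubic} for the Farey partition and Theorem \ref{rational} for the Barycentric partition — with the definition of $\Phi(\sigma,\tau_0,\tau_1)$ as a limit of nested Barycentric triangles. First I would observe that the hypothesis that $\lim_{n\to\infty}\triangle_{(\sigma,\tau_0,\tau_1)}(i_1,\dots,i_n)$ is a single point $(\alpha,\beta)$ means in particular that $(\alpha,\beta)$ lies in the interior (or on the boundary) of every $\triangle_{(\sigma,\tau_0,\tau_1)}(i_1,\dots,i_n)$, so $(\alpha,\beta)$ has $(i_1,i_2,i_3,\dots)$ as its Farey-$(\sigma,\tau_0,\tau_1)$ TRIP sequence. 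Likewise $(r,s)$ has $(i_1,i_2,i_3,\dots)$ as its Barycentric-$(\sigma,\tau_0,\tau_1)$ TRIP sequence. The equality $\Phi(\sigma,\tau_0,\tau_1)(\alpha,\beta)=(r,s)$ is then almost immediate from the definition of $\Phi$: since $(\alpha,\beta)$ lies in each $\triangle_{(\sigma,\tau_0,\tau_1)}(i_1,\dots,i_n)$ and the $\Gamma$-triangles share the same index sequence, $\Phi(\sigma,\tau_0,\tau_1)(\alpha,\beta)=\lim_{n\to\infty}\Gamma_{(\sigma,\tau_0,\tau_1)}(i_1,\dots,i_n)=(r,s)$. (If $(\alpha,\beta)$ happens to land on a boundary for some finite $n$, the boundary convention in the definition still yields the corresponding boundary point of $\Gamma$, so the conclusion is unchanged.)

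Next I would handle the two algebraicity claims. For the cubic claim on $(\alpha,\beta)$, the hypothesis is exactly the hypothesis of Theorem \ref{cubic} once one notes that a convergent nested sequence of triangles forces the TRIP sequence to be eventually periodic precisely when $(\alpha,\beta)$ is a fixed point of the associated Gauss-type map iterate — but actually the cleanest route is simply to invoke Theorem \ref{cubic} directly: if the sequence $(i_1,i_2,\dots)$ is eventually periodic and the nested triangles converge to the point $(\alpha,\beta)$, then $\alpha,\beta$ lie in a common number field of degree $\le 3$. The subtlety here is that the statement of the theorem as written does not explicitly say $(i_1,i_2,\dots)$ is eventually periodic; so I would either add that as a tacit hypothesis or, more honestly, note that the proof technique of Theorem 6.2 of \cite{tripmaps} shows that whenever a nested sequence of Farey triangles converges to a point that is the fixed point of a finite product of the $F_0,F_1$ matrices, one gets the eigenvector/cubic conclusion — and convergence to a single point is what makes the limit point an eigenvector of the periodic word's matrix product. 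For the rationality claim on $(r,s)$, I would reproduce the argument sketched right after Theorem \ref{rational}: the matrices $G_0(\sigma,\tau_0,\tau_1)$ and $G_1(\sigma,\tau_0,\tau_1)$ are Markov (column sums equal to one, since we replaced the $1$'s in the third column of $F_0,F_1$ by $1/2$'s and the other columns were already standard basis vectors), so any finite product is Markov, and the $\pi$-image of the eigenvector of a rational (indeed Markov) matrix is a pair of rational numbers; convergence of the nested $\Gamma$-triangles to the single point $(r,s)$ identifies $(r,s)$ (after applying the integer change of basis $A$) with such an eigenvector.

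The main obstacle, as I see it, is the bookkeeping around eventual periodicity: the theorem as displayed asserts the cubic/rational dichotomy for an arbitrary sequence $(i_1,i_2,\dots)$ with convergent nested triangles, but Theorems \ref{cubic} and \ref{rational} are stated for \emph{eventually periodic} sequences. So the real content I would need to pin down is that convergence of the nested triangles to a single point, together with the fact that both the Farey and Barycentric triangles are cut out by the \emph{same} finite set of matrices indexed by $0$ and $1$, is enough to run the eigenvector argument even without periodicity — or, failing that, I would restrict the theorem's hypothesis to eventually periodic sequences (which is surely the intended reading, matching ``The classical Minkowski question mark function sends quadratic irrationals to rational numbers''). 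Once that point is settled, everything else is a direct citation of the two earlier theorems plus the unwinding of the definition of $\Phi$, and the Markov-matrix observation for the rationality of $(r,s)$, none of which requires new computation.
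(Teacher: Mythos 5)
Your proposal matches the paper's own argument: the paper's entire proof is the single sentence ``This follows immediately from Theorem \ref{cubic} and Theorem \ref{rational},'' and your unwinding of the definition of $\Phi$ plus the citation of those two theorems (with the Markov-matrix observation for rationality) is exactly what is intended. Your point that eventual periodicity of $(i_1,i_2,\dots)$ must be a tacit hypothesis is a genuine catch --- the statement as printed omits it, and without it the cubic/rational conclusions fail for generic non-periodic sequences --- so your proposed repair (read the theorem as restricted to eventually periodic sequences) is the correct one.
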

This follows immediately from Theorem \ref{cubic} and Theorem \ref{rational}.

\section{Definition of Singularness} \label{definition}
Since our analogs of the Minkowski Question Mark function need not be functions but instead could be relations, we need to exercise some care in the definition of singularity.

\begin{definition}\label{trad} A  relation $f:\triangle\to\triangle$ is singular with respect to the Lebesgue measure $\lambda$ if there exists a set $A$ in the domain of $f$ such that $\lambda(A)=\lambda(\triangle)$ while $\lambda(f(A))=0$.
\end{definition}

\section{Reduction of Cases} \label{reduction}
From the above we see that for each of the 216 different TRIP maps there is only one reasonable candidate for an analog for the Minkowski question mark function.  Thus in principle, we could examine each of these 216 different maps and see which ones are singular.  Of course, this hardly seems worth the effort.  Luckily we can  place these 216 maps into 15 classes whose associated generalization of the question mark function is related by a linear transformation and show for five of these classes that this function is singular.

\subsection{Twins}
The first reduction comes from the fact that no $(\sigma, \tau_0, \tau_1)$   uniquely partitions $\triangle$.  Instead, every TRIP sequence  has a "twin" that is essentially just the same with matrices reversed.    Then the $\Phi$ defined from a TRIP map will be the same as the $\Phi$ defined from it's twin.

\begin{lemma}
\label{pairing} 
$F_{\sigma,\tau_0,\tau_1}$ and $F_{\sigma(13),(12)\tau_1,(12)\tau_0}$ give the same partition of $\triangle$.
\end{lemma}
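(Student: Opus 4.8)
The lemma asserts that two different parameter triples—$(\sigma,\tau_0,\tau_1)$ and $(\sigma(13),(12)\tau_1,(12)\tau_0)$—produce the same partition of $\triangle$. I should note first that the statement as written is slightly informal (writing "$F_{\sigma,\tau_0,\tau_1}$ gives a partition" when it is really the family of all $\triangle_{(\sigma,\tau_0,\tau_1)}(i_1,\dots,i_n)$ that does), so my proof should clarify this and then establish the correspondence at the level of the nested triangles. The natural approach is purely computational: track what the matrices $F_0$, $F_1$ do under the substitutions, using the fact that permutation matrices act on the column triple $(v_1,v_2,v_3)$ by permuting columns.

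**The plan.** The plan is to show that for every finite word $w=(i_1,\dots,i_n)$ there is a companion word $w'$ such that $\triangle_{(\sigma,\tau_0,\tau_1)}(w)=\triangle_{(\sigma(13),(12)\tau_1,(12)\tau_0)}(w')$, and that $w\mapsto w'$ is a bijection on $\{0,1\}^n$ preserving the containment structure; that immediately gives equality of the partitions and hence equality of the limit objects $\Phi$. The guess is that $w'$ is obtained from $w$ by swapping every $0$ and $1$ (this is why the lemma swaps $\tau_0$ and $\tau_1$), so concretely I would prove the single key identity
\[
V\,F_{i_1}(\sigma,\tau_0,\tau_1)\cdots F_{i_n}(\sigma,\tau_0,\tau_1)
= V\,F_{1-i_1}(\sigma(13),(12)\tau_1,(12)\tau_0)\cdots F_{1-i_n}(\sigma(13),(12)\tau_1,(12)\tau_0)\cdot P
\]
for a fixed permutation matrix $P$ (independent of $w$), since right-multiplication by a permutation only reorders the three column-vertices and hence does not change the triangle $\pi(\cdot)$ of the image. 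Unwinding the definitions $F_0(\sigma,\tau_0,\tau_1)=\sigma A_0\tau_0$ and $F_1(\sigma,\tau_0,\tau_1)=\sigma A_1\tau_1$, this reduces to a telescoping check: I need $A_0$ and $A_1$ to satisfy something like $(13)A_1(12)=A_0\cdot(\text{perm})$ and $(13)A_0(12)=A_1\cdot(\text{perm})$ with compatible permutations so the interior factors cancel pairwise. The honest version of this is a two-line matrix multiplication with the explicit $3\times 3$ matrices $F_0,F_1$ and the transpositions $(12),(13)$ given in the paper; I would carry it out once for the length-one case and then induct.

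**Carrying it out.** First I would recall the explicit matrices and compute $(13)F_1(12)$ and $(13)F_0(12)$ directly, identifying each with $F_0$ or $F_1$ postcomposed by a suitable permutation matrix. Second, I would set up the induction: assuming the identity holds for words of length $n$, append one more letter and use the established length-one relation together with the fact that a trailing permutation matrix $P$ absorbs into the next factor as $P\,F_j(\ldots) = F_{j'}(\ldots)\,P'$ (again by direct computation on $A_0,A_1$). Third, I would conclude that the assignment $(i_1,\dots,i_n)\mapsto(1-i_1,\dots,1-i_n)$ is the bijection matching the two families of subtriangles, that it respects nesting (since bit-flipping commutes with taking prefixes), and therefore the two Farey partitions $\mathcal{F}_n$ agree for all $n$; passing to the limit, the two maps $\Phi$ coincide as relations.

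**Main obstacle.** The only real subtlety—the part I'd be most careful about—is bookkeeping the trailing permutation matrix $P$ correctly through the induction: I need to verify that $P$ is truly the same permutation at every step (or at least that whatever permutation appears always stabilizes $\pi$, i.e. always permutes the three columns without altering which triangle they span), and that the specific conjugation pattern $\sigma\mapsto\sigma(13)$, $\tau_0\leftrightarrow\tau_1$ with the $(12)$ insertions is exactly what makes the middle permutations telescope away. This is genuinely just a finite check, but it is the place where a wrong sign or a transposed factor would break the argument, so the write-up should display the one or two crucial matrix products explicitly rather than wave at them.
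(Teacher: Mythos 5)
Your proposal is correct and follows essentially the same route as the paper: the whole lemma reduces to the two matrix identities $(13)F_0(12)=F_1$ and $(13)F_1(12)=F_0$, which give $F_{i}(\sigma,\tau_0,\tau_1)=F_{1-i}(\sigma(13),(12)\tau_1,(12)\tau_0)$ termwise, so the partitions agree under the bit-flip $w\mapsto w'$. The one simplification you would discover in carrying it out is that your trailing permutation $P$ is the identity, so no telescoping bookkeeping is actually needed.
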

\begin{proof}
Recall our matrices $F_0$ and $F_1$.
Then we calculate $(13)F_0(12)$ and $(13)F_1(12)$:
\begin{align*}
(13)F_0(12)=\left(\begin{array}{ccc}
0&0&1\\
0&1&0\\
1&0&0\end{array}\right)\left(\begin{array}{ccc}
0&0&1\\
1&0&0\\
0&1&1\end{array}\right)\left(\begin{array}{ccc}
0&1&0\\
1&0&0\\
0&0&1\end{array}\right)&=\left(\begin{array}{ccc}
1&0&1\\
0&1&0\\
0&0&1\end{array}\right)=F_1\\
(13)F_1(12)=\left(\begin{array}{ccc}
0&0&1\\
0&1&0\\
1&0&0\end{array}\right)\left(\begin{array}{ccc}
1&0&1\\
0&1&0\\
0&0&1\end{array}\right)\left(\begin{array}{ccc}
0&1&0\\
1&0&0\\
0&0&1\end{array}\right)&=\left(\begin{array}{ccc}
0&0&1\\
1&0&0\\
0&1&1\end{array}\right)=F_0
\end{align*}
Thus the Farey partition $\mathcal{F}(\sigma,\tau_0,\tau_1 )$ is the same as the Farey partition for $\mathcal{F}( \sigma (13),(12) \tau_1,(12) \tau_0)$.  

For example, we would have 
$$\triangle_{  (\sigma,\tau_0,\tau_1 ) } ( 1,1,0,0,0,1) = \triangle_{ ( \sigma (13) , (12) \tau_1,(12)\tau_0)} (0,0,1,1,1,0).$$
\end{proof}
The twin phenomenon comes down to what we could have labeled each  $F_0$ matrix as the $F_1$ matrix and each $F_1$ matrix as $F_0$, and get the same partitioning. 

With this pairing, this reduces the number of cases we need to check to 108.

\subsection{Reduction  to $21$ cases}
We can further reduce the number of cases by grouping permutations into classes whose partitions of $\triangle$ are related by a linear transformation.

Recall that our original triangle was represented by the vectors $(1,0,0), (1,1,0),$ and $(1,1,1)$ in $\R^3$.  These define a cone in $\R^3$, and each TRIP sequence provides a systematic way of partitioning this cone into smaller and smaller subcones through linear combinations of these initial vectors.  We translate this cone partition into a triangle partition by projecting onto the plane $x_1=1$.  We can similarly think of our barycentric partitioning in the same way.  However, our initial choice of cone shouldn't matter; we should be able to define our cone using any three linearly independent vectors in $\Z^3$ and define our projection map onto the plane defined by the endpoints of these vectors.  Because moving from one cone and projection map to another can be described by a linear transformation, this means that if $\Phi    ( \sigma,\tau_0, \tau_1)$ is singular for the original multidimensional fraction defined on $\triangle$ by the matrix $V$, it will be singular on any cone defined by the matrix $MV$ where $M$ is the linear transformation the sends our original cone to the new one.

\begin{theorem} 
\label{ordering}
If the permutation $(\sigma,\tau_0,\tau_1)$ gives rise to a singular function from $\triangle\to\triangle$, then, for any triangle $\triangle'$ with vertices given in projective coordinates by the matrix $V'=(v_1, v_2, v_3) $,  this permutation gives rise to a singular function from $\triangle'\to\triangle'$ under the same definition.
\end{theorem}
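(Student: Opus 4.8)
The plan is to realize the passage from $\triangle$ to $\triangle'$ as an honest change of projective coordinates, to show that both the Farey and the Barycentric partition families, and hence the question mark relation, transform covariantly under it, and then to push the singularity witness set across. Write $M = V'V^{-1}\in GL_3(\R)$, so that $V' = MV$. Since $\pi$ sends a ray in $\R^3$ to a point of $\R^2$, the linear map $M$ descends to $\psi = \pi\circ M\circ\pi^{-1}$, which is a well-defined bijection $\triangle\to\triangle'$ provided $\triangle'$ is a genuine bounded non-degenerate triangle (so that $M$ carries no point of the closed cone over $\triangle$ into the plane $x_1=0$). Explicitly $\psi$ has the form $(x,y)\mapsto\bigl(L_1(x,y)/L_0(x,y),\,L_2(x,y)/L_0(x,y)\bigr)$ for affine forms $L_0,L_1,L_2$ with $L_0$ nowhere vanishing on the compact set $\overline{\triangle}$; hence $\psi$ extends to a $C^\infty$ diffeomorphism of a neighbourhood of $\overline{\triangle}$ onto a neighbourhood of $\overline{\triangle'}$, its Jacobian determinant ($\det M$ divided by $L_0^3$, up to sign) is continuous and nonzero, hence pinched between two positive constants, on $\overline{\triangle}$, and the same holds for $\psi^{-1}$.

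Next I would record the covariance of the two partition families. For any finite word $(i_1,\dots,i_n)$ the products $F_{i_1}(\sigma,\tau_0,\tau_1)\cdots F_{i_n}(\sigma,\tau_0,\tau_1)$ and $G_{i_1}(\sigma,\tau_0,\tau_1)\cdots G_{i_n}(\sigma,\tau_0,\tau_1)$ are fixed matrices that do not depend on the ambient cone, so, using $V'=MV$ and $\pi\circ M=\psi\circ\pi$,
\[\triangle'_{(\sigma,\tau_0,\tau_1)}(i_1,\dots,i_n)=\pi\big(V'F_{i_1}(\sigma,\tau_0,\tau_1)\cdots F_{i_n}(\sigma,\tau_0,\tau_1)\big)=\psi\big(\triangle_{(\sigma,\tau_0,\tau_1)}(i_1,\dots,i_n)\big),\]
and likewise $\Gamma'_{(\sigma,\tau_0,\tau_1)}(i_1,\dots,i_n)=\psi\big(\Gamma_{(\sigma,\tau_0,\tau_1)}(i_1,\dots,i_n)\big)$. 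Since $\psi$ is a homeomorphism of $\overline{\triangle}$ onto $\overline{\triangle'}$ carrying line segments to line segments, it commutes with the nested intersections that define the question mark relation; consequently $\Phi'(\sigma,\tau_0,\tau_1)(\psi(x,y))=\psi\big(\Phi(\sigma,\tau_0,\tau_1)(x,y)\big)$ for every $(x,y)\in\triangle$, that is, $\Phi'(\sigma,\tau_0,\tau_1)=\psi\circ\Phi(\sigma,\tau_0,\tau_1)\circ\psi^{-1}$ as relations on $\triangle'$.

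Finally I would transport the singularity. Suppose $A\subseteq\triangle$ witnesses singularity of $\Phi(\sigma,\tau_0,\tau_1)$, so $\lambda(A)=\lambda(\triangle)$ and $\lambda\big(\Phi(\sigma,\tau_0,\tau_1)(A)\big)=0$, and set $A'=\psi(A)\subseteq\triangle'$. A diffeomorphism whose Jacobian is bounded away from $0$ and $\infty$ on a compact set sends Lebesgue-null sets to Lebesgue-null sets and back, so $\triangle'\setminus A'=\psi(\triangle\setminus A)$ is null, giving $\lambda(A')=\lambda(\triangle')$, while $\Phi'(\sigma,\tau_0,\tau_1)(A')=\psi\big(\Phi(\sigma,\tau_0,\tau_1)(A)\big)$ is null for the same reason. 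By Definition \ref{trad} this exhibits $\Phi'(\sigma,\tau_0,\tau_1)$ as singular, which is the assertion.

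I expect the only step with genuine content — and the one I would write out carefully — to be the claim that $\psi$ is bi-Lipschitz on $\overline{\triangle}$ (equivalently, a $C^1$ diffeomorphism whose Jacobian is pinched between two positive constants there), since that is exactly what forces both ``full Lebesgue measure'' and ``Lebesgue measure zero'' to be preserved under $\psi$ and $\psi^{-1}$. This rests only on the hypothesis that $\triangle'$ is a bona fide bounded triangle, so that the denominator form $L_0$ does not vanish on $\overline{\triangle}$; were one to allow a matrix $V'$ for which the projected region is unbounded or degenerate, the statement would need reinterpretation. Everything else — covariance of the Farey and Barycentric partitions and the resulting conjugacy $\Phi'=\psi\,\Phi\,\psi^{-1}$ — is a formal consequence of $V'=MV$ together with the intertwining identity $\pi\circ M=\psi\circ\pi$.
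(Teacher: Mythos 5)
Your proposal is correct and is exactly the argument the paper has in mind: the paper's entire proof is the sentence ``The proof is straightforward linear algebra,'' and what you have written out — the projective change of coordinates $\psi$ induced by $V'=MV$, the covariance of the Farey and Barycentric partitions, and the fact that a diffeomorphism with Jacobian pinched between positive constants on $\overline{\triangle}$ preserves null and full-measure sets in both directions — is the careful version of that one-liner. Your remark that the claim needs $\triangle'$ to be a genuine bounded non-degenerate triangle (so the denominator form $L_0$ is nonvanishing on $\overline{\triangle}$) is a legitimate hypothesis the paper leaves implicit.
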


\begin{proof}
The proof is straightforward linear algebra.
\end{proof}

This gives us the following result.
\begin{corollary} 
\label{permute}
If the $\Phi(\sigma,\tau_0,\tau_1)$  is singular, then so is the $\Phi (\rho\sigma, \tau_0 \rho^{-1},\tau_1 \rho^{-1})$, for any $\rho \in S_3.$
\end{corollary}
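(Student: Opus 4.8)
The plan is to deduce the corollary directly from Theorem~\ref{ordering} by showing that conjugating the triple $(\sigma,\tau_0,\tau_1)$ by a permutation $\rho$ has exactly the same effect on the Farey and Barycentric partitions as replacing the initial cone matrix $V$ by $V\rho$. Since changing the initial cone cannot affect singularity (that is the content of Theorem~\ref{ordering}), this will finish the proof.

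First I would record the key algebraic identity. With $A_0,A_1$ the two base matrices from the Definition, for each $i\in\{0,1\}$ we have
$$F_i(\rho\sigma,\ \tau_0\rho^{-1},\ \tau_1\rho^{-1}) = (\rho\sigma)\,A_i\,(\tau_i\rho^{-1}) = \rho\,(\sigma A_i\tau_i)\,\rho^{-1} = \rho\, F_i(\sigma,\tau_0,\tau_1)\,\rho^{-1},$$
and in exactly the same way $G_i(\rho\sigma,\tau_0\rho^{-1},\tau_1\rho^{-1}) = \rho\, G_i(\sigma,\tau_0,\tau_1)\,\rho^{-1}$. Hence for any finite word $(i_1,\dots,i_n)$ the conjugating factors cancel in pairs, giving
$$V\,F_{i_1}(\rho\sigma,\tau_0\rho^{-1},\tau_1\rho^{-1})\cdots F_{i_n}(\rho\sigma,\tau_0\rho^{-1},\tau_1\rho^{-1}) = (V\rho)\,F_{i_1}(\sigma,\tau_0,\tau_1)\cdots F_{i_n}(\sigma,\tau_0,\tau_1)\,\rho^{-1},$$
and the identical formula with every $F$ replaced by $G$.

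Next I would apply $\pi$. Right multiplication by the permutation matrix $\rho^{-1}$ merely permutes the columns of a $3\times 3$ matrix, so it permutes the list of vertices of a subtriangle without changing it as a subset of $\triangle$; crucially, it is the \emph{same} permutation of columns that is applied to the Farey triangle and to the Barycentric triangle. Therefore $\triangle_{(\rho\sigma,\tau_0\rho^{-1},\tau_1\rho^{-1})}(i_1,\dots,i_n)$ and $\Gamma_{(\rho\sigma,\tau_0\rho^{-1},\tau_1\rho^{-1})}(i_1,\dots,i_n)$ are precisely the Farey and Barycentric partition triangles produced by the triple $(\sigma,\tau_0,\tau_1)$ when the construction is run from the cone matrix $V\rho$ in place of $V$, and the vertex-to-vertex correspondence defining $\Phi_n$ is unchanged because $\rho^{-1}$ reindexes both partitions the same way. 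Passing to the limit, the relation $\Phi(\rho\sigma,\tau_0\rho^{-1},\tau_1\rho^{-1})$ built on $\triangle$ coincides with the relation $\Phi(\sigma,\tau_0,\tau_1)$ built on the triangle $\triangle'=\pi(V\rho)$ from the cone matrix $V\rho$.

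Finally, $V\rho$ is again an invertible integer matrix whose first row is $(1,1,1)$, so Theorem~\ref{ordering} applies and gives that singularity of $\Phi(\sigma,\tau_0,\tau_1)$ on its original triangle forces singularity of the same map built from the cone $V\rho$; together with the identification above this yields singularity of $\Phi(\rho\sigma,\tau_0\rho^{-1},\tau_1\rho^{-1})$. I expect the whole argument to be essentially mechanical; the only step requiring genuine care — and the thing I would check most carefully — is that the leftover right factor $\rho^{-1}$ is harmless, i.e.\ that reindexing every subtriangle's vertices by a fixed permutation alters neither the geometric partition nor the vertex-matching rule used to build each $\Phi_n$. Since the same $\rho^{-1}$ multiplies the $F$-products and the $G$-products, this is immediate, but it is what makes the reduction to Theorem~\ref{ordering} legitimate rather than merely formal.
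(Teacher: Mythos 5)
Your argument is correct and is exactly the paper's intended proof: the paper's one-line justification ("these two partitions only differ in the ordering of the initial vertices of $\triangle$") is precisely your observation that the conjugating factors telescope to leave $(V\rho)F_{i_1}\cdots F_{i_n}\rho^{-1}$, with the trailing $\rho^{-1}$ only reindexing vertices, after which Theorem~\ref{ordering} applies with $V'=V\rho$. You have simply spelled out the algebra the paper leaves implicit.
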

\begin{proof}
Note that these two partitions only different in the ordering of the initial vertices of $\triangle$.  Then this result follows directly from Theorem \ref{ordering}.
\end{proof}

By this corollary, we can greatly reduce the number of cases that we need to check.  We will informally define a class of permutations to be those related by a combination of Lemma \ref{pairing} or Corollary \ref{permute}.  By Corollary \ref{permute}, we know that there will always be a permutation of the form $(e,\tau_0,\tau_1)$ in each class, which means there are at most 36 classes to check.  We would expect that by Lemma \ref{pairing} we could reduce this number to 18.  However, we find that in several cases, the twin of $(e,\tau_0,\tau_1)$ is on the list of the six permutation triples given by \ref{permute}, so instead we find 21 classes, represented by the following permutations:
\begin{align*}
&(e,e,e)&&(e,e,12)&&(e,e,13)&&(e,e,23)&&(e,e,123)*&&(e,e,132)&&(e,12,e)\\
&(e,12,12)&&(e,12,13)*&&(e,12,23)&&(e,12,132)&&(e,13,e)&&(e,13,12)*&&(e,13,23)\\
&(e,13,132)&&(e,23,e)&&(e,23,23)*&&(e,23,132)&&(e,123,e)*&&(e,123,132)&&(e,132,132)*\\
\end{align*}
The starred permutations represent classes with 6 maps, while the unstarred ones represent classes with 12 maps. 

\subsection{Reduction to 15 Cases}
We now reduce the above 21 cases to just 15.  

We present the following lemma from \cite{tripmaps}

\begin{lemma}\label{monk1} The TRIP maps  $F_{e,(23),(23)},$ $F_{e,(23),(132)},$ $F_{e,(132),(23)},$ $F_{e,(132),(132)},$ $F_{(13),(132),(132)},$ $F_{(13),(23),(132)},$ $F_{(13),(132),(23)},$ and $F_{(13),(23),(23)}$ all give the same partition of $\triangle$.
\end{lemma}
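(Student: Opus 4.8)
The plan is to reduce the claim to a small number of matrix identities, exactly mirroring the proof of Lemma \ref{pairing}. Recall that two triples $(\sigma,\tau_0,\tau_1)$ and $(\sigma',\tau_0',\tau_1')$ give the same Farey partition of $\triangle$ as soon as the pair of matrices $\{F_0(\sigma,\tau_0,\tau_1),F_1(\sigma,\tau_0,\tau_1)\}$ coincides, as an \emph{unordered} pair, with $\{F_0(\sigma',\tau_0',\tau_1'),F_1(\sigma',\tau_0',\tau_1')\}$. Indeed, the subtriangle $\triangle_{(\sigma,\tau_0,\tau_1)}(i_1,\dots,i_n)=\pi(VF_{i_1}\cdots F_{i_n})$ depends only on the set of available matrices (when the two matrices agree exactly, the partition is literally identical; when they are swapped, we relabel the index strings by flipping all bits, as in Lemma \ref{pairing}). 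So it suffices to show that all eight triples in the statement produce the same unordered pair $\{\sigma A_0\tau_0,\ \sigma A_1\tau_1\}$, where $A_0=F_0$ and $A_1=F_1$ in the notation of the definition.

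First I would handle the four triples with $\sigma=e$: namely $(e,(23),(23))$, $(e,(23),(132))$, $(e,(132),(23))$, $(e,(132),(132))$. Here the relevant matrices are $F_0\tau_0$ with $\tau_0\in\{(23),(132)\}$ and $F_1\tau_1$ with $\tau_1\in\{(23),(132)\}$. I would compute the four matrices $F_0(23)$, $F_0(132)$, $F_1(23)$, $F_1(132)$ by hand — each is a single $3\times 3$ multiplication — and check that $F_0(23)=F_1(132)$ and $F_0(132)=F_1(23)$. Granting these two identities, the unordered pair $\{F_0\tau_0,F_1\tau_1\}$ is the \emph{same} two-element set for every one of the four combinations of $(\tau_0,\tau_1)$: in each case it equals $\{F_0(23),F_0(132)\}=\{F_1(132),F_1(23)\}$. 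That collapses the first four triples into one partition. The identities $F_0(23)=F_1(132)$, $F_0(132)=F_1(23)$ are precisely the kind of computation already performed repeatedly in the excerpt, so I would just state the resulting matrices and the two equalities.

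Next I would pass from $\sigma=e$ to $\sigma=(13)$, covering the remaining four triples $((13),(132),(132))$, $((13),(23),(132))$, $((13),(132),(23))$, $((13),(23),(23))$. The cleanest route is to invoke Lemma \ref{pairing} (the ``twins'' lemma): $(\sigma,\tau_0,\tau_1)$ and $(\sigma(13),(12)\tau_1,(12)\tau_0)$ give the same partition. Applying this with $\sigma=e$ sends each of the first four triples to a triple with first entry $(13)$ and second/third entries of the form $(12)\tau_1$, $(12)\tau_0$ with $\tau_0,\tau_1\in\{(23),(132)\}$. I would compute $(12)(23)$ and $(12)(132)$ — two more short products in $S_3$ — and observe that $\{(12)(23),(12)(132)\}=\{(123),(132)\}$ or whatever the computation yields; the point is simply to check that the four images are exactly the four triples $((13),(132),(132))$, $((13),(23),(132))$, $((13),(132),(23))$, $((13),(23),(23))$ in some order. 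Since each first-four triple is twinned to one of the last-four triples, and all four of the former give a common partition, all eight give that same partition.

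The main obstacle is purely bookkeeping: making sure the twin map lands on exactly the four $(13)$-triples listed, with the right correspondence, and that the small permutation products $(12)(23)$, $(12)(132)$ are computed correctly (it is easy to mix up left versus right multiplication conventions). There is no conceptual difficulty — everything reduces to the two matrix identities $F_0(23)=F_1(132)$, $F_0(132)=F_1(23)$ together with a consistency check that the twin involution permutes the eight triples in the claimed way. If one prefers to avoid Lemma \ref{pairing}, the alternative is to directly verify the eight unordered pairs $\{\sigma F_0\tau_0,\ \sigma F_1\tau_1\}$ are all equal by brute-force multiplication; this is longer but entirely mechanical, and I would mention it as a fallback.
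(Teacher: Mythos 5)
Your reduction to the two matrix identities $F_0\,(23)=F_1\,(132)$ and $F_0\,(132)=F_1\,(23)$ does not work: both identities are false. Computing directly,
\[
F_0\,(23)=\left(\begin{array}{ccc}0&1&0\\1&0&0\\0&1&1\end{array}\right),\qquad
F_1\,(132)=\left(\begin{array}{ccc}0&1&1\\1&0&0\\0&1&0\end{array}\right),
\]
and these are not equal (nor are they column permutations of one another: applied to $V=(v_1,v_2,v_3)$ the first produces the subtriangle $\{v_2,v_3,v_1+v_3\}$ and the second produces $\{v_1,v_2,v_1+v_3\}$). So the eight triples do \emph{not} share a common unordered pair of matrices, and the mechanism that powers Lemma \ref{pairing} (where the two matrices are literally swapped) is not available here. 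What is actually true is weaker: $F_0\,(23)$ and $F_0\,(132)$ cut off the \emph{same subtriangle} but list its vertices in different orders, and likewise for $F_1\,(23)$ and $F_1\,(132)$. Since the vertex ordering of $VF_{i_1}\cdots F_{i_n}$ governs how the \emph{next} subdivision is performed, agreement of the first-level partitions does not by itself propagate; an additional argument is needed, and that is the real content of the lemma.

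The paper supplies this missing step by induction on the depth of the subdivision: it observes that for every one of the four $\sigma=e$ maps, the new vertex of $\triangle(i_1,\dots,i_k)$ is always the sum of the two vertices of $\triangle(i_1,\dots,i_{k-1})$ that were \emph{not} new at the previous step. This description is intrinsic (independent of how the matrices happen to order the columns), so once the partitions agree at one level they agree at the next, even though the matrices themselves disagree. Your proposal is missing exactly this idea, and the fallback you mention (brute-force checking that the eight unordered pairs of matrices are equal) would also fail, since they are not equal. The second half of your argument — transporting the result from the four $\sigma=e$ triples to the four $\sigma=(13)$ triples via Lemma \ref{pairing} — is correct and is the same step the paper takes.
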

\begin{proof} The permutation $(e,23,23)$ gives matrices:
\begin{align*}
F_0&=\left(\begin{array}{ccc}
0&1&0\\
1&0&0\\
0&1&1\end{array}\right)&F_1&=\left(\begin{array}{ccc}
1&1&0\\
0&0&1\\
0&1&0\end{array}\right)
\end{align*}
The permutation $(e,23,132)$ gives matrices:
\begin{align*}
F_0&=\left(\begin{array}{ccc}
0&1&0\\
1&0&0\\
0&1&1\end{array}\right)&F_1&=\left(\begin{array}{ccc}
0&1&1\\
1&0&0\\
0&1&0\end{array}\right)
\end{align*}
The permutation $(e,132,23)$ gives matrices:
\begin{align*}
F_0&=\left(\begin{array}{ccc}
0&1&0\\
0&0&1\\
1&1&0\end{array}\right)&F_1&=\left(\begin{array}{ccc}
1&1&0\\
0&0&1\\
0&1&0\end{array}\right)
\end{align*}
The permutation $(e,132,132)$ gives matrices:
\begin{align*}
F_0&=\left(\begin{array}{ccc}
0&1&0\\
0&0&1\\
1&1&0\end{array}\right)&F_1&=\left(\begin{array}{ccc}
0&1&1\\
1&0&0\\
0&1&0\end{array}\right)
\end{align*}
For the base case, we note that the new vertex of $\triangle(i_1)$ is on the edge between $(1,0,0)$ and $(1,1,0)$ for each map. For the inductive step $v_{k-1,1}$ is the unique vertex of $\triangle_{(\sigma,\tau_0,\tau_1)}(i_1,\dots,i_{k-1})$ which is not a vertex of $\triangle_{(\sigma,\tau_0,\tau_1)}(i_1,i_2,\dots,i_{k-2})$.  Because the new vertex of $\triangle_{(\sigma,\tau_0,\tau_1)}(i_1,i_2,\dots,i_k)$ is $v_{k-1,2}+v_{k-1,3}$ for either choice of $i_k$, and this is the sum of two vertices that were not new at the previous step, each map gives the same partition.  Thus each of these four maps gives the same partition of $\triangle$.  The remaining four maps come from applying Lemma \ref{pairing} to these four maps.
\end{proof}

This means that proving singularity for any of these eight classes  proves singularity for the remaining seven. The list of 21 class from the previous subsection includes three of these classes: $(e, 23, 23), (e, 23, 132)$ and $(e, 132, 132)$.  Thus we can delete two of them, reducing our work to checking 19 classes.  

 Further, from \cite{tripmaps} we know that the M\"{o}nkemeyer map is the same as $T_{(e,132,23)}$ in two dimensions and it is the map used by Panti in \cite{panti} to derive his generalization of the Minkowski function, so we will call the family of maps represented by $(e,23,132)$ "M\'{o}nkemeyer-type maps."

This next lemma deal with maps that partition $\triangle$ into subtriangles that all contain the vertex $(1,0)$ of $\triangle$, so in essence what they do is provide a Farey partitioning of the hypotenuse of $\triangle$. We will use the notation that $\triangle_{n,F}$ refers to the set of all $\triangle(i_1,i_2,\dots,i_n)$ for a given TRIP map.

\begin{lemma} The TRIP maps $F_{e,(12),e},$ $F_{e,(123),e},$ $F_{e,(12),(13)},$ $F_{e,(123),(13)},$ $F_{(13),(12),e},$ $F_{(13),(12),(13)},$ $F_{(13),(123),e},$ and $F_{(13),(123),(13)}$ all give the same partition of $\triangle$.
\end{lemma}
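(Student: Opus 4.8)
The plan is to imitate the proof of Lemma~\ref{monk1}. The first step is to cut the eight triples down to the four with $\sigma=e$, namely $(e,(12),e)$, $(e,(123),e)$, $(e,(12),(13))$ and $(e,(123),(13))$: a one-line check with Lemma~\ref{pairing} shows that the remaining four, $((13),(12),e)$, $((13),(12),(13))$, $((13),(123),e)$ and $((13),(123),(13))$, are exactly the twins $(\sigma(13),(12)\tau_1,(12)\tau_0)$ of these four, so by that lemma each induces the same partition of $\triangle$ as its twin. Hence it suffices to prove that the four base triples give the same partition.

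For each of the four base triples I would write out the generating matrices $F_0(\sigma,\tau_0,\tau_1)=\sigma A_0\tau_0$ and $F_1(\sigma,\tau_0,\tau_1)=\sigma A_1\tau_1$. The only feature of the resulting eight matrices I need is that in every case the second column is $(0,1,0)^{T}$, while the first and third columns are, in some order, $(1,0,1)^{T}$ and one of $(1,0,0)^{T}$, $(0,0,1)^{T}$. Writing $(w_1,w_2,w_3)=VF_{i_1}\cdots F_{i_n}$ for the projective vertices of $\triangle_{(\sigma,\tau_0,\tau_1)}(i_1,\dots,i_n)$, this says that $F_0$ and $F_1$ both fix the middle vertex $w_2$, and that one of $F_0,F_1$ sends the unordered pair $\{w_1,w_3\}$ to $\{w_1,\,w_1+w_3\}$ while the other sends it to $\{w_1+w_3,\,w_3\}$. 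Starting from $(v_1,v_2,v_3)$, an immediate induction then gives, for each of the four base maps, that every $\triangle_{(\sigma,\tau_0,\tau_1)}(i_1,\dots,i_n)$ has $\pi(v_2)=(1,0)$ as a vertex (so it contains that vertex of $\triangle$) and has its other two vertices on the hypotenuse $H=[\pi(v_1),\pi(v_3)]=[(0,0),(1,1)]$. In other words every subtriangle, at every level, is the cone with apex $(1,0)$ over a subsegment of $H$, and the partition is entirely encoded by the induced subdivision of $H$.

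The argument then finishes with an induction on $n$: the collection of level-$n$ subtriangles consists of the two children of each level-$(n-1)$ subtriangle, and we have just seen that, for each of the four maps, the two children of the cone from $(1,0)$ over a segment with endpoints $p,q$ are the cones over $[p,\,p+q]$ and $[p+q,\,q]$. That is the mediant split, which is exactly the recursion producing the successive Farey subdivisions of an interval (with the endpoints $\pi(v_1)$, $\pi(v_3)$ of $H$ in the roles of $0$ and $1$ from the Introduction). Therefore, for each of the four base maps, the level-$n$ partition is the collection of the $2^{n}$ cones from $(1,0)$ over the $2^{n}$ intervals of the $n$th Farey subdivision of $H$ — and this description does not mention which base map we used. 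So the four base maps give the same partition of $\triangle$, and with the twin reduction of the first paragraph, so do all eight.

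The one point that needs care — and the only place this is genuinely more delicate than Lemma~\ref{monk1} — is that these eight maps do \emph{not} attach the same subtriangle to a given symbol string $(i_1,\dots,i_n)$: each map lists the two children of a subtriangle in its own order, so a naive index-by-index comparison of the $\triangle_{(\sigma,\tau_0,\tau_1)}(i_1,\dots,i_n)$ already breaks down at level two (for example $\triangle_{(e,(12),e)}(0,0)\neq\triangle_{(e,(123),e)}(0,0)$). The remedy is to run the induction on the \emph{unordered} collection of level-$n$ subtriangles, which is precisely what the mediant-split observation delivers; equivalently one phrases it exactly as in Lemma~\ref{monk1}, by noting that the vertex created at step $n$ is the mediant $w_1+w_3$ of the two older non-apex vertices, independently of $i_n$ and of which base map is used. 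The matrix computations behind all of this are routine.
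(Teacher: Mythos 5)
Your argument is correct and follows essentially the same route as the paper's proof: reduce the eight triples to the four with $\sigma=e$ via the twin lemma, compute the generating matrices, observe that they all fix the vertex $v_2$ and create the mediant $w_1+w_3$ on the opposite edge regardless of the symbol, and induct on the \emph{unordered} collection of level-$n$ subtriangles. Your explicit warning that the symbol-labelled triangles $\triangle(i_1,\dots,i_n)$ already disagree at level two is a worthwhile clarification of why the induction must be run on the unordered family (which is what the paper's notation $\triangle_{n,F}$ implicitly does), but it does not change the substance of the proof.
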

\begin{proof} The permutation $(e,12,e)$ gives matrices:
\begin{align*}
F_0&=\left(\begin{array}{ccc}
0&0&1\\
0&1&0\\
1&0&1\end{array}\right)&F_1&=\left(\begin{array}{ccc}
1&0&1\\
0&1&0\\
0&0&1\end{array}\right)
\end{align*}
The permutation $(e,123,e)$ gives matrices:
\begin{align*}
F_0&=\left(\begin{array}{ccc}
1&0&0\\
0&1&0\\
1&0&1\end{array}\right)&F_1&=\left(\begin{array}{ccc}
1&0&1\\
0&1&0\\
0&0&1\end{array}\right)
\end{align*}
The permutation $(e,12,13)$ gives matrices:
\begin{align*}
F_0&=\left(\begin{array}{ccc}
0&0&1\\
0&1&0\\
1&0&1\end{array}\right)&F_1&=\left(\begin{array}{ccc}
1&0&1\\
0&1&0\\
1&0&0\end{array}\right)
\end{align*}
The permutation $(e,123,13)$ gives matrices:
\begin{align*}
F_0&=\left(\begin{array}{ccc}
1&0&0\\
0&1&0\\
1&0&1\end{array}\right)&F_1&=\left(\begin{array}{ccc}
1&0&1\\
0&1&0\\
1&0&0\end{array}\right)
\end{align*}
Note that all these maps fix the vertex $(1,1,0)$ of $\triangle$.  Then $\triangle_{1,F}$ is the same for all these maps.  Now suppose that $\triangle_{n,F}$ is the same for all these maps.  Then for a given $T\in\triangle_{n,F}$ we have that $\{T(0),T(1)\}$, where $T(i)$ is obtained to applying $F_i$ to $T$, is the same for all these maps so we get that $\triangle_{n+1,F}$ is the same for all these maps.  The remaining four maps come from applying Lemma \ref{pairing} to these four maps.
\end{proof}

Of course this means that proving singularity for any of these eight classes  proves singularity for the remaining seven. The list of 21 class from the previous subsection includes three of these classes: $(e,12,e), (e,123,e)$ or $(e,12,13)$.  Thus we can reduce our list of classes to check to 17 classes.

 We will call the family represented by $(e,12,e)$ "degenerate Farey maps" because the maps in this family simply give a Farey partioning of one of the sides of $\triangle$.  

\begin{lemma} The TRIP maps $F_{e,e,(12)}$, $F_{e,e,(123)}$, $F_{e,(13),(12)}$, $F_{e,(13),(123)}$, $F_{(13),e,(12)}$, $F_{(13),(13),(12)}$, $F_{(13),e,(123)}$ and $F_{(13),(13),(123)}$ give the same partition of $\triangle$.
\end{lemma}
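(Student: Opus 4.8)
The plan is to follow the template of the two preceding lemmas. First I would write out $F_0$ and $F_1$ for the four triples on the list having $\sigma=e$, namely $(e,e,(12))$, $(e,e,(123))$, $(e,(13),(12))$ and $(e,(13),(123))$, by multiplying out $\sigma A_0\tau_0$ and $\sigma A_1\tau_1$. Inspecting the eight resulting matrices, one sees that all four triples obey a single common local subdivision rule: if the ordered column generators of a cone are $(w_1,w_2,w_3)$, then $F_0$ produces the cone whose generator set is $\{w_1+w_3,\,w_2,\,w_3\}$ and whose second column is $w_3$, while $F_1$ produces the cone $\{w_1+w_3,\,w_1,\,w_2\}$ whose second column is $w_1$. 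The individual matrices are genuinely different from triple to triple — in particular the ordered triple of generators of a given $\triangle(i_1,\dots,i_n)$ need not agree across the four maps — so the ``the matrices literally coincide'' argument of Lemma \ref{monk1} is not available. What is common is that the one new generator created at each step is $w_1+w_3$, depending only on the parent edge $\{w_1,w_3\}$ and not on the map.

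To exploit this I would decorate each subtriangle with an \emph{apex}, defined to be the vertex occupying the second column of the matrix $VF_{i_1}\cdots F_{i_n}$. The rule above then says that a decorated triangle with vertex set $\{a,b,c\}$ and apex $b$ is subdivided by the segment joining $b$ to the new vertex $a+c$ on the opposite side, and that its two decorated children are $(\{b,a,a+c\},\ \mathrm{apex}\ a)$ and $(\{b,c,a+c\},\ \mathrm{apex}\ c)$. This rule is symmetric in $a$ and $c$: interchanging them merely swaps the two children, so the unordered pair of decorated children is determined by the decorated parent alone, with no reference to which of the four triples is in force.

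Next I would induct on $n$, with inductive hypothesis that the \emph{set} of decorated triangles $\{(\triangle(i_1,\dots,i_n),\ \mathrm{apex})\}$ is the same for all four triples with $\sigma=e$. The case $n=0$ is trivial, since the relevant matrix is just $V=(v_1,v_2,v_3)$, with apex $v_2$, for every triple; and $n=1$ is the direct computation giving $\triangle(0)$ with vertex set $\{v_2,v_3,v_1+v_3\}$ and apex $v_3$, and $\triangle(1)$ with vertex set $\{v_1,v_2,v_1+v_3\}$ and apex $v_1$, in all four cases. The inductive step is immediate from the symmetric refinement rule: refining each decorated triangle of the common level-$n$ set yields the same level-$(n+1)$ set, even though the labelling of the pieces by words of length $n+1$ may be permuted. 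Forgetting the apices then gives $\mathcal{F}(e,e,(12))=\mathcal{F}(e,e,(123))=\mathcal{F}(e,(13),(12))=\mathcal{F}(e,(13),(123))$. Finally, Lemma \ref{pairing} sends each of these four triples $(\sigma,\tau_0,\tau_1)$ to its twin $(\sigma(13),(12)\tau_1,(12)\tau_0)$, and one checks that these twins are exactly the remaining four triples on the list, so they too give the same partition of $\triangle$.

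The one genuinely delicate point — and where this proof differs from the previous two — is isolating the correct invariant. The ordered generators are too rigid, since they differ across the four maps, while the bare unordered triangle is too coarse, since the subdivision rule needs to know which vertex plays the role of the apex. The content of the lemma is that the apex is precisely the surviving piece of data, and that the subdivision rule written in terms of the apex forgets the map because the new vertex $w_1+w_3$ is symmetric in the two base vertices and the two children merely trade roles. Once this is pinned down, what remains is the routine $3\times3$ matrix arithmetic and the word-bookkeeping already used in the preceding two lemmas.
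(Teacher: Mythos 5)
Your proposal is correct and follows essentially the same route as the paper: the paper also computes the eight matrices for the four triples with $\sigma=e$, inducts on the level with the invariant being the common set of subtriangles together with the distinguished second-column vertex (your ``apex'' is exactly the paper's $v_2(n)$), uses the symmetry of the new vertex $v_1(n)\hat{+}v_3(n)$ in the two ambiguous base vertices, and obtains the remaining four maps from Lemma \ref{pairing}. Your write-up merely makes explicit the bookkeeping (the swap of the two children and of the word labels) that the paper leaves implicit.
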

\begin{proof}
The permutation $(e,e,12)$ gives the following matrices:
\begin{align*}
F_0&=\left(\begin{array}{ccc}
0&0&1\\
1&0&0\\
0&1&1\end{array}\right)&F_1&=\left(\begin{array}{ccc}
0&1&1\\
1&0&0\\
0&0&1\end{array}\right)
\end{align*}
The permutation $(e,e,123)$ gives the following matrices:
\begin{align*}
F_0&=\left(\begin{array}{ccc}
0&0&1\\
1&0&0\\
0&1&1\end{array}\right)&F_1&=\left(\begin{array}{ccc}
1&1&0\\
0&0&1\\
1&0&0\end{array}\right)
\end{align*}
The permutation $(e,13,12)$ gives the following matrices:
\begin{align*}
F_0&=\left(\begin{array}{ccc}
1&0&0\\
0&0&1\\
1&1&0\end{array}\right)&F_1&=\left(\begin{array}{ccc}
0&1&1\\
1&0&0\\
0&0&1\end{array}\right)
\end{align*}
The permutation $(e,13,123)$ gives the following matrices:
\begin{align*}
F_0&=\left(\begin{array}{ccc}
1&0&0\\
0&0&1\\
1&1&0\end{array}\right)&F_1&=\left(\begin{array}{ccc}
1&1&0\\
0&0&1\\
1&0&0\end{array}\right)
\end{align*}
Notice that all three permutations give the same initial partition of $\triangle$ and that $v_2(1)$ is the same in both triangles.  Now suppose that $\triangle_{n,F}$ is the same and that for corresponding triangles $v_2(n)$ is the same.  Then because the new vertex is $v_1(n)\hat{+}v_3(n)$ and the only difference between partitions is which vertex is $v_1(n)$ and which vertex is $v_3(n)$, we get that $\triangle_{n+1,F}$ is the same for these four maps.  The remaining four maps come from applying \ref{pairing} to these four maps.
\end{proof}
In the same way as we did before, we now know that proving singularity for any element in any of these classes immediately proves singularity for all elements in the other seven classes.  As the classes
 $(e,e,12)$, $(e,e,123)$ and $(e,13,12)$ are all in our original list of 21 classes, we can reduce our list by two more, resulting in 15 classes.  
 
Thus we have reduced the number of classes from 21 to the following 15:
\begin{align*}
&(e,e,e)&&(e,e,12)&&(e,e,13)&&(e,e,23)&&(e,e,132)\\
&(e,12,e)&&(e,12,12)&&(e,12,23)&&(e,12,132)&&(e,13,e)\\
&(e,13,23)&&(e,13,132)&&(e,23,e)&&(e,23,132)&&(e,123,132)
\end{align*}

\section{On when $\Phi$ is a function: Convergence on the Barycentric Side}\label{convergence}

As we have reduced our problem to investigating fifteen different classes of triangle partition maps, we can return to the question of when is $\Phi(\sigma, \tau_0, \tau_1)$ an actual function, as opposed to being a relation.  Recall from section   \ref{analog}, we will have a function if for all sequences $(i_1, i_2, i_3, \ldots)$, we have that 
$$ \lim_{n\rightarrow \infty} \Gamma_{(\sigma,\tau_0,\tau_1)}(i_1,\dots,i_n)  = \; \mbox{single point}.$$

We first will set notation.  Set $v_1(n), v_2(n)$ and $v_3(n)$  to be the  vertices of $\Gamma_{(\sigma,\tau_0,\tau_1)}(i_1,,\dots,i_n)$ whose ordering is given by $VG_{i_1}(\sigma,\tau_0,\tau_1)\cdots G_{i_n}(\sigma,\tau_0,\tau_1)$.  Let $\tau_n$ denote the length of the side from $v_1(n)$ to $v_2(n)$, $\rho_n$ denote the length of the side from $v_2(n)$ to $v_3(n)$, and $\mu_n$ denote the length of the side from $v_1(n)$ to $v_3(n)$:
\begin{center}
\begin{tikzpicture}[scale=4]
\draw (0,0)--(1,1);
\draw (0,0)--(3/2,1/3);
\draw (3/2,1/3)--(1,1);
\node [left] at (0,0) {$v_1(n)$};
\node [right] at (3/2,1/3) {$v_2(n)$};
\node [above] at (1,1) {$v_3(n)$};
\node [below] at (3/4,1/6) {$\tau_n$};
\node [right] at (5/4,2/3) {$\rho_n$};
\node [above] at (1/2,1/2) {$\mu_n$};
\end{tikzpicture}
\end{center}

For a given  $(\sigma, \tau_0, \tau_1)$ and given sequence $(i_1, i_2, i_3,\ldots )$ of zero and ones, we will have  $ \lim_{n\rightarrow \infty} \Gamma_{(\sigma,\tau_0,\tau_1)}(i_1,\dots,i_n)  = \; \mbox{single point} $ when 
\[\lim_{n\to\infty}\tau_n=\lim_{n\to\infty}\rho_n=\lim_{n\to\infty}\mu_n  =0.\]

Then we have 
\begin{theorem}  For any triangle partition map $(\sigma,\tau_0,\tau_1)$  in  one of the classes $$ (e,e,23), (e,e,132), (e, 23, 132),$$
we have that $ \lim_{n\rightarrow \infty} \Gamma_{(\sigma,\tau_0,\tau_1)}(i_1,\dots,i_n)  = \; \mbox{single point},$
in which case $\Phi(\sigma, \tau_0, \tau_1)$ is a function.
\end{theorem}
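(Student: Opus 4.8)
\emph{Step 1: reduce to three representatives.} The property asserted — that for \emph{every} sequence $(i_1,i_2,\dots)$ the nested triangles $\Gamma_{(\sigma,\tau_0,\tau_1)}(i_1,\dots,i_n)$ shrink to a point — is constant on each of the $15$ classes, so the plan is to verify it only for the three listed representatives. Indeed, by Lemma~\ref{pairing} a TRIP map and its twin give literally the same barycentric partition (one checks $(13)G_0(12)=G_1$ and $(13)G_1(12)=G_0$ just as for $F_0,F_1$), and by Theorem~\ref{ordering}/Corollary~\ref{permute} conjugating $(\sigma,\tau_0,\tau_1)$ by a $\rho\in S_3$ replaces $\mathcal B(\sigma,\tau_0,\tau_1)$ by its image under a fixed invertible linear map of $\R^2$, which is bi-Lipschitz and hence changes every diameter by at most a bounded factor. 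So ``all nested $\Gamma$-sequences collapse to points'' is a class invariant, and it suffices to treat $(e,e,23)$, $(e,e,132)$ and $(e,23,132)$.

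\emph{Step 2: set up the recursion in barycentric coordinates.} Since $G_0$, $G_1$ and the permutation matrices are nonnegative with unit column sums, every product $G_{i_1}(\sigma,\tau_0,\tau_1)\cdots G_{i_n}(\sigma,\tau_0,\tau_1)$ is column-stochastic, so its columns are convex combinations of $v_1,v_2,v_3$ and therefore lie on the plane $x_1=1$, where $\pi$ is just ``delete the first coordinate.'' Identifying $\triangle$ affinely with the $2$-simplex $\{(a,b,c)\ge0:a+b+c=1\}$, the vertices of $\Gamma_{(\sigma,\tau_0,\tau_1)}(i_1,\dots,i_n)$ are exactly the three columns of $G_{i_1}(\sigma,\tau_0,\tau_1)\cdots G_{i_n}(\sigma,\tau_0,\tau_1)$. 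Because replacing a $1$ by $\tfrac12$ in $F_j$ means exactly one new vertex appears per step and it is the midpoint of an edge of the current triangle, reading off $G_0$ and $G_1$ gives, for each of the three maps, an explicit rule: if the vertices of $\Gamma_{(\sigma,\tau_0,\tau_1)}(i_1,\dots,i_n)$ (ordered as in the matrix product) are $(p,q,r)$, then those of its two children are two prescribed permutations of $\{p,\ q,\ r,\ \tfrac12(p+r)\}$. In particular each $\Gamma_{(\sigma,\tau_0,\tau_1)}(i_1,\dots,i_n)$ is an honest half (by a median cut) of its predecessor, so $\mathrm{area}\,\Gamma_{(\sigma,\tau_0,\tau_1)}(i_1,\dots,i_n)=2^{-n}\,\mathrm{area}\,\triangle$, and the diameter $D_n:=\max(\tau_n,\rho_n,\mu_n)$ is nonincreasing in $n$. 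The goal is to upgrade this to $D_n\to0$.

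\emph{Step 3: a contraction-after-bounded-delay argument.} For each of the three maps I would (i) exhibit a finite list $W$ of ``atomic'' words and a constant $c<1$ so that applying $G_{w_1}(\sigma,\tau_0,\tau_1)\cdots G_{w_{|w|}}(\sigma,\tau_0,\tau_1)$ to \emph{any} triangle shrinks its diameter by a factor $\le c$, for every $w\in W$; each such estimate is a one-line computation — write the three output vertices as affine combinations of $p,q,r$ and bound the induced linear map on the two-dimensional space of coefficient vectors summing to $0$. For $(e,e,23)$ the recursion is $(p,q,r)\mapsto(q,r,\tfrac12(p+r))$ on step $0$ and $(p,q,r)\mapsto(p,\tfrac12(p+r),q)$ on step $1$, and one checks $W=\{00,\ 11,\ 1010,\ 0101\}$ works with $c=\tfrac34$ (e.g.\ $11$ performs an exact $\tfrac12$-homothety centered at $p$). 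Then (ii) observe that every binary word of some fixed length $L$ contains an element of $W$ as a factor — for $(e,e,23)$ take $L=4$, since a length-$4$ word with neither $00$ nor $11$ must be $1010$ or $0101$. Since $D$ never increases, any infinite sequence splits into length-$L$ blocks across each of which $D$ drops by a factor $\le c$, so $D_n\le c^{\lfloor n/L\rfloor}D_0\to0$, which is the theorem. (Equivalently one can phrase the bookkeeping through the submultiplicative Dobrushin coefficient $\tfrac12\max_{j,k}\|Me_j-Me_k\|_1$ of the matrix products, which is precisely the quantity being forced to $0$.)

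\emph{Where the work is.} Step~1 and the ``area halves / $D$ nonincreasing'' part of Step~2 are routine; the real content is Step~3 for $(e,e,132)$ and $(e,23,132)$, namely pinning down the correct atomic list $W$ and length $L$ so that the covering property of (ii) holds. This is exactly the point where the specific labeling matters: for a map such as $(e,e,e)$ the analogous attempt must fail (some sequences give $\Gamma$'s collapsing to a segment, not a point), so $W$ has to ``see'' that the midpoint cut rotates among the three vertices often enough to forbid arbitrarily long slivers, and checking this honestly is the main obstacle. For the Mönkemeyer-type class $(e,23,132)$ one can sidestep it entirely by invoking Panti's result \cite{panti} that the Mönkemeyer partition of $\triangle$ shrinks to points, since $T_{(e,23,132)}$ is, up to a twin, the Mönkemeyer map.
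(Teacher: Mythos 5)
Your proposal is correct in substance and rests on exactly the same computational core as the paper's proof: reading off how the three side lengths of $\Gamma_{(\sigma,\tau_0,\tau_1)}(i_1,\dots,i_n)$ transform under $G_0(e,e,23)$ and $G_1(e,e,23)$ (your vertex recursions $(p,q,r)\mapsto(q,r,\tfrac12(p+r))$ and $(p,q,r)\mapsto(p,\tfrac12(p+r),q)$ are precisely the paper's relations $\tau_n=\rho_{n-1},\ \rho_n=\tfrac12\mu_{n-1},\ \mu_n\le\max(\tau_{n-1},\rho_{n-1})$ and their $G_1$ counterparts). Where you genuinely differ is in how you extract $D_n\to0$ from these relations. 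The paper splits on ``infinitely many zeros'' versus ``infinitely many ones'' and argues that each recursion alone forces the limiting side lengths to vanish; this glosses over the interleaving of the two recursions. Your contraction-after-bounded-delay scheme --- exhibiting the atomic words $W=\{00,11,1010,0101\}$ with uniform factor $c=\tfrac34$, noting that every length-$4$ word contains one as a factor, and using monotonicity of the diameter to get $D_n\le(3/4)^{\lfloor n/4\rfloor}D_0$ --- handles the interleaving explicitly and yields a quantitative decay rate the paper does not state. Your Step 1 reduction is also sound: the $G$-analogue of Lemma~\ref{pairing} holds by the same computation, and the change of cone in Corollary~\ref{permute} restricts to an affine (hence bi-Lipschitz) map of the plane $x_1=1$, so ``all nested $\Gamma$'s collapse'' is indeed a class invariant.

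Two caveats, neither fatal. First, you fully execute the argument only for $(e,e,23)$ and defer finding $W$ and $L$ for $(e,e,132)$ and $(e,23,132)$; this mirrors the paper's own ``the others are similar,'' but since you correctly identify that the choice of $W$ is where a map like $(e,e,e)$ would fail, the theorem is not actually proved until those two lists are produced. Second, the proposed shortcut for $(e,23,132)$ via \cite{panti} requires knowing that the barycentric partition attached to that class coincides (up to the identifications of Lemma~\ref{pairing} and Corollary~\ref{permute}) with the tent-map partition whose convergence Panti establishes; that is plausible and consistent with Section~\ref{Monkmayer}, but it is an extra verification, not a free citation.
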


\begin{proof}
We will show this just for the map $(e,e,23)$ as the others are similar.

We  have
\[G_0(e,e,23)=\left(\begin{array}{ccc}
0&0&1/2\\
1&0&0\\
0&1&1/2\end{array}\right)\]
This gives us 
\begin{align*}
\tau_n&=\rho_{n-1}\\
\rho_n&=\frac{1}{2}\mu_{n-1}\\
\mu_n&\leq\max(\tau_{n-1},\rho_{n-1})
\end{align*}
Then for any TRIP tree sequence with infinitely many zeros we have that \[\lim_{n\to\infty}\tau_n=\lim_{n\to\infty}\rho_n=\frac{1}{2}\lim_{n\to\infty}\mu_n\] which implies convergence to a point.

We also have 
\[G_1(e,e,23)=\left(\begin{array}{ccc}
1&1/2&0\\
0&0&1  \\
0&1/2&0\end{array}\right)\]
This gives us the following:
\begin{align*}
\tau_n&=\frac{1}{2}\mu_{n-1}\\
\rho_n&\leq\max(\tau_{n-1},\rho_{n-1})\\
\mu_n&=\tau_{n-1}
\end{align*}
Then whenever we have $\tau_1=(23)$ and a TRIP tree sequence with infinitely many ones we have 
\[\lim_{n\to\infty}\mu_n=\lim_{n\to\infty}\tau_n=\frac{1}{2}\lim_{n\to\infty}\mu_n\]
which implies that 
\[\lim_{n\to\infty}\mu_n=\lim_{n\to\infty}\tau_n=0\]
and hence convergence.  

Since there have to be an infinite number of zeros or an infinite number of ones (usually both) for any sequence $(i_1, i_2, i_3,\ldots ),$ we always have convergence.

\end{proof}

Now to turn to those for which we do not always have convergence.

\begin{theorem}  For any triangle partition map $(\sigma,\tau_0,\tau_1)$  in  one of the classes $$ (e,e,e), (e,e,12), (e, e, 13), (e, 23, e),$$
we have that $ \lim_{n\rightarrow \infty} \Gamma_{(\sigma,\tau_0,\tau_1)}(i_1,\dots,i_n)  = \; \mbox{single point}$
whenever the sequence $(i_1,i_2, i_3, \ldots )$ has an infinite number of zeros (i.e., when $(i_1,i_2, i_3, \ldots )\neq (i_1, \ldots i_k, \bar{1}$).
In these cases,  $\Phi(\sigma, \tau_0, \tau_1)$ is a function.
\end{theorem}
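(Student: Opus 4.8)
\emph{Reduction to four representatives.} Each of the four classes contains the triple naming it, which has $\sigma=e$. Any two maps in one class are linked by twin‑moves (Lemma~\ref{pairing}) and conjugation‑moves (Corollary~\ref{permute}), and the barycentric subdivisions transform in step: the identities $(13)G_0(12)=G_1,\ (13)G_1(12)=G_0$ already used in Lemma~\ref{pairing} show a map and its twin give literally the same family of triangles $\Gamma$, while the telescoping identity $\prod_j G_{i_j}(\rho\sigma,\tau_0\rho^{-1},\tau_1\rho^{-1})=\rho\bigl(\prod_j G_{i_j}(\sigma,\tau_0,\tau_1)\bigr)\rho^{-1}$ (proved exactly as Corollary~\ref{permute}) shows a conjugation‑move carries the $\Gamma$'s by a fixed projective self‑map $\Psi$ of $\triangle$. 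Since $\Psi$ is a homeomorphism of the compact set $\triangle$, it is uniformly continuous with uniformly continuous inverse and maps segments to segments, so a nested sequence of triangles shrinks to a point for $(\sigma,\tau_0,\tau_1)$ if and only if it does so for the transformed map. Hence it suffices to prove, for each of $(e,e,e),(e,e,12),(e,e,13),(e,23,e)$, that $\lim_n\Gamma_{(\sigma,\tau_0,\tau_1)}(i_1,\dots,i_n)$ is a single point whenever $(i_1,i_2,\dots)$ contains infinitely many zeros.

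\emph{Side‑length recursions and the easy case.} Fix a representative and write out $G_0,G_1$. Because each is Markov, $VG_{i_1}\cdots G_{i_n}$ has constant top row, so $\pi$ applied to a convex combination of its columns is that same combination of the plane vertices $v_1(n),v_2(n),v_3(n)$; hence each of $G_0,G_1$ sends the ordered triple of vertices to a reordering of two of them together with the midpoint of $v_1$ and $v_3$. Reading this off the matrices gives recursions for the side lengths $\tau_n,\rho_n,\mu_n$ in which, at each step, one new length equals $\tfrac12$ of a previous one, one equals a previous one, and one is $\le\max(\tau_{n-1},\rho_{n-1})$ (the distance from a point to the midpoint of a segment never exceeds the larger of the distances to its endpoints). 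In particular $d_n:=\max(\tau_n,\rho_n,\mu_n)$ satisfies $d_n\le\max(\tau_{n-1},\rho_{n-1})\le d_{n-1}$. For $(e,e,12)$ the halved length is always the slot $\rho$, with $\rho_n=\tfrac12\mu_{n-1}$; one then checks directly that $i_n=0$ implies $d_{n+1}\le\tfrac12 d_{n-2}$, so infinitely many zeros force $d_n\to 0$ and the intersection is a point.

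\emph{The remaining three maps and the conclusion.} For $(e,e,e)$, $(e,e,13)$ and $(e,23,e)$ the halved length rotates between slots ($\rho_n=\tfrac12\mu_{n-1}$ after a $0$, $\mu_n=\tfrac12\mu_{n-1}$ after a $1$) while the ``sticky'' side $\tau$ is merely copied under the other letter; the main obstacle is to rule out a long side persisting through every zero. I would do this by a compactness/contradiction argument: if $d_n$ decreases to some $L>0$, then since the nested intersection is a point or a segment the triangles $\Gamma_n$ converge in Hausdorff distance to a segment $K$ of length $L$, so along a subsequence (chosen so the next few digits are fixed) the vertices $v_i(n)$ converge to collinear points $p_i$ with $\max_{i,j}|p_i-p_j|=L$; picking a short window that contains a zero, applying the midpoint recursion and passing to the limit yields a triangle whose vertices lie on the line of $K$ yet whose diameter is $<L$, contradicting $d_n\to L$ along that subsequence. (One can instead grind this out by hand, tracing the value of the sticky slot backwards through each block of ones to the preceding zero and bounding it there by $\tfrac12 d_{\text{prev}}$; this is the delicate bookkeeping.) Finally, the set of $(x,y)$ whose Farey‑$(\sigma,\tau_0,\tau_1)$ TRIP sequence is eventually $\bar 1$ is contained in the countable union over finite prefixes of the limits $\bigcap_n\triangle_{(\sigma,\tau_0,\tau_1)}(i_1,\dots,i_k,1^{\,n})$, each a segment of Lebesgue measure zero; off this null set $\Phi(\sigma,\tau_0,\tau_1)$ assigns to each point a single point, and so is a genuine function there, which is the sense in which the statement is to be read.
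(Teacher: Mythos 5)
Your treatment of $(e,e,12)$ is correct and complete --- the observation that $\rho_n=\tfrac12\mu_{n-1}$ under \emph{both} letters, so that a single zero at step $n$ forces $d_{n+1}\le\tfrac12 d_{n-2}$, is sharper than anything in the paper's proof, which simply points back to the $(e,e,23)$ computation. The reduction to class representatives is also fine. The gap is in the remaining three cases, and it sits exactly where you flag ``the delicate bookkeeping.'' Your compactness argument extracts a subsequence along which the vertex triples converge to a collinear configuration of diameter $L$ and then applies ``a short window that contains a zero.'' But once the gaps between successive zeros are unbounded, a window of any fixed length eventually contains at most one zero, and one zero does not push the limiting diameter below $L$: for $(e,e,e)$ the collinear configuration $(p,q,p)$ with $|p-q|=L$ is fixed by the letter $1$ and goes to $(q,p,p)$ under the letter $0$, still of diameter $L$. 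So the contradiction never materializes when the zeros are sparse.

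This is not a repairable defect of your sketch, because for $(e,e,e)$ and $(e,23,e)$ the statement itself fails for sparse zeros. Since the $G$'s are column-stochastic, the projected vertices evolve by exact affine rules; for $(e,e,e)$, $G_1$ fixes $v_1,v_2$ and replaces $v_3$ by $\mathrm{mid}(v_1,v_3)$, while $G_0$ sends $(v_1,v_2,v_3)$ to $(v_2,v_3,\mathrm{mid}(v_1,v_3))$. Hence the block $1^k0$ sends $(a,b,c)$ to $\bigl(b,\ a+2^{-k}(c-a),\ a+2^{-k-1}(c-a)\bigr)$, so the side length $|v_1(n)-v_2(n)|$ is constant along the run of ones and changes by at most $2^{-k}\,\mathrm{diam}(\triangle)$ at the closing zero. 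For the sequence $1^{k_1}0\,1^{k_2}0\cdots$ with $\sum_j 2^{-k_j}$ sufficiently small, that side therefore stays bounded away from $0$ forever, and $\lim_n\Gamma_{(e,e,e)}(i_1,\dots,i_n)$ is a segment rather than a point even though the sequence has infinitely many zeros; the identical computation applies to $(e,23,e)$, whose $G_1$ is the same matrix. (The paper's own proof shares this flaw: it borrows the $(e,e,23)$ argument, which treats the $G_0$ recursions $\tau_n=\rho_{n-1}$, $\rho_n=\tfrac12\mu_{n-1}$ as if they held at every step, whereas they are interrupted by arbitrarily long runs of $G_1$ during which $\tau$ is exactly preserved and $\rho$ creeps back up to $\tau$.) Only for $(e,e,13)$, where $G_1$ moves $v_1$ to $\mathrm{mid}(v_1,v_3)$ so that no side is exactly frozen, does your program have a realistic chance of being completed.
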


\begin{proof}
We prove this for the $(e,e,e)$ case, as the rest are similar. 

First, $G_0(e,e,e)= G_0(e,e,23)$, thus the argument in the previous proof will show that we have convergence whenever the sequence $(i_1,i_2, i_3, \ldots )$ has an infinite number of zeros.

What changes the situation from the previous theorem is the nature of $G_1(e,e,e).  $
We have
\[G_1(e,e,e)=\left(\begin{array}{ccc}
1&0&1/2\\
0&1&0\\
0&0&1/2\end{array}\right)\]
giving us
\begin{align*}
\tau_n&=\tau_{n-1}\\
\rho_n&\leq\max(\tau_{n-1},\rho_{n-1})\\
\mu_n&=\frac{1}{2}\mu_{n-1}
\end{align*}
which gives us no information about convergence to a point.

\end{proof}

\begin{theorem}  For any triangle partition map $(\sigma,\tau_0,\tau_1)$  in  one of the classes $$ (e,12,23), (e, 12, 132), (e, 13, 23), (e,13,132), (e,123,132),$$
we have that $ \lim_{n\rightarrow \infty} \Gamma_{(\sigma,\tau_0,\tau_1)}(i_1,\dots,i_n)  = \; \mbox{single point}$
whenever the sequence $(i_1,i_2, i_3, \ldots )$ has an infinite number of ones (i.e., when $(i_1,i_2, i_3, \ldots )\neq (i_1, \ldots i_k, \bar{0}$).
In these cases,  $\Phi(\sigma, \tau_0, \tau_1)$ is a function.
\end{theorem}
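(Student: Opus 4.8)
The plan is to mirror the two theorems just proved. I fix one representative triple from the list, compute its two barycentric matrices $G_0(\sigma,\tau_0,\tau_1)$ and $G_1(\sigma,\tau_0,\tau_1)$, read off the recurrences these force on the side lengths $\tau_n$, $\rho_n$, $\mu_n$ of the nested triangles $\Gamma_{(\sigma,\tau_0,\tau_1)}(i_1,\dots,i_n)$, and show that the presence of infinitely many $1$'s in $(i_1,i_2,\dots)$ sends all three lengths to $0$, so that $\bigcap_n\Gamma_{(\sigma,\tau_0,\tau_1)}(i_1,\dots,i_n)$ is a single point.

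I would begin with $(e,12,23)$. A direct computation gives
\[G_0(e,12,23)=\left(\begin{array}{ccc}0&0&1/2\\0&1&0\\1&0&1/2\end{array}\right),\qquad G_1(e,12,23)=\left(\begin{array}{ccc}1&1/2&0\\0&0&1\\0&1/2&0\end{array}\right),\]
and $G_1(e,12,23)$ is exactly the matrix already met in the $(e,e,23)$ computation. Because each $G_i(\sigma,\tau_0,\tau_1)$ is Markov, the top entry of every column of $VG_{i_1}\cdots G_{i_n}$ stays equal to $1$, so $\pi$ merely drops it and each new vertex is literally the midpoint of two old vertices; also $\det G_i(\sigma,\tau_0,\tau_1)=\pm\tfrac12$, so $\mathrm{area}\,\Gamma_{(\sigma,\tau_0,\tau_1)}(i_1,\dots,i_n)=2^{-n}\,\mathrm{area}\,\triangle$. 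Reading off the geometry, $G_0$ sends $v_1\mapsto v_3$, fixes $v_2$, and replaces $v_3$ by the midpoint of the old $v_1v_3$ edge, which yields
\[\tau_n=\rho_{n-1},\qquad \rho_n\le\max(\tau_{n-1},\rho_{n-1}),\qquad \mu_n=\tfrac12\mu_{n-1};\]
since this gives no control on $\tau_n$ or $\rho_n$, a sequence ending in $\bar{0}$ collapses only to a segment ($v_2$ frozen while $v_1,v_3$ converge onto the old $v_1v_3$ edge), which is the case excluded in the statement. Applying $G_1$ instead fixes $v_1$, sets $v_3$ equal to the old $v_2$, and sets $v_2$ equal to the midpoint of the old $v_1v_3$ edge, which yields
\[\tau_n=\tfrac12\mu_{n-1},\qquad \rho_n\le\max(\tau_{n-1},\rho_{n-1}),\qquad \mu_n=\tau_{n-1}.\]

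With the recurrences in hand the argument runs as follows. The quantity $M_n:=\max(\tau_n,\rho_n,\mu_n)$ is non-increasing under both $G_0$ and $G_1$ — every right-hand side above is $\le M_{n-1}$, using that the length of the median from $v_2$ (the quantity producing the bound $\rho_n\le\max(\tau_{n-1},\rho_{n-1})$) never exceeds the longer of the two sides meeting at $v_2$ — so $M_n$ is non-increasing with limit $L\ge 0$, and the nested triangles shrink to a segment of length $L$; they cannot shrink to a two-dimensional set since their areas tend to $0$. Now suppose the sequence has infinitely many $1$'s. At every index $n$ with $i_n=1$ one has $\tau_n=\tfrac12\mu_{n-1}\le\tfrac12 M_{n-1}$, so $\tau$ is small along a cofinal set of indices; feeding such a small $\tau_{n-1}$ into the exact identity $\rho_n^2=\tfrac12(\tau_{n-1}^2+\rho_{n-1}^2)-\tfrac14\mu_{n-1}^2$ for the squared median length forces $\rho_n$ to drop strictly below $M_{n-1}$, and at the next $1$ the relation $\mu_n=\tau_{n-1}$ carries the small value into $\mu$. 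Pushing this through — so that the limiting lengths satisfy the strictly contracting relation $L=\tfrac12 L$ — gives $L=0$; then $\rho_n\to 0$ follows from the triangle inequality $\rho_n\le\tau_n+\mu_n$. Hence $\bigcap_n\Gamma_{(\sigma,\tau_0,\tau_1)}(i_1,\dots,i_n)$ is a single point whenever the sequence has infinitely many $1$'s, and since the remaining sequences $(i_1,\dots,i_k,\bar{0})$ form a countable — hence Lebesgue-null — family of points and segments in $\triangle$ on the Farey side, $\Phi(\sigma,\tau_0,\tau_1)$ is single-valued off a set of measure zero, i.e.\ a genuine function in the sense used in the previous theorems.

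The remaining four classes are handled the same way. For $(e,13,23)$ the matrix $G_1$ is literally the one above, so nothing changes. For $(e,12,132)$, $(e,13,132)$ and $(e,123,132)$ one finds instead
\[G_1(\sigma,\tau_0,\tau_1)=\left(\begin{array}{ccc}0&1/2&1\\1&0&0\\0&1/2&0\end{array}\right),\]
giving $\tau_n^2=\tfrac12(\tau_{n-1}^2+\rho_{n-1}^2)-\tfrac14\mu_{n-1}^2$, $\rho_n=\tfrac12\mu_{n-1}$, $\mu_n=\tau_{n-1}$, with the roles of $\tau$ and $\rho$ interchanged; here the crude bound $\rho_n\le\max(\tau_{n-1},\rho_{n-1})$ is no longer enough and one must keep the exact median identity, which on passing to the limit yields $L^2=\tfrac38 L^2$, hence again $L=0$. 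In all five cases the other barycentric matrix merely halves one side and freezes or recycles the remaining two, contributing only the excluded $\bar{0}$ behaviour; this is also what shows $\Phi$ is \emph{not} single-valued on the segment-producing sequences, consistent with the exclusion. The step I expect to require the most care is precisely the conversion of the heuristic ``the limiting side lengths satisfy a strictly contracting linear relation, hence vanish'' into a rigorous statement — carrying it out with $\limsup$'s and the monotonicity of $M_n$ rather than assuming the individual $\tau_n,\rho_n,\mu_n$ converge, and doing it uniformly for both shapes of $G_1$ that occur. The underlying mechanism — one barycentric matrix contracts along the $0$'s, the other along the $1$'s, and every infinite binary word has infinitely many $0$'s or infinitely many $1$'s — is identical to that of the two theorems just proved.
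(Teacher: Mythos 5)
Your proposal follows essentially the same route as the paper's: fix the representative $(e,12,23)$, compute $G_0(e,12,23)$ and $G_1(e,12,23)$, observe that $G_1(e,12,23)=G_1(e,e,23)$ so the contraction argument from the $(e,e,23)$ theorem applies whenever the sequence contains infinitely many ones, and note that the $G_0$ recurrences alone give no convergence (the excluded $\bar{0}$ tails). Your extra bookkeeping --- the monotone quantity $M_n$, the median-length identity needed for the $\tau_1=(132)$ classes whose $G_1$ is \emph{not} the $(e,e,23)$ matrix, and the admission that the heuristic relation $L=\tfrac12 L$ must be converted into a $\limsup$ argument --- simply makes explicit what the paper compresses into ``$\lim\mu_n=\lim\tau_n=\tfrac12\lim\mu_n$'' and ``the others are similar,'' so it is the same proof carried out at (if anything) a slightly higher level of care.
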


\begin{proof} We will show this for $ (e,12,23)$, as the others are similar.  First, $G_1(e, 12, 23) = G_1(e,e,23)$, meaning that we can use the arguments of the first proof in this subsection to see that we get convergence whenever the sequence $(i_1,i_2, i_3, \ldots )$ has an infinite number of ones.

We have
\[G_0(e,12,23)=\left(\begin{array}{ccc}
0&0&1/2\\
0&1&0\\
1&0&1/2\end{array}\right)\]
This gives us the following:
\begin{align*}
\tau_n&=\rho_{n-1}\\
\rho_n&\leq\max(\tau_{n-1},\rho_{n-1})\\
\mu_n&=\frac{1}{2}\mu_{n-1}
\end{align*}
Then whenever we have $\tau_0=(12)$ and a TRIP tree sequence with infinitely many zeros we have that 
\begin{align*}
\lim_{n\to\infty}\tau_n&=\lim_{n\to\infty}\rho_n\\
\lim_{n\to\infty}\mu_n&=0
\end{align*}
which does not tell us that we are guaranteed convergence  to a point. 

\end{proof}

\begin{theorem} For any triangle partition map $(\sigma,\tau_0,\tau_1)$  in one of the classes $$ (e,12,12), (e,13,e)$$
we have that $ \lim_{n\rightarrow \infty} \Gamma_{(\sigma,\tau_0,\tau_1)}(i_1,\dots,i_n)  = \; \mbox{single point}$
whenever the sequence $(i_1,i_2, i_3, \ldots )$ has an infinite number of ones and zeros (i.e., when $(i_1,i_2, i_3, \ldots )\neq (i_1, \ldots i_k, \bar{0}), (i_1, \ldots ,i_k,\bar{1}$). In these cases,  $\Phi(\sigma, \tau_0, \tau_1)$ is a function.
\end{theorem}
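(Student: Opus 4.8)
The plan is to follow the template of the preceding proofs: extract the three side–length recursions for the barycentric matrices $G_0(e,12,12)$ and $G_1(e,12,12)$, analyze convergence under each symbol separately, and then treat the mixed case by a contraction argument. I will carry out the class $(e,12,12)$ in detail; the class $(e,13,e)$ is handled by the same method — there too one of the two barycentric matrices freezes a side length while the other halves $\mu$, and a transition between the two symbols produces a uniform contraction.

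A direct computation gives
\[
G_0(e,12,12)=\left(\begin{array}{ccc} 0&0&1/2\\ 0&1&0\\ 1&0&1/2\end{array}\right),\qquad
G_1(e,12,12)=\left(\begin{array}{ccc} 0&1&1/2\\ 1&0&0\\ 0&0&1/2\end{array}\right).
\]
Reading off how the ordered vertices $v_1(n),v_2(n),v_3(n)$ transform (each new vertex is a column combination of the old ones, and the midpoint column produces the midpoint of the side $v_1(n-1)v_3(n-1)$) yields, for $G_0(e,12,12)$,
\begin{align*}
\tau_n&=\rho_{n-1}, & \rho_n&\leq\max(\tau_{n-1},\rho_{n-1}), & \mu_n&=\tfrac12\mu_{n-1},
\end{align*}
and for $G_1(e,12,12)$,
\begin{align*}
\tau_n&=\tau_{n-1}, & \rho_n&=\tfrac12\mu_{n-1}, & \mu_n&\leq\max(\tau_{n-1},\rho_{n-1}).
\end{align*}
In both cases the side that is only bounded rather than given exactly is a median of the previous triangle (from $v_2(n-1)$ to the midpoint of the side of length $\mu_{n-1}$), so by Apollonius's theorem it satisfies the exact identity $(\,\cdot\,)^2=\tfrac14\bigl(2\tau_{n-1}^2+2\rho_{n-1}^2-\mu_{n-1}^2\bigr)$; this is the quantitative input we will use. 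Note at once that a tail of all $0$'s only forces $\mu_n\to0$, leaving $\max(\tau_n,\rho_n)$ merely non-increasing (and constant in a degenerate isosceles configuration, whose limit is a segment), and a tail of all $1$'s freezes $\tau_n=\tau_{n-1}$; hence the hypothesis that the sequence have infinitely many $0$'s and infinitely many $1$'s is genuinely needed.

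For the mixed case put $M_n=\max(\tau_n,\rho_n,\mu_n)$, the diameter of $\Gamma_{(e,12,12)}(i_1,\dots,i_n)$. Both recursions give $M_n\leq M_{n-1}$. The core claim is that every occurrence of the pattern $i_k=1,\ i_{k+1}=0$ gives a definite contraction $M_{k+1}\leq\sqrt{5/8}\,M_{k-1}$. Indeed the $G_1$ step gives $\tau_k=\tau_{k-1}$ and $\rho_k=\tfrac12\mu_{k-1}$; the following $G_0$ step then gives $\tau_{k+1}=\rho_k=\tfrac12\mu_{k-1}\leq\tfrac12 M_{k-1}$, $\mu_{k+1}=\tfrac12\mu_k\leq\tfrac12\max(\tau_{k-1},\rho_{k-1})\leq\tfrac12 M_{k-1}$, and, using the median identity for $\rho_{k+1}$,
\[
\rho_{k+1}^2=\tfrac14\bigl(2\tau_k^2+2\rho_k^2-\mu_k^2\bigr)\leq\tfrac14\bigl(2\tau_{k-1}^2+\tfrac12\mu_{k-1}^2\bigr)\leq\tfrac58\,M_{k-1}^2 .
\]
Since $\tfrac12<\sqrt{5/8}$, all three sides at step $k+1$ are $\leq\sqrt{5/8}\,M_{k-1}$, proving the claim. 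A sequence with infinitely many $0$'s and infinitely many $1$'s is not eventually constant, so the pattern $10$ occurs at infinitely many indices $k_1<k_2<\cdots$, and $k_{j+1}\geq k_j+2$ always (the symbol after a $10$ is $0$, not $1$); combined with monotonicity of $(M_n)$ this gives $M_{k_{j+1}+1}\leq\sqrt{5/8}\,M_{k_{j+1}-1}\leq\sqrt{5/8}\,M_{k_j+1}$, so $M_{k_j+1}\to0$ and hence $M_n\to0$. Thus $\tau_n,\rho_n,\mu_n\to0$, so $\lim_{n\to\infty}\Gamma_{(e,12,12)}(i_1,\dots,i_n)$ is a single point and $\Phi(e,12,12)$ is a function on the set of such $(x,y)$.

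I expect the main obstacle to be precisely the step where the crude bound $\rho_n\leq\max(\tau_{n-1},\rho_{n-1})$ is not enough: long $0$‑blocks or long alternating patterns prevent any uniform contraction coming from the $\max$‑inequalities alone (a $0$‑block applied to a near‑degenerate isosceles triangle shrinks nothing but $\mu$), so one is forced to localize the contraction at the $10$ transitions and invoke the parallelogram/median identity to see that the median produced there is strictly shorter than the longer adjacent side. The constant $\sqrt{5/8}$ is not optimized; any fixed constant $<1$ would do, and the same scheme, with the roles of $\tau$ and $\rho$ interchanged, covers the class $(e,13,e)$.
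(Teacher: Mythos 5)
Your proof is correct, and it follows the paper's general template (read off the side-length recursions for $G_0(e,12,12)$ and $G_1(e,12,12)$, then exploit a two-symbol transition that must occur infinitely often), but the key quantitative step is genuinely different and, in fact, tighter than what the paper does. The paper localizes at ``$01$'' transitions, obtains $\tau_{n+1}=\rho_{n-1}$, $\rho_{n+1}=\tfrac14\mu_{n-1}$, $\mu_{n+1}\le\max(\tau_{n-1},\rho_{n-1})$, and then simply asserts that $\lim\tau_n=\lim\rho_n=\tfrac14\lim\mu_n=0$; note that at such a transition two of the three sides are bounded only by $M_{n-1}$, so no uniform contraction of the diameter is exhibited, and the existence of the limits is not addressed. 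You instead localize at ``$10$'' transitions and, crucially, replace the crude bound $\rho_n\le\max(\tau_{n-1},\rho_{n-1})$ by the exact Apollonius median identity $\rho_{k+1}^2=\tfrac14\bigl(2\tau_k^2+2\rho_k^2-\mu_k^2\bigr)$ (which is legitimate: the only inexact side in each step is the median from $v_2$ to the midpoint of the $\mu$-side, as one sees from the columns of the $G$ matrices). This yields the uniform bound $M_{k+1}\le\sqrt{5/8}\,M_{k-1}$ on the full diameter across each $10$ block, and combined with the monotonicity of $M_n$ and the infinitude and spacing of the $10$ occurrences it gives $M_n\to 0$ by a clean geometric-decay argument. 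What your approach buys is a complete, self-contained contraction estimate where the paper leaves the final limiting step implicit; what it costs is only the extra geometric input of the median identity. Your deferral of $(e,13,e)$ to ``same method with $\tau$ and $\rho$ interchanged'' matches the paper's own level of detail, which also treats only one representative.
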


\begin{proof} We will show this for $(e,12,12)$, as the others are similar.  We have
\[G_0(e,12,23)=\left(\begin{array}{ccc}
0&0&1/2\\
0&1&0\\
1&0&1/2\end{array}\right)\]
This gives us the following:
\begin{align*}
\tau_n&=\rho_{n-1}\\
\rho_n&\leq\max(\tau_{n-1},\rho_{n-1})\\
\mu_n&=\frac{1}{2}\mu_{n-1}
\end{align*}
We also have
\[G_1(e,12,12)=\left(\begin{array}{ccc}
0&1&1/2\\
1&0&0\\
0&0&1/2\end{array}\right)\]
giving us
\begin{align*}
\tau_n&=\tau_{n-1}\\
\rho_n&=\frac{1}{2}\mu_{n-1}\\
\mu_n&\leq\max(\tau_{n-1},\rho_{n-1})
\end{align*}

If there are an infinite number of zeros and ones, then we know there are infinitely many instances of 01.  Suppose $i_n=0$ and $i_{n+1}=1$.  Then
\begin{align*}
\tau_n&=\rho_{n-1}\\
\rho_n&\leq\max(\tau_{n-1},\rho_{n-1})\\
\mu_n&=\frac{1}{2}\mu_{n-1}
\end{align*}
and
\begin{align*}
\tau_{n+1}&=\tau_n=\rho_{n-1}\\
\rho_{n+1}&=\frac{1}{2}\mu_n=\frac{1}{4}\mu_{n-1}\\
\mu_{n+1}&\leq\max(\tau_n,\rho_n)\leq\max(\tau_{n-1},\rho_{n-1})
\end{align*}

Then we have \[\lim_{n\to\infty}\tau_n=\lim_{n\to\infty}\rho_n=\frac{1}{4}\lim_{n\to\infty}\mu_n=0\]
which implies convergence to a point.  However, in situations where we have only a finite number of zeros or a finite number of ones we do not get convergence.
\end{proof}

\begin{theorem}  For any triangle partition map $(\sigma,\tau_0,\tau_1)$  in  the class $ (e,12,e),$ 
we have that $ \lim_{n\rightarrow \infty} \Gamma_{(\sigma,\tau_0,\tau_1)}(i_1,\dots,i_n) $ never converges to a point,  in which case,  $\Phi(\sigma, \tau_0, \tau_1)$ is a function.
\end{theorem}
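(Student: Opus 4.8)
The plan is to sidestep the side-length recursions used for the previous theorems and instead read off, directly from the two barycentric matrices, that every subtriangle on the Barycentric side shares a fixed vertex of $\triangle$, which already prevents the nested intersection from collapsing to a point. Since here $\tau_0=(12)$ and $\tau_1=e$, the barycentric matrices are
$$G_0(e,12,e)=G_0(12)=\begin{pmatrix}0&0&\tfrac12\\0&1&0\\1&0&\tfrac12\end{pmatrix},\qquad G_1(e,12,e)=G_1=\begin{pmatrix}1&0&\tfrac12\\0&1&0\\0&0&\tfrac12\end{pmatrix}.$$
Suppose the columns of $VG_{i_1}(e,12,e)\cdots G_{i_{n-1}}(e,12,e)$ are $w_1,w_2,w_3\in\R^3$, so that the ordered vertices of $\Gamma_{(e,12,e)}(i_1,\dots,i_{n-1})$ are $\pi(w_1),\pi(w_2),\pi(w_3)$. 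Right-multiplying by $G_0(e,12,e)$ yields the columns $w_3,\ w_2,\ \tfrac12(w_1+w_3)$, and right-multiplying by $G_1(e,12,e)$ yields $w_1,\ w_2,\ \tfrac12(w_1+w_3)$. In both cases the middle column is unchanged and the outer two columns are nonnegative combinations of $w_1$ and $w_3$. Hence, by induction from $w_1=v_1,\ w_2=v_2,\ w_3=v_3$: for every $n$ and every sequence $(i_1,i_2,\dots)$ the second vertex of $\Gamma_{(e,12,e)}(i_1,\dots,i_n)$ equals $\pi(v_2)=(1,0)$, while the other two vertices lie in $\pi\big(\{a v_1+b v_3:a,b\ge0\}\big)=\{(t,t):0\le t\le 1\}$, the hypotenuse of $\triangle$.

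Now fix a sequence $(i_1,i_2,\dots)$ and use the notation $\tau_n,\rho_n,\mu_n$ of Section~\ref{convergence} for the side lengths of $\Gamma_{(e,12,e)}(i_1,\dots,i_n)$. Since $v_2(n)=(1,0)$ for all $n$ while $v_1(n)$ and $v_3(n)$ lie on the line $y=x$, whose Euclidean distance from $(1,0)$ is $1/\sqrt{2}$, we get $\tau_n=|v_1(n)-v_2(n)|\ge 1/\sqrt{2}$ and $\rho_n=|v_2(n)-v_3(n)|\ge 1/\sqrt{2}$ for every $n$. Therefore $\lim_{n\to\infty}\tau_n$ and $\lim_{n\to\infty}\rho_n$ are positive, so $\lim_{n\to\infty}\Gamma_{(e,12,e)}(i_1,\dots,i_n)$ is never a single point. (The two matrices do give the recursions $\tau_n=\rho_{n-1}$, $\rho_n\le\max(\tau_{n-1},\rho_{n-1})$, $\mu_n=\tfrac12\mu_{n-1}$ for $i_n=0$, and $\tau_n=\tau_{n-1}$, $\rho_n\le\max(\tau_{n-1},\rho_{n-1})$, $\mu_n=\tfrac12\mu_{n-1}$ for $i_n=1$; by themselves these do not bound $\max(\tau_n,\rho_n)$ away from zero, which is why the earlier argument does not settle this case.) By the dichotomy recalled in Section~\ref{analog}, the intersection is thus always a nondegenerate segment of the hypotenuse emanating from $(1,0)$, and in particular $\Phi(e,12,e)$ is nowhere a function.

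The remaining members of the class are treated identically. The twin of $(e,12,e)$ from Lemma~\ref{pairing} has the same two barycentric matrices with the labels $0$ and $1$ exchanged, hence literally the same partition. The coordinate-change relatives from Corollary~\ref{permute}, together with the representatives $(e,123,e)$ and $(e,12,13)$ that the degenerate-Farey reduction of Section~\ref{reduction} merges into this class, again have two barycentric matrices each of which fixes one vertex of the ambient triangle and sends the other two into the opposite edge, so the same distance estimate applies after transporting by the relevant fixed projective change of coordinates — a bijection, hence one that preserves whether a nested family of triangles shrinks to a point. The only step needing real verification is the structural observation that both barycentric children fix a vertex of $\triangle$ and trap the other two vertices on the opposite edge; granting that, the conclusion is merely the elementary fact that a triangle one of whose vertices remains a fixed positive distance from the line spanned by the other two cannot become arbitrarily small.
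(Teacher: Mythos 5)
Your argument is correct, and it is in substance the paper's argument made explicit. The paper's ``proof'' of this theorem is a one-line deferral to the degenerate-Farey section, which observes that $F_0(e,12,e)$ and $F_1(e,12,e)$ fix $v_2$ and only recombine $v_1$ and $v_3$; you verify the same structural fact for the barycentric matrices $G_0(e,12,e)$ and $G_1(e,12,e)$ (columns $(w_3,w_2,\tfrac12(w_1+w_3))$ and $(w_1,w_2,\tfrac12(w_1+w_3))$), which is actually what the theorem requires, since the claim concerns $\Gamma_{(\sigma,\tau_0,\tau_1)}$ and not $\triangle_{(\sigma,\tau_0,\tau_1)}$ — a mismatch the paper glosses over. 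Your quantitative step (second vertex pinned at $(1,0)$, the other two confined to the line $y=x$, hence $\tau_n,\rho_n\ge 1/\sqrt2$ for all $n$) is exactly the missing metric content, and your handling of the other class members via the conjugation $G_i(\rho\sigma,\tau_i\rho^{-1})=\rho\,G_i(\sigma,\tau_0,\tau_1)\,\rho^{-1}$ is sound (the relevant point being that the change of coordinates is projective linear, hence a homeomorphism of the closed triangle, not merely a bijection). Two small remarks: the limit set is a segment from $(1,0)$ to a point of the hypotenuse, not a ``segment of the hypotenuse''; and you are right that the theorem's stated conclusion ``$\Phi(\sigma,\tau_0,\tau_1)$ is a function'' must be a copy-paste error from the preceding theorems — since no nested family of barycentric triangles collapses to a point, $\Phi(e,12,e)$ is everywhere segment-valued, i.e.\ never a function, exactly as you conclude.
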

 
 \begin{proof}  This will follow from work that we will do in section \ref{degenerate}.  
 
 \end{proof}

\section{Results} \label{results}
In this chapter we show five classes for which $\Phi_{(\sigma,\tau_0,\tau_1)}$ is singular, and exhibit strong evidence for two other classes to being  singular.  Four of these classes are shown through direct computation and a notion of what the "usual" Farey  sequence of a point looks like. The fifth class are what we called M\"{o}nkemeyer-type maps, for which their singularity of the associated $\Phi_{(\sigma,\tau_0,\tau_1)}$ comes directly from Panti's work in \cite{panti}, which in turn uses special properties of these TRIP maps that do not hold for TRIP maps in general.

   The final two (conjectured) classes involve the ergodic properties of the TRIP maps in these classes. 

\subsection{Four classes of singular maps via ``normality''}
In this section, we prove singularity for the associated $\Phi_{(\sigma,\tau_0,\tau_1)}$ for four  classes of TRIP maps through a computational approach.   We begin by presenting a clean framework for showing when $\Phi_{(\sigma,\tau_0,\tau_1)}$ is singular.


\subsubsection{On the importance of $\lim_{n\to\infty}\frac{s_n}{n}$}
We will need the following lemma, whose proof is a geometric fact:
\begin{lemma}\label{area}
The area of a triangle $T$ whose vertices are given in projective coordinates by the $3\times 3$ matrix $M$ with projective coordinates $x,y$ and $z$ being the entries in the top row of $M$ is given by\[|T|=\frac{1}{2}\frac{|\det(M)|}{xyz}\]
\end{lemma}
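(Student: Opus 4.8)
The plan is to reduce the statement to the classical determinantal (``shoelace'') formula for the area of a triangle in $\R^2$ and then keep track of how passing through the projection $\pi$ rescales the relevant determinant. Write the columns of $M$ as $w_1,w_2,w_3\in\R^3$, with $w_j=(x_j,y_j,z_j)^{T}$, so the top row of $M$ is $(x_1,x_2,x_3)$, abbreviated $x,y,z$ in the statement. The vertices of $T$ are then $P_j=\pi(w_j)=(y_j/x_j,\,z_j/x_j)$ for $j=1,2,3$, and we implicitly use that $x_j\neq 0$ for each $j$ (this is exactly the condition under which $\pi$ is defined, so it costs nothing).

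First I would recall the standard fact that for points $P_j=(u_j,t_j)\in\R^2$ the triangle they span has area
\[
|T|=\frac12\left|\det\begin{pmatrix}1&1&1\\ u_1&u_2&u_3\\ t_1&t_2&t_3\end{pmatrix}\right|,
\]
which follows from $|T|=\frac12\bigl|\det[\,P_2-P_1\ \ P_3-P_1\,]\bigr|$ after subtracting the first column from the other two and expanding along the top row.

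Next I would substitute $u_j=y_j/x_j$ and $t_j=z_j/x_j$ into this $3\times 3$ determinant and clear denominators one column at a time: multiplying the $j$-th column by $x_j$ turns the displayed matrix into exactly $M$ and multiplies the determinant by $x_1x_2x_3=xyz$. Hence
\[
\det\begin{pmatrix}1&1&1\\ y_1/x_1&y_2/x_2&y_3/x_3\\ z_1/x_1&z_2/x_2&z_3/x_3\end{pmatrix}=\frac{\det(M)}{xyz},
\]
and taking absolute values and multiplying by $\frac12$ yields $|T|=\frac12\,|\det(M)|/(xyz)$, as claimed.

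There is essentially no serious obstacle here; the lemma is a bookkeeping exercise. The only points requiring a little care are identifying which entries of $M$ serve as the homogenizing coordinates (the top row) versus the affine coordinates of the projected vertices, and handling orientation, which is dispatched by taking absolute values throughout. I would state it cleanly and verify the column-scaling step explicitly, since that is the one place a factor could go astray.
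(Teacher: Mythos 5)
Your proof is correct: the reduction to the shoelace determinant followed by scaling each column by its top entry is exactly the standard argument, and the column-scaling step is carried out cleanly. The paper itself offers no proof (it dismisses the lemma as "a geometric fact"), so your writeup supplies precisely the computation the authors left implicit; the only contextual remark worth adding is that the top-row entries $x,y,z$ are positive for all the matrices arising in the paper, which is why the denominator appears as $xyz$ rather than $|xyz|$.
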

We also need the following fact about our barycentric partition, whose proof is again a calcuation:
\begin{lemma}\label{half} The area of $\Gamma_{(\sigma, \tau_0, \tau_1)}  (i_1,\dots,i_n)$ is half the area of $\Gamma_{(\sigma, \tau_0, \tau_1)} (i_1,\dots,i_{n-1})$.
\end{lemma}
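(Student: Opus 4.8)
The plan is to reduce the claim to a single determinant computation via Lemma~\ref{area}, exploiting the fact (already noted in the discussion after Theorem~\ref{rational}) that $G_0(\sigma,\tau_0,\tau_1)$ and $G_1(\sigma,\tau_0,\tau_1)$ are Markov matrices.

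First I would fix a consistent choice of projective representatives. Write $M_n = V\,G_{i_1}(\sigma,\tau_0,\tau_1)\cdots G_{i_n}(\sigma,\tau_0,\tau_1)$, so that the columns of $M_n$ are projective coordinates for the vertices of $\Gamma_{(\sigma,\tau_0,\tau_1)}(i_1,\dots,i_n)$ and $M_n = M_{n-1}\,G_{i_n}(\sigma,\tau_0,\tau_1)$. Since the quantity $|\det M|/(xyz)$ in Lemma~\ref{area} is sensitive to rescaling of the columns of $M$, I want to check that with this particular matrix the top row of every $M_n$ is $(1,1,1)$. The base case is immediate: the top row of $V$ is $(1,1,1)$. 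For the inductive step, let $\mathbf{1}=(1,1,1)$ as a row vector; then $\mathbf{1}\sigma=\mathbf{1}$ and $\mathbf{1}\tau_0=\mathbf{1}=\mathbf{1}\tau_1$ because these are permutation matrices, while $\mathbf{1}G_0=\mathbf{1}G_1=\mathbf{1}$ because $G_0$ and $G_1$ have all column sums equal to $1$. Hence $\mathbf{1}\,G_{i_n}(\sigma,\tau_0,\tau_1)=\mathbf{1}$, so the top row of $M_n=M_{n-1}\,G_{i_n}(\sigma,\tau_0,\tau_1)$ equals the top row of $M_{n-1}$, which is $\mathbf{1}$ by induction.

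Next I would record the relevant determinants. One checks directly that $\det G_0=\det G_1=\tfrac{1}{2}$ ($G_1$ is upper triangular and $G_0$ needs only a one-line cofactor expansion along its first row), whereas $|\det\sigma|=|\det\tau_0|=|\det\tau_1|=1$ since permutation matrices have determinant $\pm1$; therefore $|\det G_{i}(\sigma,\tau_0,\tau_1)|=\tfrac{1}{2}$ for $i\in\{0,1\}$.

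Finally I would assemble the pieces: since the top rows of both $M_{n-1}$ and $M_n$ are $(1,1,1)$, Lemma~\ref{area} gives
\begin{align*}
|\Gamma_{(\sigma,\tau_0,\tau_1)}(i_1,\dots,i_n)| &= \tfrac{1}{2}\,|\det M_n| = \tfrac{1}{2}\,|\det M_{n-1}|\cdot|\det G_{i_n}(\sigma,\tau_0,\tau_1)| \\
&= \tfrac{1}{2}\cdot\tfrac{1}{2}\,|\det M_{n-1}| = \tfrac{1}{2}\,|\Gamma_{(\sigma,\tau_0,\tau_1)}(i_1,\dots,i_{n-1})|,
\end{align*}
with the convention that $\Gamma_{(\sigma,\tau_0,\tau_1)}(\,)=\pi(V)=\triangle$ (of area $\tfrac{1}{2}$) in the case $n=1$. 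There is no serious obstacle here; the only point requiring care is that Lemma~\ref{area} depends on the chosen scaling of the projective coordinates, which is precisely why the Markov/all-ones-row bookkeeping in the second paragraph is needed rather than a bare ``multiply the determinant by $\det G$'' argument.
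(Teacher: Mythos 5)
Your proof is correct and follows essentially the same route as the paper's: both arguments use the Markov (column-sums-one) property of $G_0(\sigma,\tau_0,\tau_1)$ and $G_1(\sigma,\tau_0,\tau_1)$ to keep the top row of $VG_{i_1}\cdots G_{i_n}$ equal to $(1,1,1)$, and then apply Lemma~\ref{area} together with $|\det G_i(\sigma,\tau_0,\tau_1)|=\tfrac{1}{2}$ and multiplicativity of the determinant. Your explicit induction on the top row and the remark about the scaling-sensitivity of Lemma~\ref{area} make the bookkeeping slightly more careful than the paper's sketch, but the substance is identical.
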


We are now ready to understand what a typical Barycentric $(\sigma, \tau_0, \tau_1)$ TRIP sequence looks like.

\begin{prop} Let $(\sigma,\tau_0,\tau_1)\in S_3^3$.  Consider a point $(x,y)\in\triangle$ with Barycentric $(\sigma, \tau_0, \tau_1)$ TRIP sequence $(i_1,i_2,\dots)$.  Then for almost all $(x,y)\in\triangle$ we have
\[\lim_{n\to\infty}\frac{\#\{i_k=1,1\leq k\leq n\}}{n}=\frac{1}{2}\]
That is to say, the set of points with normal Barycentric $(\sigma, \tau_0, \tau_1)$ TRIP sequence  has measure one under Lebesgue measure.
\end{prop}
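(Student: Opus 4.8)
The plan is to show that, with respect to the normalized Lebesgue measure $\lambda$ on $\triangle$ (so $\lambda(\triangle)=1$), the barycentric digit functions $X_k\colon\triangle\to\{0,1\}$ — where $X_k(x,y)$ is the $k$-th entry $i_k$ of the Barycentric $(\sigma,\tau_0,\tau_1)$ TRIP sequence of $(x,y)$ — form an i.i.d.\ sequence of fair coin flips; the proposition is then immediate from the strong law of large numbers. In effect, the barycentric partition of $\triangle$ is just the binary partition in disguise, exactly as the sets $\mathcal{B}_n$ of Section~2 consist precisely of the dyadic rationals with denominator $2^n$.

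The first step is the structural fact that for every word $(i_1,\dots,i_{n-1})$ the triangle $\Gamma_{(\sigma,\tau_0,\tau_1)}(i_1,\dots,i_{n-1})$ is the union of its two children $\Gamma_{(\sigma,\tau_0,\tau_1)}(i_1,\dots,i_{n-1},0)$ and $\Gamma_{(\sigma,\tau_0,\tau_1)}(i_1,\dots,i_{n-1},1)$, and that these two children overlap only along a common edge. This follows from the matrix description: writing $w_1,w_2,w_3$ for the columns of $VG_{i_1}(\sigma,\tau_0,\tau_1)\cdots G_{i_{n-1}}(\sigma,\tau_0,\tau_1)$ and letting $w_a,w_b,w_c$ be their reordering produced by $\sigma$, the column permutations coming from $\tau_0,\tau_1$ do not change the triangles as point sets, and one computes that the two children have vertex sets $\{\pi(w_b),\pi(w_c),\pi(w_a+w_c)\}$ and $\{\pi(w_a),\pi(w_b),\pi(w_a+w_c)\}$. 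Since $\pi(w_a+w_c)$ lies on the segment $[\pi(w_a),\pi(w_c)]$, these two triangles tile the parent $\{\pi(w_a),\pi(w_b),\pi(w_c)\}$ and meet only in the segment $[\pi(w_b),\pi(w_a+w_c)]$. Combined with Lemma~\ref{half}, which says each child has half the $\lambda$-area of its parent, an immediate induction gives
\[\lambda\bigl(\Gamma_{(\sigma,\tau_0,\tau_1)}(i_1,\dots,i_n)\bigr)=2^{-n}\]
for every word $(i_1,\dots,i_n)$.

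Next I would make the probabilistic translation precise. Off the countable union $N=\bigcup_n\bigcup_{(i_1,\dots,i_n)}\partial\Gamma_{(\sigma,\tau_0,\tau_1)}(i_1,\dots,i_n)$, which is $\lambda$-null, each point lies in the interior of exactly one child at every stage, so each $X_k$ is defined and Borel measurable, and $\{X_1=a_1,\dots,X_n=a_n\}$ agrees, up to the null set $N$, with $\Gamma_{(\sigma,\tau_0,\tau_1)}(a_1,\dots,a_n)$. Hence $\lambda(X_1=a_1,\dots,X_n=a_n)=2^{-n}$ for all $(a_1,\dots,a_n)\in\{0,1\}^n$; summing over the remaining coordinates gives $\lambda(X_k=a)=\tfrac12$ for each $k$ and $a$, and therefore $\lambda(X_1=a_1,\dots,X_n=a_n)=\prod_{k=1}^n\lambda(X_k=a_k)$, so $(X_k)_{k\ge1}$ is an i.i.d.\ Bernoulli$(\tfrac12)$ sequence on $(\triangle,\lambda)$. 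Applying Kolmogorov's strong law of large numbers — equivalently, Borel's normal number theorem — to this bounded i.i.d.\ sequence gives
\[\lim_{n\to\infty}\frac{\#\{i_k=1,\ 1\le k\le n\}}{n}=\lim_{n\to\infty}\frac1n\sum_{k=1}^n X_k=\mathbb{E}[X_1]=\frac12\]
for $\lambda$-almost every $(x,y)\in\triangle$, which is the assertion.

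The only genuine work, and the place I would be most careful, is the structural step: confirming, once and for all rather than case by case, that for each of the $216$ triples $(\sigma,\tau_0,\tau_1)$ the two barycentric children of every triangle are essentially disjoint and exhaust the parent. Lemma~\ref{half} already supplies the area bookkeeping, so what remains is purely combinatorial — identifying which permuted column plays the role of the new vertex $\pi(w_a+w_c)$ and checking the two resulting triangles share an edge rather than overlapping in positive measure. Everything after that, the passage to i.i.d.\ digit functions and the appeal to the strong law, is routine.
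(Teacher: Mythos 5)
Your proposal is correct and follows essentially the same route as the paper: Lemma~\ref{half} makes each barycentric digit a fair coin flip, and a law of large numbers finishes the argument. You are in fact somewhat more careful than the paper's one-line proof --- you verify that the two children tile the parent (so the cylinder sets genuinely have measure $2^{-n}$ and the digits are measurable and independent), and you correctly invoke the strong law of large numbers / Borel's normal number theorem, whereas the paper appeals to the central limit theorem, which is not the right tool for an almost-everywhere convergence statement.
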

\begin{proof}
(For this proof, we suppress in the notation the  `` $(\sigma, \tau_0, \tau_1)$.'')
By Lemma \ref{half}, $\Gamma(i_1,\dots,i_n)$ has half the area of $\Gamma(i_1,\dots,i_{n-1})$.  Then given a point $(x,y)\in\triangle(i_1,\dots,i_{n-1})$, the probability that $i_n=1$ is 0.5.  Then by the central limit theorem, we have that
\[\lim_{n\to\infty}\frac{\#\{i_k=1,1\leq k\leq n\}}{n}=\frac{1}{2}\]
almost everywhere in $\triangle$.
\end{proof}
Then in order to show that a given $\Phi_{(\sigma,\tau_0,\tau_1)}$ is singular, we simply need to show that the set of points that have normal  Farey $(\sigma, \tau_0, \tau_1)$ TRIP sequence $(i_1,i_2,\dots)$  has measure 0.

We want a way of getting at the proportion of 1's and 0's in the Farey $(\sigma, \tau_0, \tau_1)$ TRIP sequence  for a random $(x,y)\in\triangle_F$.  It turns out that we can do this using the multiplicative version of TRIP maps, discussed in subsection \ref{multiplicative}.  Consider a point $(x,y)\in$ with multiplicative TRIP sequence $(a_1,a_2,a_3,\dots)$.  Define
 \[s_n=a_1+\cdots+a_n.\]  
 
Then we have that 
\[\lim_{n\to\infty}\frac{\#\{i_k=1,1\leq k\leq n\}}{n}=\frac{1}{2}\]
is equivalent to 
\[\lim_{n\to\infty}\frac{s_n}{s_n+n}=\frac{1}{2}.\]
Inverting, this is the same as
\[\lim_{n\to\infty}\frac{s_n+n}{s_n}=\lim_{n\to\infty}1+\frac{n}{s_n}=2\]
which is true if and only if
\[\lim_{n\to\infty}\frac{s_n}{n}=1.\]
Then $\Phi_{(\sigma,\tau_0,\tau_1)}$ is singular if, almost everywhere in $\triangle$, 
\[\lim_{n\to\infty}\frac{s_n}{n}\neq1.\]

We  find four classes for which this is true, and then, in the following section, show that this is true five  classes under an assumption of ergodicity. 

\subsubsection{Understanding $\lim_{n\to\infty}\frac{s_n}{n}$}

In this section we show directly that  the four classes of maps represented by $(e,e,e)$, $(e,e,12)$, $(e,12,e )$ and $(e,12,12)$ have for each that the set of elements in $\triangle$ having $\lim_{n\to\infty}\frac{s_n}{n}=1$ has measure zero.  We will then show that ergodicity in the case of five other classes gives us that this limit is also infinity, and hence, if any of the maps for one of these classes is ergodic, we will have its Minkowksi question mark function being similar.  The reader should note that this method is very similar to the proof of Theorem 14 of \cite{beaver}.

Given $(\sigma,\tau_0,\tau_1)\in S_3^3$, consider the following set:
\[M:=\{(x,y)\in\triangle:\lim_{n\to\infty}\frac{s_n}{n}<\infty\}\]
Certainly the set of elements for which $\lim_{n\to\infty}\frac{s_n}{n}=1$ is included in the set $M$.
Thus if $\lambda(M) = 0$, we will be done.

 Defining
\[M_N:=\{(x,y)\in\triangle:\forall n\geq1,\frac{s_n}{n}<N\}\]
we note that 
\[M=\bigcup_{N=1}^{\infty}M_N.\]
If we can show that $\lambda(M_N)=0$ for each $N$, we will have that $\lambda(M)=0$.  Calculating the area of $M_N$ is difficult, though, so we define the following set
\[\tilde{M}_N:=\{(x,y)\in\triangle:\forall n, a_n<nN\}\]
Since $0 < a_n < a_1 + \cdots + a_n = s_n$, we have  $M_N\subset\tilde{M}_N$. To get a bound on $\lambda(\tilde{M}_N)$ we recursively define the family of sets $\tilde{M}_N(k)$ by
\begin{align*}
\tilde{M}_N(1)&:=\{(x,y)\in\triangle:a_1<N\}\\
\tilde{M}_N(k)&:=\{(x,y)\in\tilde{M}_N(k-1):a_k<kN\}
\end{align*}
Then
\[\tilde{M}_N=\bigcap_{n=1}^{\infty}\tilde{M}_N(k)\]
Our goal is to show, for $(e,e,e)$, $(e,e,12)$, $(e,12,e )$ and $(e,12,12)$,  that $\lambda(\tilde{M}_N(k))\leq\frac{c(k)-1}{c(k)}\lambda(\tilde{M}_N(k-1))$ for some constant $c(k)$ that is linear in $kN$.  If this is the case, then $\lambda(\tilde{M}_N)=0$, which follows from

\begin{lemma} Suppose $\lambda(\tilde{M}_N(k))\leq\frac{akN+c-1}{akN+c}\lambda(\tilde{M}_N(k-1))$ for some positive constants $c$ and $d$.  Then $\lambda(\tilde{M}_N)=0$.
\end{lemma}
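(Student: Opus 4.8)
The plan is to exploit the nested structure $\tilde{M}_N=\bigcap_{k\geq 1}\tilde{M}_N(k)$ together with the hypothesized contraction estimate, thereby reducing the whole statement to the divergence of a harmonic-type series. First I would note that the sets $\tilde{M}_N(k)$ are decreasing in $k$ by their very definition ($\tilde{M}_N(k)\subseteq\tilde{M}_N(k-1)$), that each is measurable (each is cut out from $\triangle$ by finitely many conditions on the multiplicative digits $a_j$, which are measurable functions), and that they all have finite measure since they sit inside $\triangle$. Hence continuity of $\lambda$ from above applies and gives $\lambda(\tilde{M}_N)=\lim_{k\to\infty}\lambda(\tilde{M}_N(k))$.

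Next I would iterate the hypothesis. Setting $\tilde{M}_N(0):=\triangle$ and applying $\lambda(\tilde{M}_N(k))\leq\frac{akN+c-1}{akN+c}\lambda(\tilde{M}_N(k-1))$ repeatedly yields
\[\lambda(\tilde{M}_N(k))\leq\lambda(\triangle)\prod_{j=1}^{k}\frac{ajN+c-1}{ajN+c}=\lambda(\triangle)\prod_{j=1}^{k}\left(1-\frac{1}{ajN+c}\right).\]
I would then invoke the standard criterion for infinite products: for $0\leq x_j<1$ one has $\prod_{j=1}^{\infty}(1-x_j)=0$ if and only if $\sum_{j=1}^{\infty}x_j=\infty$. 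Here $x_j=\frac{1}{ajN+c}$, and since $N\geq 1$ is fixed and $a,c>0$ we have $x_j\geq\frac{1}{(aN+c)j}$, so $\sum_j x_j$ diverges by comparison with the harmonic series. Therefore $\prod_{j=1}^{\infty}\bigl(1-\tfrac{1}{ajN+c}\bigr)=0$, and letting $k\to\infty$ gives $\lambda(\tilde{M}_N)\leq\lambda(\triangle)\cdot 0=0$, as desired.

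The argument is essentially routine; there is no genuine obstacle. The only points that need a word of care are the base case $\tilde{M}_N(0)=\triangle$ with $\lambda(\triangle)<\infty$ (so that continuity from above is legitimate) and the citation of the infinite-product criterion, which itself follows from $1-x\leq e^{-x}$ and the divergence of $\sum x_j$. Everything else is the elementary fact that a product $\prod_j(1-1/L_j)$ with $L_j$ growing linearly in $j$ decays to zero.
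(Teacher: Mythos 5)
Your proof is correct and follows essentially the same route as the paper: iterate the recursive bound to obtain the infinite product $\prod_k\bigl(1-\tfrac{1}{akN+c}\bigr)$ and show it vanishes because $\sum_k\tfrac{1}{akN+c}$ diverges like the harmonic series (the paper phrases this via reciprocals, logarithms, and the integral test, which is the same computation). Your added remarks on measurability and continuity from above are sound and slightly more careful than the paper's write-up, but they do not change the substance of the argument.
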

\begin{proof}
Assuming the hypothesis and using the fact that $\tilde{M}_N=\bigcap_{k=1}^\infty\tilde{M}_N(k)$ we have that 
\[\lambda(\tilde{M}_N)\leq\prod_{k=2}^\infty\frac{akN+c-1}{akN+c}\]
Showing this product is 0 is equivalent to showing that its reciprocal 
\[\prod_{k=2}^\infty\frac{akN+c}{akN+c-1}=\prod_{k=1}^\infty\left(1+\frac{1}{akN+c-1}\right)=\infty\]
Taking logarithms, this is the same as showing that the series
\[\sum_{k=2}^\infty\log\left(1+\frac{1}{(aN)k+c-1}\right)=\infty\]
which follows by the integral test.  Then we are done.
\end{proof}

  Consider $\tilde{M}_N(k-1)$.  We know this will be made up of the subtriangles of the form $VF_1^{a_1}F_0\cdots F_1^{a_{k}-1}F_0$ where each $a_i<iN$.  Consider one of these subtriangles and denote it $T$.  We define 
\[T_k:=\{(x,y\in T:a_k\geq kN\}\]
Then 
\[\tilde{M}_N(k)=\bigcup_{T\in\tilde{M}_N(k-1)} (T-T_{k}).\]
Now given $T$, let $x,y$ and $z$ denote the projective coordinates of its vertices.  As we will see, it turns out that we can find formulas for $F_1^n$ for the  triangle maps $(e,e,e)$, $(e,e,12)$, $(e,12,e )$ and $(e,12,12)$. Using these formulas, we can get an expression for the area of $T_k$ in terms of $x,y,z$ and $k$.  Ideally, we get some that look like 
\[|T_k|\geq\frac{1}{c(k)}\frac{1}{xyz}\]
where $c(k)$ is a linear function with respect to $k$.  Then we get that 
\[|T-T_k|\leq\frac{c(k)-1}{c(k)}|T|\]
which implies that 
\[\lambda(\tilde{M}_N(k))\leq\frac{c(k)-1}{c(k)}\lambda(\tilde{M}_N(k-1)).\]
Then, if $c(k)$ is linear in $k$, we have shown that this TRIP map gives rise to a singular function.  We will now go through the process of computing $c(k)$ for the four classes mentioned above.  The reader should note that these calculations all follow the same structure, but we present them all for sake of completeness.

We begin with $(e,e,e)$.  (We will suppress the $(e,e,e)$ in the various $F_0(e,e,e)$ and $F_1(e,e,e).$)  We have
\[F_1^k=\left(\begin{array}{ccc}
1&0&k\\
0&1&0\\
0&0&1\end{array}\right)\]
\[F_1^kF_0=\left(\begin{array}{ccc}
0&k&k+1\\
1&0&0\\
0&1&1\end{array}\right)\]
This gives us the following recurrence relations:
\begin{align*}
x_n&=y_{n-1}\\
y_n&=a_nx_{n-1}+z_{n-1}\\
z_n&=(a_n+1)x_{n-1}+z_{n-1}
\end{align*}
Then $z_n\geq y_n\geq x_n$ for all $n$.  Then considering a subtriangle $T$ represented by a matrix whose top row is $(x,y,z)$ we have 
\[\text{area}(T_k)=\frac{1}{xy(kNx+z)}\geq\frac{1}{kN+1}\frac{1}{xyz}\]
\begin{center}
\begin{tikzpicture}[scale=3]
\draw (0,0)--(2,-1/3);
\draw (0,0)--(3/2,1);
\draw (2,-1/3)--(3/2,1);
\draw (2,-1/3)--(1/2,1/3);
\node [left] at (0,0) {$v_1$};
\node [below] at (2,-1/3) {$v_2$};
\node [above] at (3/2,1) {$v_3$};
\node [left] at (1/2,1/3) {$kv_1+v_3$};
\draw[fill=gray]  (0,0) -- (2,-1/3) -- (1/2,1/3)-- cycle;
\end{tikzpicture}
\end{center}

For $(e,e,12)$ (where we again suppress the $(e,e,12)$), we have
\begin{align*}
F_1^{2k}&=\left(\begin{array}{ccc}
1&0&k\\
0&1&k\\
0&0&1\end{array}\right)&F_1^{2k+1}&=\left(\begin{array}{ccc}
0&1&k+1\\
1&0&k\\
0&0&1\end{array}\right)\\
F_1^{2k}F_0&=\left(\begin{array}{ccc}
0&k&k+1\\
1&k&k\\
0&1&1\end{array}\right)&F_1^{2k+1}F_0&=\left(\begin{array}{ccc}
1&k+1&k+1\\
0&k&k+1\\
0&1&1\end{array}\right)\end{align*}
This gives us the following recurrence relations:
\begin{align*}
a_n&=2k_n&a_n&=2k_n+1\\
x_n&=y_{n-1}&x_n&=x_{n-1}\\
y_n&=k_n(x_{n-1}+y_{n-1})+z_{n-1}&y_n&=k_n(x_{n-1}+y_{n-1})+x_{n-1}+z_{n-1}\\
z_n&=k_n(x_{n-1}+y_{n-1})+x_{n-1}+z_{n-1}&z_n&=(k_n+1)(x_{n-1}+y_{n-1})+z_{n-1}
\end{align*}
Then $z_n\geq y_n\geq x_n$ for all $n$.  Then considering a subtriangle $T$ represented by a matrix whose top row is $(x,y,z)$ we have 
\[\text{area}(T_k)\geq\frac{1}{xy((kN+1)/2(x+y)+z)}\geq\frac{1}{kN+2}\frac{1}{xyz}\]

\begin{align*}
\begin{tikzpicture}[scale=3]
\draw (0,0)--(2,-1/3);
\draw (0,0)--(3/2,1);
\draw (2,-1/3)--(3/2,1);
\draw (2,-1/3)--(7/5,1/4);
\draw (0,0)--(7/5,1/4);
\node [left] at (0,0) {$v_1$};
\node [below] at (2,-1/3) {$v_2$};
\node [above] at (3/2,1) {$v_3$};
\draw[fill=gray]  (0,0) -- (2,-1/3) -- (7/5,1/4)-- cycle;
\node [below] at (1,-1/3) {$a_n=2k_n$};
\end{tikzpicture}
&&
\begin{tikzpicture}[scale=3]
\draw (0,0)--(2,-1/3);
\draw (0,0)--(3/2,1);
\draw (2,-1/3)--(3/2,1);
\draw (2,-1/3)--(4/5,1/4);
\draw (0,0)--(4/5,1/4);
\node [left] at (0,0) {$v_1$};
\node [below] at (2,-1/3) {$v_2$};
\node [above] at (3/2,1) {$v_3$};
\draw[fill=gray]  (0,0) -- (2,-1/3) -- (4/5,1/4)-- cycle;
\node [below] at (1,-1/3) {$a_n=2k_n+1$};
\end{tikzpicture}
\end{align*}

For $(e,12,e)$ (and again  we  suppress the $(e,e,12)$)  we have
\[F_1^k=\left(\begin{array}{ccc}
1&0&k\\
0&1&0\\
0&0&1\end{array}\right)\]
\[F_1^kF_0=\left(\begin{array}{ccc}
k&0&k+1\\
0&1&0\\
1&0&1\end{array}\right)\]
This gives us the following recurrence relations:
\begin{align*}
x_n&=a_nx_{n-1}+z_{n-1}\\
y_n&=y_{n-1}\\
z_n&=(a_n+1)x_{n-1}+z_{n-1}
\end{align*}
Then $z_n\geq x_n\geq y_n$ for all $n$.  Then considering a subtriangle $T$ represented by a matrix whose top row is $(x,y,z)$ we have 
\[\text{area}(T_k)=\frac{1}{xy(kNx+z)}\geq\frac{1}{kN+1}\frac{1}{xyz}\]

\begin{center}
\begin{tikzpicture}[scale=3]
\draw (0,0)--(2,-1/3);
\draw (0,0)--(3/2,1);
\draw (2,-1/3)--(3/2,1);
\draw (2,-1/3)--(1/2,1/3);
\node [left] at (0,0) {$v_1$};
\node [below] at (2,-1/3) {$v_2$};
\node [above] at (3/2,1) {$v_3$};
\node [left] at (1/2,1/3) {$kv_1+v_3$};
\draw[fill=gray]  (0,0) -- (2,-1/3) -- (1/2,1/3)-- cycle;
\end{tikzpicture}
\end{center}

For $(e,12,12)$ (with us again suppressing  the $(e,12,12)$), we have
\begin{align*}
F_1^{2k}&=\left(\begin{array}{ccc}
1&0&k\\
0&1&k\\
0&0&1\end{array}\right)&F_1^{2k+1}&=\left(\begin{array}{ccc}
0&1&k+1\\
1&0&k\\
0&0&1\end{array}\right)\\
F_1^{2k}F_0&=\left(\begin{array}{ccc}
k&0&k+1\\
k&1&k\\
1&0&1\end{array}\right)&F_1^{2k+1}F_0&=\left(\begin{array}{ccc}
k+1&1&k+1\\
k&0&k+1\\
1&0&1\end{array}\right)\end{align*}
This gives us the following recurrence relations:
\begin{align*}
a_n&=2k_n&a_n&=2k_n+1\\
x_n&=k_n(x_{n-1}+y_{n-1})+z_{n-1}&x_n&=k_n(x_{n-1}+y_{n-1})+x_{n-1}+z_{n-1}\\
y_n&=y_{n-1}&y_n&=x_{n-1}\\
z_n&=k_n(x_{n-1}+y_{n-1})+x_{n-1}+z_{n-1}&z_n&=(k_n+1)(x_{n-1}+y_{n-1})+z_{n-1}
\end{align*}
Then $z_n\geq x_n\geq y_n$ for all $n$.  Then considering a subtriangle $T$ represented by a matrix whose top row is $(x,y,z)$ we have 
\[\text{area}(T_k)\geq\frac{1}{xy((kN+1)/2(x+y)+z)}\geq\frac{1}{kN+2}\frac{1}{xyz}\]

\begin{align*}
\begin{tikzpicture}[scale=3]
\draw (0,0)--(2,-1/3);
\draw (0,0)--(3/2,1);
\draw (2,-1/3)--(3/2,1);
\draw (2,-1/3)--(7/5,1/4);
\draw (0,0)--(7/5,1/4);
\node [left] at (0,0) {$v_1$};
\node [below] at (2,-1/3) {$v_2$};
\node [above] at (3/2,1) {$v_3$};
\draw[fill=gray]  (0,0) -- (2,-1/3) -- (7/5,1/4)-- cycle;
\node [below] at (1,-1/3) {$a_n=2k_n$};
\end{tikzpicture}
&&
\begin{tikzpicture}[scale=3]
\draw (0,0)--(2,-1/3);
\draw (0,0)--(3/2,1);
\draw (2,-1/3)--(3/2,1);
\draw (2,-1/3)--(4/5,1/4);
\draw (0,0)--(4/5,1/4);
\node [left] at (0,0) {$v_1$};
\node [below] at (2,-1/3) {$v_2$};
\node [above] at (3/2,1) {$v_3$};
\draw[fill=gray]  (0,0) -- (2,-1/3) -- (4/5,1/4)-- cycle;
\node [below] at (1,-1/3) {$a_n=2k_n+1$};
\end{tikzpicture}
\end{align*}

Then $c(k)$ is linear for the classes represented by $(e,e,e)$, $(e,e,12)$, $(e,12,e)$, and $(e,12,12)$.  This means that each of these classes gives rise to a singular function.  We list out the maps in $(e,e,e)$, $(e,e,12)$ and $(e,12,12)$ below, leaving the class $(e,12,e)$ to be treated in greater detail in the next section.
\begin{align*}
&(e,e,e)&&(12,12,12)&&(13,13,13)&&(23,23,23)&&(123,132,132)&&(132,123,123)\\
&(13,12,12)&&(123,e,e)&&(e,123,123)&&(132,132,132)&&(12,23,23)&&(13,13,13)\\
&(e,e,12)&&(12,12,e)&&(13,13,123)&&(23,23,132)&&(123,132,23)&&(132,123,13)\\
&(13,e,12)&&(123,12,e)&&(e,13,123)&&(132,23,132)&&(12,132,23)&&(13,123,13)\\
&(e,e,123)&&(12,12,23)&&(13,13,12)&&(23,23,13)&&(123,132,e)&&(132,123,132)\\
&(e,13,12)&&(12,132,12)&&(13,e,123)&&(23,123,132)&&(123,12,23)&&(132,23,13)\\
&(e,12,12)&&(12,e,e)&&(13,123,123)&&(23,132,132)&&(123,23,23)&&(132,13,13)\\
&(13,e,e)&&(123,12,12)&&(e,13,13)&&(132,23,23)&&(12,132,132)&&(13,123,123)\\
\end{align*}

We can go through similar calculations as above to attempt to calculate $c(k)$ for any of the 108 polynomial TRIP maps.  Unfortunately, for the remaining maps not in these classes we run into one of two problems.  Either $c(k)$ is quadratic, which does not give us what we want, or we can't actually calculate $c(k)$.

\subsection{Degenerate Farey Maps}\label{degenerate}
We have just shown that $\Phi(e, 12, e)$ is singular.  There is another method for showing this singularness. 
As mentioned earlier, it is certainly not the case that the nested triangles $\triangle_{\sigma, \tau_0, \tau_1)}(i_1, \ldots , i_n) $ converge to a point.  There are some maps for which this nested sequence will never converge to a point, namely for what we call {\it degenerate TRIP maps}.
Degenerate TRIP maps fix one of the original vertices of $\triangle$ and partition the opposite side according to the same Farey division of the unit interval.  The TRIP map for $(e, 12, e)$ is degenerate, as both 
$$F_0(e, 12, e) = \left(\begin{array}{ccc} 0 & 0 & 1 \\ 0 & 1 & 0 \\ 1 & 0 & 1 \end{array} \right) \; \mbox{and} \; F_1(e, 12, e) = \left(\begin{array}{ccc} 1 & 0 & 1 \\ 0 & 1 & 0 \\ 0 & 0 & 1 \end{array} \right) , $$
meaning that 
$$(v_1, v_2, v_3 )F_0(e, 12, e) =( v_3, v_2, v_1 + v_3) \; \mbox{and}  \; (v_1, v_2, v_3 )F_1(e, 12, e) = (v_1, v_2,  v_1 +  v_3 ),$$
 leaving the vertex $v_2$ fixed,
 By computation, one can show there are  three possible partitionings, namely :
\begin{align*}
&\begin{tikzpicture}[scale=4]
\draw (0,0)--(1,1);
\draw (0,0)--(1,0);
\draw (1,0)--(1,1);
\draw (1,0)--(1/2,1/2);
\draw (1,0)--(1/3,1/3);
\draw (1,0)--(2/3,2/3);
\draw (1,0)--(1/4,1/4);
\draw (1,0)--(3/4,3/4);
\draw (1,0)--(2/5,2/5);
\draw (1,0)--(3/5,3/5);
\end{tikzpicture}&&
\begin{tikzpicture}[scale=4]
\draw (0,0)--(1,1);
\draw (0,0)--(1,0);
\draw (1,0)--(1,1);
\draw (0,0)--(1,1/2);
\draw (0,0)--(1,1/3);
\draw (0,0)--(1,2/3);
\draw (0,0)--(1,1/4);
\draw (0,0)--(1,3/4);
\draw (0,0)--(1,2/5);
\draw (0,0)--(1,3/5);
\end{tikzpicture}&&
\begin{tikzpicture}[scale=4]
\draw (0,0)--(1,1);
\draw (0,0)--(1,0);
\draw (1,0)--(1,1);
\draw (1,1)--(1/2,0);
\draw (1,1)--(1/3,0);
\draw (1,1)--(2/3,0);
\draw (1,1)--(1/4,0);
\draw (1,1)--(3/4,0);
\draw (1,1)--(2/5,0);
\draw (1,1)--(3/5,0);
\end{tikzpicture}&
\end{align*}
The TRIP map for $(e, 12, e)$, for which we have already shown that $\Phi(e, 12, e)$ is singular, is degenerate.

The $\Phi(\sigma, \tau_0, \tau_1)$ associated with these classes of maps are nice because we can define them in terms of the original Question Mark Function.

For the permutations that fix $(0,0)$, we have that $$\Phi(1,y)=(1,?(y)), \Phi(x,0)=(x,0), \Phi(x,x)=(x,x),$$ and that any point $(x,y)$ on the interior of $\triangle$ on the line with slope $\alpha$ will be sent to the point on the line with slope $?(\alpha)$ that the same proportion of the distance along the line.

For the permutations that fix $(1,0)$, we have that $$\Phi(x,x)=(?(x),?(x)), \Phi(x,0)=(x,0), \Phi(1,y)=(1,y),$$ and that any point $(x,y)$ on the interior of $\triangle$ on the line that passes through $(\alpha,\alpha)$ will be sent to appropriate point on the line passing through $(?(\alpha),?(\alpha))$.

For the permutations that fix $(1,1)$, we have that $$\Phi(x,0)=(?(x),0), \Phi(x,x)=(x,x), \Phi(1,y)=(1,y),$$ and that any point $(x,y)$ on the interior of $\triangle$ on the line that passes through $(\alpha,0)$ will be sent to appropriate point on the line passing through $(?(\alpha),0)$.

By the singularity of $?(x)$, we have that $\Phi_{(\sigma,\tau_0,\tau_1)}$ for any $(\sigma,\tau_0,\tau_1)$ belonging to the degenerate Farey class will be singular.  To see this, consider the permutation $(e,12,e)$ which gives a Farey partitioning of the hypotenuse of $\triangle$.  Then take the set of points $(A)$ of measure 1 on which $?(x)$ is singular, meaning the measure of $?(A)$ is 0.  Then the set of line segments connecting the points in the form $(a,a)$ where $a\in A$ on the hypotenuse to the vertex $(1,0)$ will have full measure on $\triangle$. Denote this set as $B$.  The $\Phi_{(e,12,e)}(B)$ will have measure 0 because $?(A)$ has measure 0.  Then because $\Phi{(e,12,e)}$ is singular we have that the remaining maps in this class are singular, which we list below:
\begin{align*}
&(e,12,e)&&(e,12,13)&&(e,123,e)&&(e,123,13)\\
&(13,12,e)&&(13,12,13)&&(13,123,e)&&(13,123,13)\\
&(12,e,12)&&(12,e,132)&&(12,23,12)&&(12,23,132)\\
&(123,e,12)&&(123,e,132)&&(123,23,12)&&(123,23,132)\\
&(23,13,23)&&(23,13,123)&&(23,132,23)&&(23,132,123)\\
&(132,13,23)&&(132,13,123)&&(132,132,23)&&(132,132,123)
\end{align*}

\subsection{The ergodic cases}

In Section \ref{multiplicative}, each $(\sigma, \tau_0, \tau_1) \in S_3^3$ defines the multiplicative triangle  partition map

\begin{prop}\label{ergodic} If the multiplicative triangle partition map  $T^G(\sigma, \tau_0, \tau_1)$ is ergodic and the associated $\triangle_k (\sigma, \tau_0, \tau_1)$ satisfy that the area of $\triangle_k (\sigma, \tau_0, \tau_1)$ is $\frac{1}{(k+1)(k+2)}$, then 
\[\lim_{n\to\infty}\frac{s_n}{n}=\infty\]
almost everywhere.
\end{prop}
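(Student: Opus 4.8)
The plan is to recognize $s_n/n$ as a Birkhoff ergodic average for $T^G(\sigma,\tau_0,\tau_1)$ and then invoke the divergence form of the pointwise ergodic theorem for nonnegative, non-integrable functions. First I would introduce the first-digit function: set $a(x,y)=k$ when $(x,y)$ lies in the interior of $\triangle_k(\sigma,\tau_0,\tau_1)$, a measurable map $a:\triangle\to\{0,1,2,\dots\}$ defined off a set of Lebesgue measure zero. By the definition of the multiplicative sequence in Section \ref{multiplicative}, the $i$th multiplicative digit of $(x,y)$ is $a_i=a\circ(T^G)^{i-1}(x,y)$, so
\[
\frac{s_n}{n}=\frac1n\sum_{j=0}^{n-1}a\bigl((T^G)^{j}(x,y)\bigr).
\]
Thus the proposition is exactly the statement that the ergodic averages of the nonnegative function $a$ tend to $+\infty$ almost everywhere.

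The next step is to show $a$ is not integrable against the invariant measure. Let $\mu$ be the $T^G$-invariant probability measure with respect to which $T^G$ is ergodic; as is standard for these maps, $\mu$ is equivalent to Lebesgue measure $\lambda$ with Radon--Nikodym density bounded below by a constant $c>0$ (in the clean case $\mu=\lambda$ this step is immediate). Since $a$ vanishes on $\triangle_0$ and the hypothesis gives $\lambda(\triangle_k)=\tfrac1{(k+1)(k+2)}$,
\[
\int_\triangle a\,d\mu=\sum_{k\ge 1}k\,\mu(\triangle_k)\;\ge\; c\sum_{k\ge 1}\frac{k}{(k+1)(k+2)}\;\ge\;\frac c4\sum_{k\ge 2}\frac1k\;=\;\infty,
\]
using $k+2\le 2k$ for $k\ge 2$. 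So $a\ge 0$ but $a\notin L^1(\mu)$, which is the mechanism that forces divergence; this is the same phenomenon that makes $\tfrac1n(a_1+\cdots+a_n)\to\infty$ for ordinary continued fractions and parallels the proof of Theorem 14 of \cite{beaver}.

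With this in hand I would run the standard truncation argument. For each integer $M\ge 1$ put $a_M=\min(a,M)\in L^1(\mu)$; Birkhoff's theorem together with ergodicity of $T^G$ gives $\frac1n\sum_{j<n}a_M\circ(T^G)^j\to\int a_M\,d\mu$ for $\mu$-a.e.\ point, and since $a\ge a_M$ this yields $\liminf_n \frac{s_n}{n}\ge\int a_M\,d\mu$ on a set of full $\mu$-measure. Intersecting these full-measure sets over $M\in\N$ and letting $M\to\infty$, monotone convergence gives $\int a_M\,d\mu\uparrow\int a\,d\mu=\infty$, so $\lim_n\frac{s_n}{n}=\infty$ for $\mu$-a.e.\ $(x,y)$; since $\mu$ and $\lambda$ are equivalent, the same holds Lebesgue-a.e., which is the assertion. (Feeding this back into the previous subsection then gives that $\Phi(\sigma,\tau_0,\tau_1)$ is singular for such a map.)

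The ergodic-theoretic machinery here is routine. The genuinely delicate point---and the reason the statement is only conditional---is everything bundled into the ergodicity hypothesis: one needs an absolutely continuous, finite invariant measure $\mu$; one needs $\mu$ equivalent to $\lambda$ so the almost-everywhere conclusion transfers to Lebesgue measure; and one needs a lower bound on the invariant density to pass from the Lebesgue areas $\lambda(\triangle_k)$ appearing in the hypothesis to the masses $\mu(\triangle_k)$ appearing in $\int a\,d\mu$. Establishing these density properties for the five relevant classes is where the real work lies, and is precisely why those classes remain conjectural.
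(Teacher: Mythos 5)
Your proposal is correct and follows essentially the same route as the paper: recognize $s_n/n$ as a Birkhoff average of the first-digit function, use the area hypothesis together with a lower bound on the invariant density to show that function is not $\mu$-integrable, and conclude divergence. The only difference is cosmetic but welcome: the paper applies Birkhoff to each indicator $f_k$ of $\triangle_k$ and then asserts $\lim_n s_n/n=\sum_k k\,\mu(\triangle_k)$ directly, whereas your truncation to $a_M=\min(a,M)$ rigorously justifies that interchange of limit and infinite sum (and you are also more explicit than the paper about needing the density bounded below and about transferring the $\mu$-a.e.\ conclusion to Lebesgue measure).
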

\begin{proof}
For ease of notation, suppress the triple $(\sigma, \tau_0, \tau_1)$.  
First we define $f_k$ as the characteristic function of $\triangle_k$:
\[f_k(x)=\begin{cases}1&x\in\triangle_k\\0&x\notin\triangle_k\end{cases}\]
Then using the fact that every ergodic multiplicative triangle map has an intrinsic invariant measure $\mu$, by Birkhoff's Ergodic Theorem we have that
\[\lim_{n\to\infty}\frac{1}{n}\sum_{i=1}^nf_k(T^i(x))=\int_\triangle f_k(x)d\mu=\mu (\triangle_k )\]
for almost all $x\in\triangle$.

Using the notation $P(k)=\mu(\triangle_k)$, this says that for almost all $x\in\triangle$, $a_i=k$ on average $P(k)$ of the time.  Then we have
\[\lim_{n\to\infty}\frac{s_n}{n}=\sum_{k=1}^\infty kP(k)\]
almost everywhere. Because the intrinsic measure of the multiplicative triangle partition map is absolutely continuous with respect to the Lebesgue measure, we have that $P(k)>C\frac{1}{(k+1)(k+2)}$ for some constant $C$.  Using the integral test, we have that
\[\int xP(x)dx\geq\int\frac{Cx}{(x+1)(x+2)}dx=C[\log\left(\frac{(x+2)^2}{x+1}\right)+c_0]\]
which diverges on $[1,\infty)$.  Thus we get
\[\lim_{n\to\infty}\frac{s_n}{n}=\infty\]
almost everywhere.
\end{proof}
Either by direct calculation, or by looking in \cite{amburg}, 
 we see that the  classes  
\begin{align*}
&(e,13,e)&&(e,13,23)&&(e,13,132)&&(e,23,e)&&(e,123,132)
\end{align*}
contain a map that satisfies the  condition that the area of $\triangle_k (\sigma, \tau_0, \tau_1)$ is $\frac{1}{(k+1)(k+2)}$.

In the case of $(e,13,e)$ and $(e,23,e)$ the map that satisfies the area requirement on the $\triangle_k (\sigma, \tau_0, \tau_1)$ is the class representative itself.  For the class $(e,13,23)$ this map is $(132,12,123)$.  For the class $(e,13,132)$ this map is $(13,23,123)$.  For the class $(e,123,132)$ this map is $(13,23,13)$.  Then if we can show these maps are ergodic we will get these five classes as well. 

In current work of Amburg and Jensen \cite{ergodic}, they believe that  $(e,23,e)$ and $(132,12,123)$ are ergodic, which would give us  that the following 24 maps give rise to singular $\Phi(\sigma, \tau_0, \tau_1)$:
\begin{align*}
&(e,23,e)&&(12,123,12)&&(13,132,13)&&(23,e,23)&&(123,13,132)&&(132,12,123)\\
&(13,12,132)&&(123,e,13)&&(e,123,23)&&(132,132,12)&&(12,23,123)&&(23,13,e)\\
&(e,13,23)&&(12,123,132)&&(13,e,123)&&(23,132,e)&&(123,12,13)&&(132,23,12)\\
&(13,132,123)&&(123,13,23)&&(e,23,12)&&(132,12,13)&&(12,123,e)&&(23,e,132)\\
\end{align*}   

Note also, that   Messaoudi,  Nogueira, and  Schweiger \cite{SchweigerF08} have shown earlier that $T^G(e,e,e)$ is ergodic, and hence we have, to a slight extent, another way of showing that $\Phi(e,e,e)$ is singular. 

\subsection{A Special Case: M\"{o}nkemeyer-Type Maps}\label{Monkmayer}

As we mentioned in the introduction, Panti's generalization of the question mark function comes from his recognizing that $?(x)$ is completely characterized by the fact that it is the unique homomorphism that conjugates the Farey map with the Tent map.  Thus, given that the $n$-dimensional M\"{o}nkemeyer map is the generalization of the Farey map, it's only natural that his function is the unique homorphism that conjugates the $n$-dimensional M\"{o}nkemeyer map with the $n$-dimensional Tent map.

In \cite{panti}, Panti explicitly shows that his function is singular under the traditional measure theoretic definition of singularity.  From \cite{tripmaps} we know that the M\"{o}nkemeyer map in 2-dimensions corresponds to the permutation triple $(e,23,132)$.  Then by a combination of Corollary \ref{permute} and Lemma \ref{monk1} we have that the following permutations give rise to a singular function:

\begin{align*}
&(e,23,23)&&(e,23,132)&&(e,132,23)&&(e,132,132)\\
&(13,23,23)&&(13,23,132)&&(13,132,23)&&(13,132,132)\\
&(12,13,13)&&(12,13,123)&&(12,123,13)&&(12,123,123)\\
&(123,13,13)&&(123,13,123)&&(123,123,13)&&(123,123,123)\\
&(23,e,e)&&(23,e,12)&&(23,12,e)&&(23,12,12)\\
&(132,e,e)&&(132,e,12)&&(132,12,e)&&(132,12,12)
\end{align*}

What's nice about the M\"{o}nkemyer-type maps is that every triangle sequence corresponds to a unique point. This allows us to construct a bijection between the points of $\triangle$ and the points of $\{0,1\}^\N$ modulo an equivalence relation.  This is what allows Panti to set up the various conjugations that he uses to both define $\Phi$ and prove its singularity.  Unfortunately, this is not the case for all TRIP maps, forcing us  to our alternative approaches.

\section{Conclusion}

There are many questions left.  The next immediate step would be to say something about the 60 triangle partition maps that fall into one of the five classes
\begin{align*}
&(e,e,13)&&(e,e,23)&&(e,e,132)&&(e,12,23)&&(e,12,132).
\end{align*}

 Also, in the introduction, we stated that the family of triangle partition maps include most know multi-dimensional continued fraction algorithms.  Our 216 triangles maps are the generators of the family, but do not themselves capture most multi-dimensional continued fractions.  As explained in \cite{tripmaps}, we need to look at combination triangle partition maps, which are various combinations of the 216 maps.  Each of these combination TRIP maps should have an associated Minkowski question mark function.  We strongly suspect that the techniques of this paper could be used to find analogous results for combination TRIP maps that are made up of TRIP maps whose question mark function is known to be singular.  More interesting are those combination TRIP maps that are made up of the TRIP maps that we do know know about.
 
 Over the last century there as been a lot of work on the traditional Minkowski question mark function.  For example, see the bibliography at http://uosis.mif.vu.lt/~alkauskas/minkowski.htm
 prepared by Giedrius Alkauskas.  The topics of most of these papers suggest natural question for multi-dimensional continued fraction algorithms.  Hence this current paper should only be viewed as the beginning of work.

\end{document}